\theoremstyle{plain}
\newtheorem*{thm*}{Theorem}
\newtheorem{thm}{Theorem}[section]
\newtheorem{prop}[thm]{Proposition}
\newtheorem{lem}[thm]{Lemma}
\newtheorem{cor}[thm]{Corollary}
\theoremstyle{definition}
\newtheorem{dfn}[thm]{Definition}
\newtheorem{ex}[thm]{Example}
\newtheorem{rem}[thm]{Remark}
\newtheorem{ntt}[thm]{Notation}
\numberwithin{equation}{thm}
\newcommand{\beq}{\begin{equation}}
\newcommand{\eeq}{\end{equation}}
\newcommand{\ZZ}{\mathbb{Z}}
\newcommand{\NN}{\mathbb{N}}
\newcommand{\CC}{\mathbb{C}}
\newcommand{\kk}{\Bbbk}
\newcommand{\Aa}{\mathbb{A}}
\DeclareMathOperator{\ad}{ad}
\newcommand{\mf}{\mathfrak}
\newcommand{\nonzero}{\setminus \{0\}}
\DeclareMathOperator{\Ann}{Ann}
\DeclareMathOperator{\id}{id}
\DeclareMathOperator{\Hom}{Hom}
\DeclareMathOperator{\Aut}{Aut}
\DeclareMathOperator{\spn}{span}
\DeclareMathOperator{\gr}{gr}
\DeclareMathOperator{\Der}{Der}
\DeclareMathOperator{\Ext}{Ext}
\DeclareMathOperator{\codim}{codim}
\DeclareMathOperator{\Inn}{Inn}
\DeclareMathOperator{\LT}{LT}
\DeclareMathOperator{\rad}{rad}
\DeclareMathOperator{\ab}{ab}
\DeclareMathOperator{\ord}{ord}
\newcommand{\W}{W_{\geq -1}}
\newcommand{\del}{\partial}
\newcommand{\diff}[1]{\frac{d}{d#1}}
\newcommand\restr[2]{{
  \left.\kern-\nulldelimiterspace % automatically resize the bar with \right
  #1 % the function
  \vphantom{\big|}
  \right|_{#2}
  }}
\title{Derivations, extensions, and rigidity of subalgebras of the Witt algebra}
\author{Lucas Buzaglo}
\keywords{Subalgebras of the Witt algebra, submodule-subalgebra, derivation, one-dimensional extension, Lie algebra cohomology}
\subjclass[2020]{17B40, 17B56, 17B66 (Primary), 17B65, 17B68 (Secondary)}
\begin{document}

\maketitle

\begin{abstract}
    Let $\kk$ be an algebraically closed field of characteristic 0. We study some cohomological properties of Lie subalgebras of the Witt algebra $W = \Der(\kk[t,t^{-1}])$ and the one-sided Witt algebra $\W = \Der(\kk[t])$. In the first part of the paper, we consider finite codimension subalgebras of $\W$. We compute derivations and one-dimensional extensions of such subalgebras. These correspond to $\Ext_{U(L)}^1(M,L)$, where $L$ is a subalgebra of $\W$ and $M$ is a one-dimensional representation of $L$. We find that these subalgebras exhibit a kind of rigidity: their derivations and extensions are controlled by the full one-sided Witt algebra. As an application of these computations, we prove that any isomorphism between finite codimension subalgebras of $\W$ extends to an automorphism of $\W$.

    The second part of the paper is devoted to explaining the observed rigidity. We define a notion of ``completely non-split extension" and prove that $\W$ is the universal completely non-split extension of any of its subalgebras of finite codimension. In some sense, this means that even when studying subalgebras of $\W$ as abstract Lie algebras, they remember that they are contained in $\W$. We also consider subalgebras of infinite codimension, explaining the similarities and differences between the finite and infinite codimension situations.

    Almost all of the results above are also true for subalgebras of the Witt algebra. We summarise results for $W$ at the end of the paper.
\end{abstract}

\section*{Introduction}

Throughout, we let $\kk$ be an algebraically closed field of characteristic 0. All vector spaces are $\kk$-vector spaces.

Since its inception in the late 1960s \cite{GelfandFuks, GelfandFuks2}, the study of cohomology of infinite-dimensional Lie algebras has had a rich history. The main focus has been on the Witt algebra and related Lie algebras \cite{Goncharova, FeiginFuks, DjokovicZhao}. Another class of Lie algebras that has been of particular interest in this context are \emph{Krichever--Novikov algebras} \cite{Millionshchikov, Wagemann, Wagemann2} (for the definition and general theory of Krichever--Novikov algebras, see \cite{Schlichenmaier}). Other examples can be found in \cite{FeiginLoktev, FialowskiMillionschikov} and in Fuks's book on the cohomology of infinite-dimensional Lie algebras \cite{Fuks}. The computation of these cohomology groups is an interesting and challenging problem with connections to various areas of mathematics and physics, including representation theory and deformation theory.

In this paper, we compute the first cohomology of some infinite-dimensional Lie algebras with coefficients in the adjoint representation, which is closely related to computing derivations of these Lie algebras: for a Lie algebra $L$, we have
$$H^1(L;L) \cong \Der(L)/\Inn(L),$$
where $\Der(L)$ is the space of derivations of $L$ and $\Inn(L)$ is the space of inner derivations of $L$. The study of derivations of Lie algebras dates back to the pioneering works of Jacobson, Hochschild, Leger, Dixmier, and others \cite{Jacobson, Hochschild, Leger, DixmierLister}, and continues to be an active area of research to this day (see, for example, \cite{GaoJiangPei, FeldvossSicilianoWeigel, MoritaSakasaiSuzuki, SaeediSheikh-Mohseni}).

Let $\W = \Der(\kk[t])$ be the \emph{one-sided Witt algebra} and let $W = \Der(\kk[t,t^{-1}])$ be the \emph{Witt algebra} (or \emph{centerless Virasoro algebra}). These are infinite-dimensional Lie algebras which are naturally modules over $\kk[t]$ and $\kk[t,t^{-1}]$, respectively. We are interested in infinite-dimensional subalgebras of $\W$ and $W$. Of particular interest are those subalgebras of $\W$ and $W$ which are also $\kk[t]$- or $\kk[t,t^{-1}]$-submodules, which we call \emph{submodule-subalgebras}. They are denoted by $\W(f) \coloneqq f\W$ and $W(g) \coloneqq gW$ for $f \in \kk[t]$ and $g \in \kk[t,t^{-1}]$. All subalgebras of $\W$ of finite codimension are ``essentially" submodule-subalgebras: if $L$ is such a subalgebra, then there exists $f \in \kk[t]$ such that $\W(f) \subseteq L \subseteq \W(\rad(f))$, where $\rad(f) = \prod_{\xi \in V(f)} (t - \xi)$ \cite[Proposition 3.2.7]{PetukhovSierra}. Furthermore, by \cite{BellBuzaglo}, any infinite-dimensional subalgebra of $\W$ is isomorphic to a subalgebra of finite codimension. We can therefore extract a lot of information about infinite-dimensional subalgebras of $\W$ by first studying submodule-subalgebras.

Of the submodule-subalgebras of $\W$, those of the form $W_{\geq n} \coloneqq \W(t^{n+1})$ for $n \in \NN$ are of particular significance and have been extensively studied. For example, $W_{\geq 1}$ is sometimes called the \emph{positive Witt algebra}, which appeared in \cite{Fialowski} in the context of deformation theory, where the cohomology group $H^2(W_{\geq 1};W_{\geq 1})$ was computed. Similar results for the Lie algebras $W_{\geq 2}$ and $W_{\geq 3}$ were established in \cite{FialowskiPost} and \cite{Kochetkov}.

Submodule-subalgebras of $\W$ and $W$ form a large and interesting class of infinite-dimensional Lie algebras of linear growth. More generally, one could study submodule-subalgebras of $\Der(A)$, where $A$ is an associative $\kk$-algebra. These first appeared in \cite{Grabowski}, and subsequently in \cite{Grabowski2} and \cite{Siebert}. In particular, Siebert showed that if $L_1$ and $L_2$ are submodule-subalgebras of $\Der(\kk[X_1])$ and $\Der(\kk[X_2])$, respectively, where $X_1$ and $X_2$ are irreducible affine varieties, then $X_1$ and $X_2$ are isomorphic if $L_1$ and $L_2$ are isomorphic, provided $L_1$ and $L_2$ satisfy certain additional properties \cite[Theorem 3]{Siebert}.

We start off by computing derivations of infinite-dimensional subalgebras of $\W$.

\begin{thm}[Theorems \ref{thm:derivations of W(f)} and \ref{thm:derivations of general subalgebras}]\label{thm:intro derivations}
    Let $L$ be an infinite-dimensional subalgebra of $\W$. Then
    $$\Der(L) = \{\ad_w \mid w \in \W \text{ such that } \ad_w(L) \subseteq L\}.$$
    In other words, derivations of $L$ are restrictions of derivations of $\W$. Consequently, $H^1(L;L) \cong N_{\W}(L)/L$, where $N_{\W}(L)$ is the normaliser of $L$ in $\W$.

    In particular,
    $$\Der(\W(f)) = \{\ad_w \mid w \in \W(\rad(f))\} \cong \W(\rad(f))$$
    for $f \in \kk[t]$, where $\rad(f) = \prod_{\xi \in V(f)}(t - \xi)$. As a result,
    $$\dim H^1(\W(f);\W(f)) = \deg(f) - \deg(\rad(f)).$$
\end{thm}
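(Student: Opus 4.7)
My plan is to split the theorem into two parts: first establish it for the submodule-subalgebras $L = \W(f)$, then bootstrap to arbitrary infinite-dimensional subalgebras via the sandwich $\W(f) \subseteq L \subseteq \W(\rad(f))$ recalled from Petukhov--Sierra. The cohomology formula $H^1(L;L) \cong N_\W(L)/L$ will then follow from the main statement combined with $\Inn(L) \cong L$, which reduces to $Z(L) = 0$---an easy verification since nonzero elements of $\W(f) \subseteq L$ have nontrivial brackets with enough of $L$. As a first ingredient I would compute the normaliser: for $w = p\partial$ and $fg\partial \in \W(f)$,
$$[p\partial, fg\partial] = (pf'g + f(pg' - gp'))\partial,$$
which lies in $\W(f)$ for every $g \in \kk[t]$ iff $f \mid pf'$, equivalently $\rad(f) \mid p$. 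Hence $N_\W(\W(f)) = \W(\rad(f))$, giving the easy inclusion $\ad(\W(\rad(f))) \subseteq \Der(\W(f))$.

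For the hard direction in the $\W(f)$ case, my strategy is to show that every $\delta \in \Der(\W(f))$ extends to a derivation $\tilde\delta$ of $\W$. Once this is done, the classical fact $\Der(\W) = \ad(\W)$ gives $\tilde\delta = \ad_w$ for some $w \in \W$, and then $\delta(\W(f)) \subseteq \W(f)$ forces $w \in N_\W(\W(f)) = \W(\rad(f))$, with uniqueness following from $Z_\W(\W(f)) = 0$. To build the extension, I would fix a finite-dimensional complement $V$ to $\W(f)$ in $\W$ and, for each $v \in V$, solve for $\tilde\delta(v)$ by imposing the Leibniz relation $\delta([u,v]) = [\delta(u), v] + [u, \tilde\delta(v)]$ as $u$ ranges over $\W(f)$. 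This is the main technical obstacle: consistency of these equations across different $u$ is not automatic. I expect it to follow because the adjoint action of $\W(f)$ on $\W$ has trivial kernel, so each $\tilde\delta(v)$ is pinned down uniquely; compatibility of the various constraints is then forced by $\delta$ being a derivation on $\W(f)$.

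For the general case, the key step is to show that $\W(f)$ is a characteristic ideal of $L$---preserved by every derivation---via some intrinsic description, for instance as $L \cap \ker \rho$ for a suitable finite-dimensional representation $\rho$ of $L$, or via a centraliser/ideal-theoretic characterisation internal to $L$. Granting this, any $\delta \in \Der(L)$ restricts to a derivation of $\W(f)$, which by the first part equals $\ad_w|_{\W(f)}$ for some $w \in \W(\rad(f))$. Subtracting $\ad_w|_L$ reduces to the case $\delta|_{\W(f)} = 0$, and a finite-dimensional argument on the quotient $L/\W(f)$ should then force $\delta = \ad_{w'}|_L$ for some $w' \in N_\W(L)$. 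Specialising to $L = \W(f)$ yields $\dim H^1(\W(f); \W(f)) = \dim \W(\rad(f))/\W(f) = \deg(f) - \deg(\rad(f))$, completing the proof.
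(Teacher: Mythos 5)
Your easy inclusion and the normaliser computation $N_{\W}(\W(f)) = \W(\rad(f))$ are fine, as is the reduction of the $H^1$ formula to $Z(L)=0$. The problem is the step you yourself flag as the main obstacle: extending $\delta \in \Der(\W(f))$ to a derivation of $\W$. First, a technical point: for $u \in \W(f)$ and $v \in \W$ the bracket $[u,v]$ need not lie in $\W(f)$ (which is not an ideal of $\W$), so your defining relation $\delta([u,v]) = [\delta(u),v] + [u,\tilde\delta(v)]$ only makes sense after restricting $u$ to, say, $\W(f^2)$. More seriously, your claim that consistency of this linear system is ``forced by $\delta$ being a derivation on $\W(f)$'' is circular. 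What the derivation property of $\delta$ actually gives is that $D_v \colon u \mapsto \delta([u,v]) - [\delta(u),v]$ is an element of $\Der(\W(f^2),\W)$, i.e.\ a cocycle; solvability of your system for $\tilde\delta(v)$ is the statement that $D_v$ is \emph{inner}, i.e.\ a coboundary with value in $\W$. The gap between these two statements is precisely the cohomology whose computation is the substance of the theorem, and the triviality of the centraliser of $\W(f)$ in $\W$ gives only uniqueness of a solution, never existence. (Indeed, the most one can expect from soft arguments is that $\delta = \ad_{g\del}$ with $g \in \tfrac{1}{f}\kk[t] \subseteq \kk(t)$; a separate order-of-vanishing analysis at the roots of $f$ is then required to force $g \in \rad(f)\kk[t]$.) The paper closes the cocycle-versus-coboundary gap by a mechanism absent from your proposal: a filtration argument assigning to each $d \in \Der(\W(f),W)$ an associated graded derivation of $W_{\geq n}$ (Proposition \ref{prop:associated graded derivation}), combined with an explicit computation of the graded derivations of $W_{\geq n}$ via the bracket relation of Lemma \ref{lem:relations}. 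You would need to supply a substitute for this entire mechanism.

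The passage to general $L$ has two further gaps. The sandwich $\W(f) \subseteq L \subseteq \W(\rad(f))$ of Petukhov--Sierra exists only when $L$ has \emph{finite} codimension; the theorem as stated covers arbitrary infinite-dimensional subalgebras, and the infinite-codimension case requires the classification $L(f,g) \subseteq L \subseteq L(f) \cong \W(h)$ of \cite{BellBuzaglo} together with the fact that $h$ is reduced (Proposition \ref{prop:h_f is reduced}) in order to transport the normalising element back through the isomorphism $L(f) \cong \W(h)$. And even in finite codimension, you give no proof that $\W(f)$ is a characteristic ideal of $L$: kernels of finite-dimensional representations are ideals but not automatically derivation-invariant, and ``a suitable centraliser-theoretic characterisation'' is a placeholder for an argument. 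The paper sidesteps this entirely by viewing a derivation $d$ of $L$ as a one-dimensional extension $L \subseteq L \oplus \kk d$ and invoking Proposition \ref{prop:extensions are in W}, which embeds any such extension back into $\W$.
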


For a Lie algebra $L$, we have $H^1(L;L) = \Ext_{U(L)}^1(\kk,L)$, where $\kk$ is the one-dimensional trivial representation of $L$. We can generalise Theorem \ref{thm:intro derivations} by computing
$$\Ext_{U(\W(f))}^1(M,\W(f)),$$
where $f \in \kk[t]$ and $M$ is an arbitrary one-dimensional representation of $\W(f)$. An element of $\Ext_{U(L)}^1(M,L)$ can be viewed as a non-split short exact sequence of $L$-representations
$$0 \to L \to \overline{L} \to M \to 0.$$
We study such short exact sequences in Section \ref{sec:extensions} for $L$ a subalgebra of $\W$ of finite codimension. In particular, we prove that $\overline{L}$ can be canonically embedded in $\W$ (see Proposition \ref{prop:extensions are in W}).

The above results suggest that subalgebras of $\W$ of finite codimension are ``rigid", in the sense that they inherit their properties from $\W$. We explain this by showing that $\W$ can be intrinsically built from any of its subalgebras of finite codimension as the universal \emph{completely non-split extension}, which is an abstract Lie-algebraic property (see Definition \ref{dfn:completely non-split extension}).

\begin{thm}[Theorem \ref{thm:universal property}]\label{thm:intro universal}
    Let $L$ be a subalgebra of $\W$ of finite codimension. Then $\W$ is the universal completely non-split extension of $L$, in the following sense: $\W$ is a completely non-split extension of $L$, and if $\overline{L}$ is another completely non-split extension of $L$, then $\overline{L}$ can be uniquely embedded in $\W$ such that the diagram
    \begin{center}
        \begin{tikzcd}
            L \arrow[d, hook] \arrow[r, hook] & \W \\
            \overline{L} \arrow[ru, hook]     &   
        \end{tikzcd}
    \end{center}
    commutes.
\end{thm}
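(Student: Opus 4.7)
The plan is to prove the statement in two parts: first, verify that $\W$ is itself a completely non-split extension of $L$; second, establish the universal property by embedding any other such extension $\overline{L}$ into $\W$.

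For the first part, I would use the structure of finite codimension subalgebras of $\W$ recalled in the introduction (sandwiched between $\W(f)$ and $\W(\rad(f))$) to produce a chain $L = L_0 \subset L_1 \subset \cdots \subset L_n = \W$ with each $L_i$ an ideal of codimension one in $L_{i+1}$. I would then check that this chain exhibits $\W$ as a completely non-split extension, leaning on Theorem \ref{thm:intro derivations}: at each step, the extension class of $L_{i+1}/L_i$ must be non-zero, since a splitting would produce a decomposition $L_{i+1} \cong L_i \oplus \kk$ inside $\W$ and contradict the fact that derivations of $L_i$ are realised by the normaliser $N_{\W}(L_i)/L_i$ with every element acting non-trivially modulo $L_i$.

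For the universal property, let $\overline{L}$ be another completely non-split extension of $L$. I would construct the embedding $\overline{L} \hookrightarrow \W$ by building it up one dimension at a time. Starting from the given inclusion $L \hookrightarrow \W$, I successively choose subalgebras $L = \overline{L}_0 \subset \overline{L}_1 \subset \cdots$ of $\overline{L}$ with each step of codimension one, and extend the embedding at each stage using Proposition \ref{prop:extensions are in W}, which asserts that every one-dimensional extension of a finite codimension subalgebra of $\W$ embeds canonically in $\W$. If $\overline{L}$ happens to have infinite codimension over $L$, the construction is completed by a Zorn's lemma argument on partial embeddings extending $L \hookrightarrow \W$.

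Uniqueness — and, by the same token, the well-definedness of the inductive step — is where the completely non-split hypothesis does the real work, and where I expect the main obstacle to lie. Any two candidate lifts of an extended subalgebra differ by an element of $L$ at each stage, potentially producing two different one-dimensional extensions of the previous stage inside $\W$; the condition from Definition \ref{dfn:completely non-split extension} should rigidify the extension class so that only one such lift is compatible with the ambient structure, collapsing the ambiguity. Unpacking the definition of completely non-split extension into this explicit rigidity statement, and then checking that it propagates through the inductive construction to yield both existence and uniqueness of a globally compatible embedding, is the technical heart of the argument.
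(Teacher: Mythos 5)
Your overall architecture for the existence of the embedding (induct one dimension at a time via Proposition \ref{prop:extensions are in W}) is the paper's, but several of your supporting steps are either wrong or missing the key input. First, the chain $L = L_0 \subset \cdots \subset L_n = \W$ with each $L_i$ an \emph{ideal} of codimension one in $L_{i+1}$ does not exist in general: for $L = W_{\geq 1}$ the only intermediate subalgebra is $W_{\geq 0}$, and $W_{\geq 0}$ is not an ideal of $\W$ since $[e_{-1},e_0] = e_{-1} \notin W_{\geq 0}$. More importantly, the mere existence of \emph{some} chain of codimension-one subalgebras from $L$ up to $\W$ is not something you can read off from the sandwich $\W(f) \subseteq L \subseteq \W(\rad(f))$; it is the content of Proposition \ref{prop:one-dim extensions exist}, whose proof requires showing that $\W/L$ has a one-dimensional $U(L)$-submodule. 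That in turn rests on Lemma \ref{lem:L/I is solvable} (finite-dimensional quotients of $L$ are solvable) and Lie's theorem, via Corollary \ref{cor:irreps are one-dim}. The same corollary is needed in your second part to know that the irreducible quotients $L_{i+1}/L_i$ appearing in Definition \ref{dfn:completely non-split extension} are one-dimensional, so that Proposition \ref{prop:extensions are in W} applies at each stage; you never invoke it. The Zorn's lemma contingency is vacuous: the chain in Definition \ref{dfn:completely non-split extension} is finite, so $\dim(\overline{L}/L) < \infty$ automatically.

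Second, you leave uniqueness as an expectation rather than an argument, and you have misidentified where the work lies: the completely non-split hypothesis is what makes \emph{existence} of the lift go through (it is the non-splitness hypothesis of Proposition \ref{prop:extensions are in W}), whereas uniqueness is settled by entirely different means. The paper takes two embeddings $\varphi, \psi \colon \overline{L} \to \W$ agreeing on $L$, observes that $\psi \circ \varphi^{-1}$ is an isomorphism between finite codimension subalgebras of $\W$ fixing $L$ pointwise, and applies Theorem \ref{thm:extending isomorphisms} together with Rudakov's description $\Aut(\W) = \{\rho_{x;\alpha}\}$ (Proposition \ref{prop:Rudakov}) to force $\rho_{x;\alpha} = \id$. (Alternatively one can propagate the uniqueness clause of Proposition \ref{prop:extensions are in W} up the chain.) Either way, a concrete mechanism is needed; ``unpacking the definition into an explicit rigidity statement'' is not one.
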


Given Siebert's result, the rigidity noted above, and the close relationship between subalgebras of $\W$ of finite codimension and submodule-subalgebras, it is natural to ask whether any isomorphism between subalgebras of $\W$ of finite codimension extends to an automorphism of $\W$. If we knew of this universal property in advance, this would be immediate. In fact, we do prove this on the way to proving the universal property.

\begin{thm}[Theorem \ref{thm:extending isomorphisms}]\label{thm:intro isomorphism}
    Let $L_1$ and $L_2$ be subalgebras of $\W$ of finite codimension and suppose there is an isomorphism of Lie algebras $\varphi \colon L_1 \to L_2$. Then $\varphi$ extends to an automorphism of $\W$.
\end{thm}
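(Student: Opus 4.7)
The plan is to proceed by induction on $n = \codim_\W L_1$, extending $\varphi$ one dimension at a time using Proposition \ref{prop:extensions are in W}.

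In the base case $n = 0$, we have $L_1 = \W$, and we must show $L_2 = \W$. Since $\W$ is simple, it is perfect, and so is $L_2 \cong \W$. On the other hand, a direct computation shows $[\W(h), \W(h)] = \W(h^2)$ for every $h \in \kk[t]$, so any proper finite-codimension subalgebra $L \subseteq \W$ (contained in $\W(\rad(f))$ for some non-constant $f$ by \cite[Proposition 3.2.7]{PetukhovSierra}) has derived series $L^{(k)} \subseteq \W(\rad(f)^{2^k})$, whose intersection is zero. Hence no such $L$ is perfect, forcing $L_2 = \W$ and making $\varphi$ an automorphism of $\W$.

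For the inductive step $n \geq 1$, the first task is to find a subalgebra $L_1 \subsetneq L_1' \subseteq \W$ with $\dim(L_1'/L_1) = 1$. By \cite[Proposition 3.2.7]{PetukhovSierra}, $\W(f) \subseteq L_1 \subseteq \W(\rad(f))$ for some non-constant $f \in \kk[t]$. If $L_1 \subsetneq \W(\rad(f))$, then $L_1/\W(f)$ is a proper subalgebra of the finite-dimensional nilpotent Lie algebra $\W(\rad(f))/\W(f)$, and by the normalizer condition it is contained in a codimension-one subalgebra, which lifts to $L_1'$. Otherwise $L_1 = \W(\rad(f))$, and we may take $L_1' = \W(\rad(f)/g)$ for any linear factor $g$ of $\rad(f)$.

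The short exact sequence $0 \to L_1 \to L_1' \to \kk \to 0$ of $L_1$-modules, transported via $\varphi$, defines a one-dimensional extension of $L_2$. By Proposition \ref{prop:extensions are in W}, this abstract extension canonically embeds in $\W$, yielding $L_2 \subsetneq L_2' \subseteq \W$ with $\dim(L_2'/L_2) = 1$ and an isomorphism $\varphi' \colon L_1' \to L_2'$ extending $\varphi$. Applying the inductive hypothesis to $\varphi'$ (which has $\codim_\W L_1' = n-1$) produces an automorphism of $\W$ extending $\varphi'$, and hence $\varphi$. The main obstacle is the construction of $L_1'$, which relies on the structural description of finite-codimension subalgebras of $\W$; the remainder of the induction is essentially bookkeeping around Proposition \ref{prop:extensions are in W}.
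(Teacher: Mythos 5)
Your overall strategy --- induction on $\codim_{\W}(L_1)$, producing a one-dimensional extension $L_1'$ of $L_1$ inside $\W$, transporting it through $\varphi$, and invoking Proposition \ref{prop:extensions are in W} to re-embed the result in $\W$ --- is exactly the strategy of the paper, and your base case (no proper finite-codimension subalgebra is perfect, since its derived series is trapped in $\W(\rad(f)^{2^k})$) is essentially the content of Lemma \ref{lem:L/I is solvable}. However, your construction of $L_1'$ contains a genuine error: $\W(\rad(f))/\W(f)$ is \emph{not} nilpotent in general, so the normalizer condition is not available. For instance, $\W(t)/\W(t^3)$ is spanned by the images of $e_0$ and $e_1$ with $[e_0,e_1]=e_1$; this is the two-dimensional non-abelian Lie algebra, which is solvable but not nilpotent (by Remark \ref{rem:residually nilpotent}, the lower central series of $\W(\rad(f))$ stabilises at $\W(\rad(f)^2)$, which lies in $\W(f)$ only when $f$ divides $\rad(f)^2$). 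The subalgebra $\kk e_0$ of this quotient is self-normalising, so the normalizer argument genuinely fails.

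The step can be repaired: the quotient $\W(\rad(f))/\W(f)$ is solvable, and over an algebraically closed field of characteristic zero every maximal subalgebra of a finite-dimensional solvable Lie algebra has codimension one (a consequence of Lie's theorem applied to a minimal ideal not contained in the maximal subalgebra), whence every proper subalgebra is contained in a subalgebra of exactly one more dimension. This is in effect what the paper does, in different packaging: Lemma \ref{lem:L/I is solvable} and Lie's theorem show that every finite-dimensional irreducible representation of $L_1$ is one-dimensional (Corollary \ref{cor:irreps are one-dim}), so the $U(L_1)$-module $\W/L_1$ has a one-dimensional submodule whose preimage is the desired $L_1'$ (Proposition \ref{prop:one-dim extensions exist}). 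Two further small points: the quotient $L_1'/L_1$ need not be the trivial module $\kk$ (it can be a nontrivial one-dimensional representation), and you should record why the sequence $0 \to L_1 \to L_1' \to L_1'/L_1 \to 0$ is non-split --- namely, that $\W$ has no one-dimensional $U(L_1)$-submodules --- since Proposition \ref{prop:extensions are in W} applies only to non-split extensions.
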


The idea of the proof of Theorem \ref{thm:intro isomorphism} is as follows: let $\varphi \colon L_1 \to L_2$ be an isomorphism between proper subalgebras of $\W$ of finite codimension. We show that there exist subalgebras $\overline{L}_i$ of $\W$ with $L_i \subseteq \overline{L}_i \subseteq \W$ and $\dim(\overline{L}_i/L_i) = 1$ for $i = 1,2$ such that $\varphi$ extends to an isomorphism $\overline{\varphi} \colon \overline{L}_1 \to \overline{L}_2$. The proof then follows by induction on $\codim_{\W}(L_1)$.

In order to prove the existence of the isomorphism $\overline{\varphi} \colon \overline{L}_1 \to \overline{L}_2$, we apply our study of non-split short exact sequences of the form
$$0 \to L \to \overline{L} \to \overline{L}/L \to 0$$
where $L$ is a subalgebra of $\W$ of finite codimension and $\dim(\overline{L}/L) = 1$. In other words, Theorem \ref{thm:intro isomorphism} follows from the computation of $\Ext_{U(L)}^1(M,L)$, where $M$ is a one-dimensional representation of $L$.

We now explain the structure and proofs of the paper in more detail. There are two main goals: the first is to compute $\Ext_{U(\W(f))}^1(M,\W(f))$, where $f$ and $M$ are as above, and the second is to provide an explanation for the observed rigidity.

Sections \ref{sec:graded derivations} and \ref{sec:general submodule-subalgebras} are devoted to computing derivations of $\W(f)$ for $f \in \kk[t]$, which corresponds to the case where $M = \kk$ is the trivial representation. This is because
$$\Ext_{U(\W(f))}^1(\kk,\W(f)) = H^1(\W(f);\W(f)) \cong \Der(\W(f))/\Inn(\W(f))$$
is the first cohomology of $\W(f)$ with coefficients in the adjoint representation.

It is easy to show that all derivations of $\W$ are inner. Given the rigidity of the situation, it is natural to ask if derivations of submodule-subalgebras of $\W$ also arise from the adjoint action of elements of $\W$, that is, if
$$\Der(\W(f)) = \{\ad_w \mid w \in \W \text{ and } \ad_w(\W(f)) \subseteq \W(f)\},$$
where $f \in \kk[t]$. It is easy to see that $\ad_w(\W(f)) \subseteq \W(f)$ if and only if $w \in \W(\rad(f))$, where $\rad(f) = \prod \{t - \xi \mid f(\xi) = 0\}$. Therefore, the question is equivalent to asking whether $\Der(\W(f)) = \{\ad_w \mid w \in \W(\rad(f))\}$, which is precisely what Theorem \ref{thm:intro derivations} says.

As the Lie algebra $\W$ is graded, the proof of Theorem \ref{thm:intro derivations} starts by considering derivations of graded submodule-subalgebras, that is, Lie algebras of the form $W_{\geq n}$ for $n \in \NN$. For this situation, we can use results from \cite{Farnsteiner}. In particular, the space of derivations of a graded Lie algebra is itself graded, which greatly simplifies our computations. However, computing derivations of ungraded submodule-subalgebras requires more work. Since submodule-subalgebras of $\W$ are filtered, this is achieved by an associated graded argument, allowing us to compute derivations of ungraded submodule-subalgebras from derivations of graded ones.

In Section \ref{sec:extensions}, we move on to computing $\Ext_{U(\W(f))}^1(M,\W(f))$, where $f \in \kk[t]$ and $M$ is an arbitrary one-dimensional representation of $\W(f)$. A key observation is that the derived subalgebra of $\W(f)$ is $\W(f^2)$, so the action of $\W(f^2)$ on any one-dimensional representation of $\W(f)$ is trivial. Consequently, there are many one-dimensional representations of $\W(f)$, corresponding to the representations of the abelian Lie algebra $\W(f)/\W(f^2)$. In other words, the one-dimensional representations of $\W(f)$ are parametrised by $\Aa^{\deg(f)}$. However, the only one-dimensional modules $M$ which give rise to non-split extensions
$$0 \to \W(f) \to L \to M \to 0$$
are of the form $M = \kk$ or $M = \W/\W(t - \xi)$ for $\xi \in V(f)$ a root of order 1. The $U(\W(f))$-module structure on $\W/\W(t - \xi)$ is induced from $\W(f)$ acting by its adjoint action on both $\W$ and $\W(t - \xi)$, noting that $\W(f)$ is a Lie subalgebra of both $\W$ and $\W(t - \xi)$.

\begin{thm}[Theorem \ref{thm:one-dimensional extension}]\label{thm:intro extensions}
    Let $f \in \kk[t]$. If $M \cong \W/\W(t - \xi)$ for some $\xi \in \kk$ a root of $f$ of multiplicity 1, then
    $$\dim(\Ext_{U(\W(f))}^1(\W/\W(t - \xi),\W(f))) = 1.$$
    In this case, the unique non-split extension of $\W/\W(t - \xi)$ by $\W(f)$ is $\W(\frac{f}{t - \xi})$. If $M \cong \kk$ is trivial, we have
    $$\dim(\Ext_{U(\W(f)}^1(\kk,\W(f))) = \deg(f) - \deg(\rad(f)).$$
    Otherwise, $\Ext_{U(\W(f))}^1(M,\W(f)) = 0$.
\end{thm}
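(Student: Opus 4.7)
The plan is to use Proposition \ref{prop:extensions are in W} to realise any non-split extension $0 \to \W(f) \to \overline L \to M \to 0$ concretely as a codimension-one Lie subalgebra $\overline L \subseteq \W$ containing $\W(f)$, thereby converting the computation of $\Ext^1$ into a classification of such subalgebras. Writing $\overline L = \W(f) \oplus \kk p\partial$ for some $p \in \kk[t]$ with $f \nmid p$, expanding $[fg\partial, p\partial] = (fg p' - (fg)' p)\partial$ and demanding $[\overline L, \overline L] \subseteq \overline L$ translates the subalgebra condition into the requirement that, for every $g \in \kk[t]$, the product $f' p g$ lies in $f\kk[t] + \kk p$.

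I would then split into two cases according to whether $\overline L \subseteq \W(\rad(f))$. In the first case ($\rad(f) \mid p$), the factorisation $f' = (f/\rad(f)) \cdot f''$ with $\gcd(f'', \rad(f)) = 1$ yields $f' p g \in f\kk[t]$ for all $g$, so the adjoint action of $\W(f)$ on $\overline L/\W(f)$ is trivial and $M \cong \kk$. These extensions are counted by
\[
\Ext^1_{U(\W(f))}(\kk, \W(f)) \cong H^1(\W(f); \W(f)) \cong \Der(\W(f))/\Inn(\W(f)),
\]
which by Theorem \ref{thm:intro derivations} has dimension $\deg(f) - \deg(\rad(f))$, giving the count for $M = \kk$.

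The main obstacle is the complementary case $\overline L \not\subseteq \W(\rad(f))$. Reducing $f' p g \in f\kk[t] + \kk p$ modulo $f$, the Chinese Remainder isomorphism $\kk[t]/(f) \cong \prod_i \kk[t]/(t-\xi_i)^{n_i}$ combined with the requirement that multiplication by $\bar{f'} \bar g$ stabilise $\kk \bar p$ for all $\bar g$ forces $\bar p$ to be a simultaneous eigenvector for multiplication by $\bar t$, hence up to scalar a socle generator of a single CRT factor; the sandwich theorem \cite[Proposition 3.2.7]{PetukhovSierra} then pins down $\overline L = \W(f/(t-\xi))$ for some root $\xi$ of $f$, with the condition $\overline L \not\subseteq \W(\rad(f))$ forcing $\xi$ to be simple (otherwise $\W(f/(t-\xi))$ is already contained in $\W(\rad(f))$). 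A direct bracket calculation shows the character of $\W(f)$ on $\overline L/\W(f)$ is $fg\partial \mapsto -f'(\xi) g(\xi)$, which matches the character of $\W/\W(t - \xi)$, so $M \cong \W/\W(t-\xi)$; uniqueness of $\overline L$ yields $\dim \Ext^1 = 1$. Exhaustiveness of the case analysis then forces $\Ext^1_{U(\W(f))}(M, \W(f)) = 0$ for every other one-dimensional module $M$.
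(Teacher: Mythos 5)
Your proposal is correct and follows the paper's overall strategy --- use Proposition \ref{prop:extensions are in W} to realise any non-split extension as a codimension-one subalgebra $\overline{L} = \W(f)\oplus\kk p\del$ of $\W$, dispose of $M\cong\kk$ via Theorem \ref{thm:derivations of W(f)}, and show the remaining case forces $\overline{L}=\W(\frac{f}{t-\xi})$ --- but you execute the key classification step genuinely differently. The paper works locally: it applies Lemma \ref{lem:order of vanishing} to $[f\del,p\del]$ to show $\ord_\xi(f)=1$ at the root where $p$ does not vanish, evaluates the brackets $[t^kf\del,p\del]$ at $\xi$ to extract the character $\lambda_k=-\xi^kf'(\xi)$, and then runs a separate order-of-vanishing argument at each remaining root $\mu$ to get $\ord_\mu(p)\ge\ord_\mu(f)$. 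You instead reduce the subalgebra condition $f'gp\in f\kk[t]+\kk p$ modulo $f$ and argue globally in $\kk[t]/(f)$: from $\overline{f'}\bar p=\lambda_0\bar p$ and $\overline{f'}\,\bar t\,\bar p=\lambda_1\bar p$ you get $\bar t\,\bar p=(\lambda_1/\lambda_0)\bar p$, so $\bar p$ is a socle generator of a single CRT factor and $\overline{L}=\W(\frac{f}{t-\xi})$ in one stroke, with all roots handled simultaneously. The one step you should make explicit is that $\lambda_0\neq 0$, which you need in order to divide by it; this holds precisely because $\rad(f)\nmid p$, since $\frac{f}{\rad(f)}$ divides $f'$ and hence $f\mid f'p$ if and only if $\rad(f)\mid p$. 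Both your argument and the paper's leave implicit the same small point about why uniqueness of the middle term yields $\dim\Ext^1=1$: the embedding of Proposition \ref{prop:extensions are in W} restricts to the identity on $\W(f)$, so any two nonzero classes differ only by the scalar relating the induced identifications $\overline{L}/\W(f)\cong M$, whence $\Ext^1\setminus\{0\}$ is a single $\kk^*$-orbit.
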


Although it is not immediately obvious, the proof is closely related to the computation of derivations of submodule-subalgebras: we consider a short exact sequence of $U(\W(f))$-modules
$$0 \to \W(f) \to L \to M \to 0$$
as a sequence of $U(\W(f^2))$-modules. Since $M$ is trivial as a representation of $\W(f^2)$, the short exact sequence gives an element of
$$\Ext_{U(\W(f^2))}^1(\kk,\W(f)) \cong \Der(\W(f^2),\W(f))/\Inn(\W(f^2),\W(f)),$$
which can be computed using the techniques from Theorem \ref{thm:intro derivations}.

In Section \ref{sec:universal}, we prove Theorem \ref{thm:intro universal}, providing an explanation for the aforementioned rigidity. We first show that $\W$ is a completely non-split extension of any of its subalgebras of finite codimension, which we achieve by proving that any such subalgebra of $\W$ has a one-dimensional extension contained in $\W$. We then prove the universal property using results from Section \ref{sec:extensions} about one-dimensional extensions of subalgebras of $\W$.

We then briefly consider subalgebras of $\W$ of infinite codimension. Any infinite-dimensional subalgebra of $\W$ is isomorphic to a subalgebra of $\W$ of finite codimension \cite{BellBuzaglo}, so our results in finite codimension can be translated to infinite codimension. However, there are also some key differences when it comes to the universal property, which we highlight at the end of Section \ref{sec:universal}.

We conclude the paper by establishing analogous results to Theorems \ref{thm:intro derivations}--\ref{thm:intro extensions} for submodule-subalgebras of the Witt algebra. The proofs for submodule-subalgebras of $\W$ go through for $W$ with the obvious changes. In fact, the situation for the Witt algebra is slightly easier: the associated graded algebra to $W(f)$ is $W$, where $f \in \kk[t,t^{-1}]$. Consequently, in order to use the associated graded argument to compute derivations of $W(f)$, we only need to consider derivations of $W$. Thanks to results from \cite{Farnsteiner}, it is easy to see that $\Der(W) = \Inn(W)$.

\vspace{2mm}

\noindent \textbf{Acknowledgements:} This work was done as part of the author's PhD research at the University of Edinburgh.

Proposition \ref{prop:codimension abelianisation} is part of a collaboration with Jason Bell carried out during the author's visit to the University of Waterloo. We are grateful to Prof. Bell for allowing us to include it.

\section{Derivations of graded submodule-subalgebras}\label{sec:graded derivations}

We begin by recalling the notions of derivations of algebras, and defining the Lie algebras of interest in this paper.

\begin{dfn}
    A \emph{derivation} of a $\kk$-algebra $A$ is a $\kk$-linear map $d \colon A \to A$ such that
    $$d(ab) = ad(b) + d(a)b$$
    for $a,b \in A$. We write $\Der(A)$ for the set of derivations of $A$.
    
    We write $\W = \Der(\kk[t]) = \kk[t]\del$ and $W = \Der(\kk[t,t^{-1}]) = \kk[t,t^{-1}]\del$ for the \emph{one-sided Witt algebra} and the \emph{Witt algebra}, respectively, where $\del = \diff{t}$. The Witt algebra is sometimes called the \emph{centerless Virasoro algebra}.
    
    We let $e_n = t^{n+1}\del \in W$ for $n \in \ZZ$. The Lie bracket is given by
    $$[f\del,g\del] = (fg' - f'g)\del, \quad [e_n,e_m] = (m - n)e_{n+m},$$
    where $f,g \in \kk[t,t^{-1}]$ and $n,m \in \ZZ$. Note that $\W$ is spanned by $\{e_n \mid n \geq -1\}$.
    
    For $f \in \kk[t]$, we write $\W(f) = f\W = f\kk[t]\del$ under the obvious notation. It is clear that $\W(f)$ is a subalgebra of $\W$, as well as a $\kk[t]$-submodule. We call $\W(f)$ a \emph{submodule-subalgebra} of $\W$. We write $W_{\geq n}$ for $\W(t^{n+1})$.

    We will also consider the Lie algebra $\Der(\kk(t)) = \kk(t)\del$, and view $\W$ and $W$ as subalgebras of $\kk(t)\del$.
\end{dfn}

Our main goal in the first part of the paper is to compute $\Ext^1_{U(\W(f))}(M,\W(f))$, where $f \in \kk[t] \nonzero$ and $M$ is a one-dimensional $U(\W(f))$-module. We devote the first two sections to studying the easiest case, where $M = \kk$ is the trivial one-dimensional representation of $\W(f)$. In this case, $\Ext^1_{U(\W(f))}(\kk,\W(f))$ is a well-known object.

\begin{dfn}
    For a Lie algebra $L$ and a representation $M$ of $L$, the \emph{cohomology of $L$ with values in $M$}, denoted $H^n(L;M)$ for $n \in \NN$, is defined as
    $$H^n(L;M) = \Ext^n_{U(L)}(\kk,M).$$
\end{dfn}

We are therefore interested in computing $H^1(\W(f);\W(f))$ for $f \in \kk[t] \nonzero$. It is a standard fact that, for a Lie algebra $L$ and a representation $M$ of $L$, the cohomology group $H^1(L;M)$ is isomorphic to the quotient $\Der(L,M)/\Inn(L,M)$, defined below.

\begin{dfn}
    A \emph{derivation} of a Lie algebra $L$ with values in an $L$-module $M$ is a linear map $d \colon L \to M$ such that
    $$d([x,y]) = x \cdot d(y) - y \cdot d(x)$$
    for $x,y \in L$. We write $\Der(L,M)$ for the set of derivations $L$ with values in $M$, and simply write $\Der(L)$ instead of $\Der(L,L)$. A derivation $d \in \Der(L,M)$ is \emph{inner} if there exists $m \in M$ such that $d(x) = x \cdot m$. We write $\Inn(L,M)$ for the set of inner derivations of $L$ with values in $M$. As before, we write $\Inn(L)$ instead of $\Inn(L,L)$.
    
    If $L = \bigoplus_{n \in \ZZ}L_n$ is a $\ZZ$-graded Lie algebra and $M = \bigoplus_{n \in \ZZ} M_n$ is a $\ZZ$-graded representation of $L$, then for $k \in \ZZ$ we write
    $$\Der(L,M)_k = \{d \in \Der(L,M) \mid d(L_n) \subseteq M_{n+k} \text{ for all } n \in \ZZ\}.$$
    If $d \in \Der(L,M)_k$, we say that $d$ is a \emph{graded derivation of degree $k$}.
\end{dfn}

If $L$ is a Lie algebra and $M$ is a representation of $L$, we can construct elements of $\Ext_{U(L)}^1(\kk,M)$ from derivations of $L$ with values in $M$ as follows: given a derivation $d \in \Der(L,M)$, consider the vector space $X = M \oplus \kk d$ with $L$-action given by the usual action of $L$ on $M$ and $w \cdot d = -d(w)$ for $w \in L$. Note that $X/M$ is trivial as a representation of $L$, so we get a short exact sequence of representations of $L$ given by
\beq\label{eq:derivation gives extension}
    0 \to M \to X \to \kk \to 0.
\eeq
Furthermore, we easily see that \eqref{eq:derivation gives extension} splits if and only if $d \in \Inn(L,M)$.

We therefore see that computing $H^1(\W(f);\W(f)) = \Ext_{U(\W(f))}^1(\kk,\W(f))$ is equivalent to computing derivations of $\W(f)$. Thanks to the following result of Farnsteiner, studying derivations of graded Lie algebras is easier than ungraded ones, since we can exploit the graded structure of the space of derivations.

\begin{prop}[{\cite[Proposition 1.1]{Farnsteiner}}]\label{prop:decomposition of Der}
    If $L = \bigoplus_{n \in \ZZ}L_n$ is a $\ZZ$-graded finitely generated Lie algebra and $M = \bigoplus_{n \in \ZZ} M_n$ is a $\ZZ$-graded representation of $L$, then
    $$\Der(L,M) = \bigoplus_{n \in \ZZ}\Der(L,M)_n.$$
\end{prop}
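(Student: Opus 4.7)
My plan is to construct the graded components of any derivation directly by projection, verify they are themselves graded derivations, and then invoke finite generation to show that only finitely many of them can be nonzero. Let $\pi_m \colon M \to M_m$ denote the projection onto the degree-$m$ summand. For $d \in \Der(L,M)$ and $k \in \ZZ$, I would define $d_k \colon L \to M$ on homogeneous $x \in L_n$ by $d_k(x) = \pi_{n+k}(d(x))$, and extend linearly. By construction $d_k(L_n) \subseteq M_{n+k}$, so $d_k$ is a candidate element of $\Der(L,M)_k$. Pointwise, $d(x) = \sum_k d_k(x)$ for every $x \in L$, since $d(x) \in M$ has a unique finite decomposition into homogeneous pieces.

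The first step is to verify that each $d_k$ is itself a derivation. Both the Lie bracket on $L$ and the action of $L$ on $M$ respect the grading, so for homogeneous $x \in L_n$ and $y \in L_m$, one has $[x,y] \in L_{n+m}$, and applying $\pi_{n+m+k}$ to the Leibniz identity $d([x,y]) = x \cdot d(y) - y \cdot d(x)$ yields
\[ d_k([x,y]) = x \cdot (d(y))_{m+k} - y \cdot (d(x))_{n+k} = x \cdot d_k(y) - y \cdot d_k(x). \]
Extending by bilinearity gives the Leibniz rule for $d_k$ on all of $L$, so $d_k \in \Der(L,M)_k$.

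The main obstacle is ruling out infinite sums, and this is where the finite generation hypothesis is essential. Fix homogeneous generators $x_1, \dots, x_r$ of $L$. Each $d(x_i) \in M = \bigoplus_m M_m$ has only finitely many nonzero homogeneous components, so there is a finite set $K \subseteq \ZZ$ with $d_k(x_i) = 0$ for all $k \notin K$ and all $i$. Since any derivation of $L$ into $M$ is determined by its values on a generating set — two derivations that agree on generators agree on all iterated brackets by the Leibniz rule, hence on all of $L$ — it follows that $d_k = 0$ for $k \notin K$. Therefore $d = \sum_{k \in K} d_k$ is a \emph{finite} sum inside $\Der(L,M)$, which, combined with the obvious injectivity of $\bigoplus_k \Der(L,M)_k \hookrightarrow \Der(L,M)$, establishes the desired direct sum decomposition. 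Without finite generation one can genuinely produce derivations with infinitely many nonzero homogeneous components, so the hypothesis cannot be dropped.
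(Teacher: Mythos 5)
Your argument is correct and is the standard proof of this fact: the paper itself quotes this statement from Farnsteiner without reproducing a proof, and your decomposition of $d$ into the projections $d_k(x) = \pi_{n+k}(d(x))$ for $x \in L_n$, together with the use of finite generation (by homogeneous elements, which one obtains by replacing any finite generating set with the finitely many homogeneous components of its members) to force all but finitely many $d_k$ to vanish, is exactly the intended argument. The only point worth making explicit is that last parenthetical justification that a finitely generated graded Lie algebra admits a finite homogeneous generating set; otherwise nothing is missing.
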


The Lie algebras $W$ and $\W$ are $\ZZ$-graded, with $e_n$ having degree $n$. However, submodule-subalgebras of $\W$ are not graded in general: the graded ones are $W_{\geq n}$ for $n \geq -1$. Still, ungraded submodule-subalgebras are filtered, with associated graded algebras isomorphic to subalgebras of the form $W_{\geq n}$. It is therefore natural to begin our study of derivations of submodule-subalgebras of $\W$ by considering the graded ones, and later attempt to extract as much information as we can for general submodule-subalgebras via an associated graded argument.

The goal for this section is to compute $\Der(W_{\geq n},W)$ for $n \in \NN$, where we view $W$ as a representation of $\W$ under the adjoint action. We start with the cases $n = -1,0,1$. The computation of derivations of $\W$ was already a well-known folklore result, but we still give a proof here. However, derivations of $W_{\geq n}$ for arbitrary $n$ were not known before.

\begin{prop}\label{prop:derivations of W1}
    We have $\Der(\W,W) = \Inn(\W,W)$. Consequently, $\Der(\W) = \Inn(\W) \cong \W$ and $H^1(\W;\W) = 0$.
\end{prop}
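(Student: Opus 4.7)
The plan is to exploit the $\ZZ$-grading. Since $\W$ is finitely generated as a Lie algebra (for example by $e_{-1}$ and $e_2$: repeated brackets with $e_{-1}$ produce scalar multiples of $e_1$ and then $e_0$, while $\ad_{e_1}$ applied repeatedly to $e_2$ produces every $e_n$ with $n \geq 3$), Proposition \ref{prop:decomposition of Der} gives
$$\Der(\W, W) = \bigoplus_{k \in \ZZ} \Der(\W, W)_k.$$
Since each $\ad_{e_k}$ lies in $\Der(\W, W)_k$ and is nonzero, it will suffice to show that every graded piece is one-dimensional.

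Fix $k$ and write an arbitrary degree-$k$ derivation as $d(e_n) = \alpha_n e_{n+k}$ for scalars $\alpha_n \in \kk$. Applying the Leibniz rule to the bracket $[e_0, e_n] = n e_n$ yields the single linear identity $-k\,\alpha_n = (n-k)\,\alpha_0$. When $k \neq 0$, this expresses every $\alpha_n$ in terms of $\alpha_0$, so $\dim \Der(\W, W)_k \leq 1$, and equality holds via $\ad_{e_k}$. When $k = 0$, the identity instead forces $\alpha_0 = 0$, so I would turn to a second bracket: the Leibniz rule applied to $[e_{-1}, e_n] = (n+1) e_{n-1}$ gives the recursion $\alpha_n = \alpha_{n-1} - \alpha_{-1}$ for $n \neq -1$, determining every $\alpha_n$ from $\alpha_{-1}$, and once again $\ad_{e_0}$ spans this one-dimensional space. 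Summing over $k$ yields $\Der(\W, W) = \Inn(\W, W)$.

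The remaining claims follow with little extra work. Any $d \in \Der(\W)$ is in particular a derivation with values in $W$, so by the above $d = \sum_k c_k \ad_{e_k}$ as a finite sum. Evaluating at $e_{-1}$ gives $d(e_{-1}) = \sum_k c_k(-1-k)\,e_{k-1}$, and the requirement $d(e_{-1}) \in \W$ forces $c_k = 0$ for $k \leq -2$, since the coefficient $-1-k$ vanishes only at $k = -1$. Hence $d = \ad_w$ for $w = \sum_{k \geq -1} c_k e_k \in \W$, giving $\Der(\W) = \Inn(\W)$. The map $\W \to \Inn(\W)$, $w \mapsto \ad_w$, is surjective by construction and injective because $\W$ has trivial centre, so $\Inn(\W) \cong \W$, and $H^1(\W;\W) = \Der(\W)/\Inn(\W) = 0$ is immediate.

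The only real subtlety I anticipate is the degenerate case $k = 0$ in the graded computation, where the bracket $[e_0, \cdot]$ acts diagonally and so a single identity is not enough to rigidify $d$; this is resolved cleanly by the auxiliary $[e_{-1}, \cdot]$ argument above, and everything else is routine.
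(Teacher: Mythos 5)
Your proof is correct, but it takes a genuinely different route from the paper's. The paper verifies the hypotheses of Theorem \ref{thm:Farnsteiner} (namely $H^1(\kk e_0;\kk e_n)=0$ for $n\neq 0$ and $\Hom_{\kk e_0}(\kk e_n,\kk e_m)=0$ for $n\neq m$), which immediately gives $\Der(\W,W)=\Der(\W,W)_0+\Inn(\W,W)$, and then handles the degree-zero part via Lemma \ref{lem:derivations of degree 0}, whose computation rests on the additivity relation $\lambda_n+\lambda_m=\lambda_{n+m}$. You instead invoke only the decomposition of Proposition \ref{prop:decomposition of Der} (correctly noting that $\W$ is finitely generated, e.g.\ by $e_{-1}$ and $e_2$) and compute every graded piece by hand: for $k\neq 0$ the single relation $-k\alpha_n=(n-k)\alpha_0$ coming from $[e_0,e_n]=ne_n$ pins the piece down to $\kk\ad_{e_k}$, and for $k=0$ the recursion $\alpha_n=\alpha_{n-1}-\alpha_{-1}$ from $[e_{-1},e_n]$ finishes the job. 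This is essentially an unpackaged, elementary version of what Theorem \ref{thm:Farnsteiner} encodes (the semisimple action of $\ad_{e_0}$), and it is closer in spirit to the paper's treatment of $W_{\geq 1}$ in Proposition \ref{prop:W+}, where that theorem is unavailable because $e_0\notin W_{\geq 1}$; your degree-zero recursion is also shorter than the paper's. What your route buys is self-containedness; what the paper's buys is that all nonzero degrees are dispatched with no computation at all. Your deduction of the consequences --- testing $\ad_w(e_{-1})\in\W$ to force $w\in\W$, and using triviality of the centre for $\Inn(\W)\cong\W$ --- is correct and supplies details the paper leaves implicit.
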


We will use a theorem of Farnsteiner to prove Proposition \ref{prop:derivations of W1}.

\begin{thm}[{\cite[Proposition 1.2]{Farnsteiner}}]\label{thm:Farnsteiner}
   Let $L = \bigoplus_{n \in \ZZ} L_n$ be a $\ZZ$-graded finitely generated Lie algebra and let $M = \bigoplus_{n \in \ZZ} M_n$ be a graded representation of $L$ such that
   \begin{enumerate}
       \item $H^1(L_0;M_n) = 0$ for all $n \in \ZZ \nonzero$.
       \item $\Hom_{L_0}(L_n,M_m) = 0$ for $n \neq m$.
   \end{enumerate}
   Then $\Der(L,M) = \Der(L,M)_0 + \Inn(L,M)$.
\end{thm}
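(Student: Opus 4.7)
The plan is to use Proposition \ref{prop:decomposition of Der} to reduce to the graded setting: since $L$ is finitely generated, any $d \in \Der(L,M)$ decomposes as a finite sum $d = \sum_k d_k$ with $d_k \in \Der(L,M)_k$, so it suffices to show that $\Der(L,M)_k \subseteq \Inn(L,M)$ for every $k \neq 0$. Once this is established, given any $d \in \Der(L,M)$, its degree-zero part sits in $\Der(L,M)_0$ and the sum of the nonzero-degree parts, being a finite sum of inner derivations, lies in the subspace $\Inn(L,M)$.

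So fix $k \neq 0$ and let $d \in \Der(L,M)_k$. The main steps I would carry out are the following. First, restrict $d$ to $L_0$. Because $d$ has degree $k$, this restriction is a derivation $d|_{L_0} \colon L_0 \to M_k$, where $M_k$ is viewed as an $L_0$-module. Hypothesis (1) says $H^1(L_0;M_k) = 0$, so this derivation is inner: there exists $m \in M_k$ with $d(x) = x \cdot m$ for all $x \in L_0$. Let $\delta_m$ denote the inner derivation $L \to M$, $y \mapsto y \cdot m$, and set $d' \coloneqq d - \delta_m \in \Der(L,M)_k$. By construction $d'$ vanishes on $L_0$.

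Next, I would show that $d'$ vanishes on every other graded piece. For $n \neq 0$, $x \in L_0$, $y \in L_n$, the derivation identity gives
\[
d'([x,y]) = x \cdot d'(y) - y \cdot d'(x) = x \cdot d'(y),
\]
so $d'|_{L_n} \colon L_n \to M_{n+k}$ is an $L_0$-module homomorphism. Since $k \neq 0$ implies $n + k \neq n$, hypothesis (2) forces $\Hom_{L_0}(L_n,M_{n+k}) = 0$, so $d'|_{L_n} = 0$. Thus $d'$ vanishes on every homogeneous component and hence on all of $L$, giving $d = \delta_m \in \Inn(L,M)$, as required.

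There is no serious obstacle here: the proof is essentially an extraction of information from the two hypotheses, which have been engineered exactly to handle the two kinds of ``directions'' a degree-$k$ derivation can go — hypothesis (1) kills the piece landing inside $L_0$'s action on $M_k$, and hypothesis (2) kills the off-diagonal $L_0$-equivariant pieces $L_n \to M_{n+k}$. The only point that warrants care is the bookkeeping needed to justify that the decomposition of $d$ from Proposition \ref{prop:decomposition of Der} is a \emph{finite} sum, which uses finite generation of $L$ and ensures that the sum $\sum_{k \neq 0} d_k$ in the conclusion is genuinely an element of $\Inn(L,M)$ rather than a formal/infinite sum.
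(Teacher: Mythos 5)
Your proof is correct. The paper does not actually prove this statement---it is quoted directly from Farnsteiner \cite{Farnsteiner} as Proposition 1.2---and your argument is precisely the standard one: decompose $d$ into finitely many homogeneous components via Proposition \ref{prop:decomposition of Der}, use hypothesis (1) to subtract an inner derivation $\delta_m$ with $m \in M_k$ so that the degree-$k$ part vanishes on $L_0$, and then observe that the corrected derivation restricts on each $L_n$ to an $L_0$-module map $L_n \to M_{n+k}$, which hypothesis (2) forces to be zero. All the small points you flag (that $\delta_m$ is itself homogeneous of degree $k$, and that finite generation is what makes the sum of nonzero-degree components a finite sum and hence an element of $\Inn(L,M)$) are exactly the ones that need checking, and you handle them correctly.
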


In order to compute $\Der(\W,W)$, we show that $\Der(\W,W)_0 \subseteq \Inn(\W,W)$. It will then be clear that $\Der(\W,W) = \Inn(\W,W)$, provided the conditions of Theorem \ref{thm:Farnsteiner} are satisfied.

\begin{lem}\label{lem:derivations of degree 0}
    Let $n \geq -1$. We have $\Der(W_{\geq n},W)_0 = \kk \ad_{e_0} \subseteq \Inn(\W,W)$.
\end{lem}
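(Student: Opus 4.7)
My plan is to exploit the grading directly. Since $d \in \Der(W_{\geq n}, W)_0$ has degree $0$ and $W_k = \kk e_k$ for every $k \in \ZZ$, for each $k \geq n$ I may write $d(e_k) = \lambda_k e_k$ for a scalar $\lambda_k \in \kk$. Applying the Leibniz rule to $[e_i, e_j] = (j - i) e_{i+j}$ and cancelling the nonzero factor $(j-i)$ yields the constraint
\[
\lambda_{i+j} = \lambda_i + \lambda_j \qquad \text{for all distinct } i, j \geq n.
\]
The goal is to upgrade this almost-additivity to the linear formula $\lambda_k = k\mu$ for a single $\mu \in \kk$. Once this is established, $d(e_k) = k\mu\, e_k = \mu\, \ad_{e_0}(e_k)$, so $d = \mu\, \ad_{e_0}$, and the containment $\kk \ad_{e_0} \subseteq \Inn(\W, W)$ is immediate since $e_0 \in \W$.

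The central combinatorial step is to compare two decompositions of the same integer. For $k \geq n+1$ (so that $k-1 \geq n$), the identities $2k+1 = k + (k+1) = (k-1) + (k+2)$ and $2k+2 = k + (k+2) = (k-1) + (k+3)$ give, after subtracting,
\[
\lambda_{k+2} - \lambda_{k+1} = \lambda_k - \lambda_{k-1}, \qquad \lambda_{k+3} - \lambda_{k+2} = \lambda_k - \lambda_{k-1}.
\]
Writing $\delta_k = \lambda_k - \lambda_{k-1}$, these say $\delta$ has both periods $2$ and $3$ on the range $k \geq n+1$, hence is constant there; call the value $\mu$. Then $\lambda_k = \lambda_n + (k-n)\mu$ for all $k \geq n$. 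Substituting this into the original relation $\lambda_{2n+1} = \lambda_n + \lambda_{n+1}$ (valid since $n \neq n+1$) yields $\lambda_n + (n+1)\mu = 2\lambda_n + \mu$, i.e.\ $\lambda_n = n\mu$, so $\lambda_k = k\mu$ throughout.

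The main obstacle is exactly the exclusion of $i = j$ in the functional equation: the usual Cauchy-style step $\lambda_{2i} = 2\lambda_i$ is not directly available, so linearity cannot be read off in one line and must instead be extracted by playing distinct decompositions of an integer against each other, as above. Everything else is a routine verification.
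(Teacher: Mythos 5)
Your proof is correct, and it takes a genuinely different route from the paper's. The paper's argument for this lemma is an ad hoc manipulation with the specific indices $-1,0,1,\ldots,5$ (e.g.\ $\lambda_0+\lambda_1=\lambda_1$ forces $\lambda_0=0$, and $2\lambda_1+\lambda_3=\lambda_1+\lambda_4=\lambda_5=\lambda_2+\lambda_3$ forces $\lambda_2=2\lambda_1$), which only works verbatim when those indices are available in $W_{\geq n}$; for general $n$ the paper runs a separate argument (Lemma \ref{lem:derivations of Wn of degree 0}) that first establishes $m\lambda_k=\lambda_{mk}$ by bracketing with $e_\ell$ for $\ell>2k$ and then pins down $\lambda_{n+1}-\lambda_n$ via the auxiliary quantity $\lambda_n+\tfrac{\lambda_n}{n}$. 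Your device --- passing to the difference sequence $\delta_k=\lambda_k-\lambda_{k-1}$ and showing it has the two coprime periods $2$ and $3$ on $k\geq n+1$, hence is constant, then fixing the single remaining constant from $\lambda_{2n+1}=\lambda_n+\lambda_{n+1}$ --- handles every $n\geq -1$ uniformly, so it subsumes both of the paper's degree-zero lemmas in one stroke. Both arguments rest on the same underlying idea of comparing distinct decompositions of one integer to circumvent the excluded diagonal $i=j$ in the almost-additivity relation; yours packages that idea more cleanly and with no case analysis on $n$. The omitted direction $\kk\ad_{e_0}\subseteq\Der(W_{\geq n},W)_0$ and the final containment in $\Inn(\W,W)$ (via the element $-e_0\in W$) are indeed immediate, as you say.
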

\begin{proof}
    Let $d \in \Der(\W,W)_0$. Then for all $n \geq -1$ there exists $\lambda_n \in \kk$ such that $d(e_n) = \lambda_n e_n$. We have
    $$d([e_n,e_m]) = [e_n,d(e_m)] + [d(e_n),e_m] = (\lambda_n + \lambda_m)[e_n,e_m] = (m - n)(\lambda_n + \lambda_m)e_{n+m}.$$
    On the other hand,
    $$d([e_n,e_m]) = (m - n)d(e_{n+m}) = (m - n)\lambda_{n+m}e_{n+m}.$$
    Combining the above, we conclude that $\lambda_n + \lambda_m = \lambda_{n+m}$ for all $n \neq m$. Now, $\lambda_0 + \lambda_1 = \lambda_1$, so $\lambda_0 = 0$. Furthermore, $\lambda_{-1} + \lambda_1 = \lambda_0 = 0$, so $\lambda_{-1} = -\lambda_1$.
    
    Now consider $2\lambda_1 + \lambda_3$:
    $$2\lambda_1 + \lambda_3 = \lambda_1 + \lambda_4 = \lambda_5 = \lambda_2 + \lambda_3.$$
    Hence, $\lambda_2 = 2\lambda_1$. Inductively, we deduce that $n\lambda_1 = \lambda_n$ for all $n \geq -1$. Letting $\lambda = \lambda_1$, it follows that $d = \lambda \ad_{e_0} \in \Inn(\W,W)$.
\end{proof}

\begin{proof}[Proof of Proposition \ref{prop:derivations of W1}]
    By Lemma \ref{lem:derivations of degree 0}, it suffices to check the conditions for Theorem \ref{thm:Farnsteiner} are satisfied when $L = \W$ and $M = W$.
    
    First, we must show that 
    $$H^1(\kk e_0;\kk e_n) \cong \Der(\kk e_0,\kk e_n)/\Inn(\kk e_0,\kk e_n) = 0$$
    for all $n \in \ZZ \nonzero$. Let $n \in \ZZ \nonzero$. We have
    $$\Inn(\kk e_0,\kk e_n) = \kk \ad_{e_n} \subseteq \Der(\kk e_0,\kk e_n) \subseteq \Hom_\kk(\kk e_0,\kk e_n).$$
    By a dimension count, $H^1(\kk e_0;\kk e_n) = 0$.

    To end the proof, we can easily see that $\Hom_{\kk e_0}(\kk e_n,\kk e_m) = 0$ for $n \neq m$, where $n \in \ZZ_{\geq -1}$ and $m \in \ZZ$, since $e_n$ and $e_m$ have different eigenvalues with respect to the action of $e_0$.
\end{proof}

The computation of derivations of $W_{\geq 0}$ is almost identical to derivations of $\W$. We therefore omit the proof.

\begin{prop}\label{prop:derivations of W0}
    We have $\Der(W_{\geq 0},W) = \Inn(W_{\geq 0},W)$. Consequently, $\Der(W_{\geq 0}) = \Inn(W_{\geq 0}) \cong W_{\geq 0}$ and $H^1(W_{\geq 0};W_{\geq 0}) = 0$. \qed
\end{prop}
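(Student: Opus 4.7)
The plan is to mimic the proof of Proposition \ref{prop:derivations of W1} almost verbatim, applying Farnsteiner's theorem (Theorem \ref{thm:Farnsteiner}) with $L = W_{\geq 0} = \bigoplus_{n \geq 0} \kk e_n$ and $M = W = \bigoplus_{n \in \ZZ} \kk e_n$, and then extracting the stated corollaries.

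First, Lemma \ref{lem:derivations of degree 0} (applied with $n = 0$) already gives $\Der(W_{\geq 0}, W)_0 = \kk \ad_{e_0}$, and since $e_0 \in W_{\geq 0}$, this is contained in $\Inn(W_{\geq 0}, W)$. To invoke Farnsteiner, I would verify the same two hypotheses as before: for $n \in \ZZ \nonzero$, the inclusions $\kk \ad_{e_n} = \Inn(\kk e_0, \kk e_n) \subseteq \Der(\kk e_0, \kk e_n) \subseteq \Hom_{\kk}(\kk e_0, \kk e_n)$ and the dimension count force $H^1(\kk e_0; \kk e_n) = 0$; and $\Hom_{\kk e_0}(\kk e_n, \kk e_m) = 0$ for $n \neq m$ because $e_0$ acts on $\kk e_n$ and $\kk e_m$ with distinct scalars. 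These are literally the same computations as in the $\W$ case, restricted to $n \geq 0$. Together with the preceding paragraph, this gives $\Der(W_{\geq 0}, W) = \Inn(W_{\geq 0}, W)$.

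For the consequences, any $d \in \Der(W_{\geq 0})$ can be regarded as an element of $\Der(W_{\geq 0}, W)$, hence equals $\ad_w$ for some $w \in W$. A direct check using $[w, e_0]$ (applying the bracket formula to $w = \sum_n a_n e_n$) shows that $\ad_w(W_{\geq 0}) \subseteq W_{\geq 0}$ forces $a_n = 0$ for $n < 0$, i.e.\ $w \in W_{\geq 0}$. Thus $\Der(W_{\geq 0}) = \Inn(W_{\geq 0})$. A similarly routine calculation confirms that $W_{\geq 0}$ has trivial center, so $\Inn(W_{\geq 0}) \cong W_{\geq 0}$ and $H^1(W_{\geq 0}; W_{\geq 0}) = 0$.

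There is no real obstacle here: the only places the $n = -1$ proof used $e_{-1}$ specifically were in Lemma \ref{lem:derivations of degree 0} (where the argument $\lambda_{-1} = -\lambda_1$ was an input, but one verifies the same conclusion $\lambda_n = n\lambda_1$ holds using only indices $n \geq 0$, as indeed the lemma is stated for all $n \geq -1$) and in the hypothesis check of Farnsteiner (which only needs $L_0 = \kk e_0$ and positive-degree components). Hence the proof truly is a transparent adaptation, which justifies omitting the details.
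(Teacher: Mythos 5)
Your proof is correct and is exactly the argument the paper has in mind: it omits the proof precisely because it is the proof of Proposition \ref{prop:derivations of W1} run verbatim with $L = W_{\geq 0}$, using that $e_0 \in W_{\geq 0}$ so Theorem \ref{thm:Farnsteiner} and Lemma \ref{lem:derivations of degree 0} (with $n=0$) apply unchanged. Your extra checks (that the degree-$0$ computation survives restriction to nonnegative indices, and that $\ad_w(W_{\geq 0}) \subseteq W_{\geq 0}$ forces $w \in W_{\geq 0}$) are the right details to verify.
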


Proposition \ref{prop:derivations of W0} shows that $\Ext_{U(W_{\geq 0})}^1(\kk,W_{\geq 0}) = 0$. However, $W_{\geq 0}$ does have nontrivial extensions by one-dimensional representations: consider the short exact sequence
\beq\label{eq:extension of W0}
    0 \to W_{\geq 0} \to \W \to \W/W_{\geq 0} \to 0.
\eeq
This is because the $U(W_{\geq 0})$-module $\W/W_{\geq 0}$ is not the trivial one-dimensional module, since $e_0$ acts as $-\id$. In fact, we will see later that $\dim(\Ext_{U(W_{\geq 0})}^1(\W/W_{\geq 0},W_{\geq 0})) = 1$ and that \eqref{eq:extension of W0} is the unique nontrivial one-dimensional extension of $W_{\geq 0}$ (see Theorem \ref{thm:one-dimensional extension}).

Computing $\Der(W_{\geq 1},W)$ requires more work, since $e_0 \not\in W_{\geq 1}$, so we cannot use Theorem \ref{thm:Farnsteiner}. Instead, we can use the graded structure of $\Der(W_{\geq 1})$ from Proposition \ref{prop:decomposition of Der}.

\begin{prop}\label{prop:W+}
    We have $\Der(W_{\geq 1},W) = \Inn(W_{\geq 1},W)$. Consequently, $\Der(W_{\geq 1}) = \{\ad_w \mid w \in W_{\geq 0}\} \cong W_{\geq 0}$ and $\dim(H^1(W_{\geq 1};W_{\geq 1})) = 1$.
\end{prop}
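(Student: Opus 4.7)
The plan is to exploit the graded structure of $\Der(W_{\geq 1},W)$. Since $W_{\geq 1} = \bigoplus_{n \geq 1} \kk e_n$ is a $\ZZ$-graded Lie algebra and $W = \bigoplus_{n \in \ZZ} \kk e_n$ is a graded $W_{\geq 1}$-module, Proposition \ref{prop:decomposition of Der} gives
$$\Der(W_{\geq 1},W) = \bigoplus_{k \in \ZZ} \Der(W_{\geq 1},W)_k.$$
For every $k \in \ZZ$, $\ad_{e_k} \colon W_{\geq 1} \to W$ is a nonzero inner derivation of degree $k$, and since the centraliser of $W_{\geq 1}$ in $W$ is trivial (any $w = \sum a_m e_m$ commuting with $e_1$ forces $a_m = 0$ for $m \neq 1$, then commutation with $e_2$ kills $a_1$), these derivations span a subspace of $\Inn(W_{\geq 1},W)$ isomorphic to $W$. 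The problem therefore reduces to showing $\dim \Der(W_{\geq 1},W)_k \leq 1$ for each $k$.

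The case $k = 0$ is Lemma \ref{lem:derivations of degree 0}. For $k \neq 0$, parametrise $d \in \Der(W_{\geq 1},W)_k$ by scalars $\lambda_n \in \kk$ via $d(e_n) = \lambda_n e_{n+k}$; applying $d$ to $[e_n,e_m] = (m-n)e_{n+m}$ yields the linear recursion
$$(m-n)\lambda_{n+m} = (m-n-k)\lambda_n + (m-n+k)\lambda_m, \qquad n, m \geq 1,\ n \neq m,$$
with $\ad_{e_k}$ corresponding to the solution $\lambda_n = n - k$. Setting $n = 1$ determines $(\lambda_n)_{n \geq 3}$ from $\lambda_1$ and $\lambda_2$, so the decisive constraint comes from the first genuine cross-check, namely comparing the two expressions for $\lambda_5$ obtained from $(n,m) = (1,4)$ and from $(n,m) = (2,3)$.

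The argument then splits at $k = 1$. If $k \neq 1$, the value of $\ad_{e_k}$ on $e_1$ is $(1-k)e_{k+1} \neq 0$, so I can subtract a suitable scalar multiple of $\ad_{e_k}$ to reduce to $\lambda_1 = 0$; the cross-check then becomes a scalar multiple of the cubic $k^3 + 5k - 6 = (k-1)(k^2+k+6)$ times $\lambda_2$, and since $k^2+k+6$ has no integer roots this forces $\lambda_2 = 0$, after which the $n=1$ recursion gives $d = 0$. If $k = 1$, subtracting multiples of $\ad_{e_1}$ cannot change $\lambda_1$, but the same cross-check directly forces $\lambda_1 = 0$, and the resulting one-parameter family in $\lambda_2$ is exactly $\kk \cdot \ad_{e_1}$. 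The main obstacle is precisely this case split at $k = 1$, together with the cubic computation that rules out ``outer'' graded derivations in every nonzero degree.

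The stated consequences then follow immediately: $\Der(W_{\geq 1}) = \{\ad_w \mid w \in W,\ [w,W_{\geq 1}] \subseteq W_{\geq 1}\}$, and a quick check on homogeneous components of $w$ (the lowest degree component of $[w,e_1]$ must lie in $W_{\geq 1}$) shows this is the same as $\{\ad_w \mid w \in W_{\geq 0}\} \cong W_{\geq 0}$. Hence $H^1(W_{\geq 1};W_{\geq 1}) \cong W_{\geq 0}/W_{\geq 1}$ is one-dimensional.
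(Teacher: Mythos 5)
Your proof is correct, and its core coincides with the paper's: reduce to graded derivations via Proposition \ref{prop:decomposition of Der}, handle degree $0$ by Lemma \ref{lem:derivations of degree 0}, and in each nonzero degree pin down $d$ by comparing the two routes to $e_5$ (through $[e_1,e_4]$ and $[e_2,e_3]$); your cross-check constraint $(k^3-k^2+4k-12)\lambda_1=(k^3+5k-6)\lambda_2$ is exactly the paper's cubic relation. The one genuine difference is organisational: by factoring it as $(k-2)(k^2+k+6)\lambda_1=(k-1)(k^2+k+6)\lambda_2$ and dividing only by $k^2+k+6$ (which has no integer roots), you treat every $k\neq 0$ uniformly, with the single case split at $k=1$ where $\ad_{e_k}(e_1)=0$ prevents normalising $\lambda_1$; the paper instead divides by $(k-1)(k-2)$ as well and is therefore forced to handle degrees $1$ and $2$ separately, using for degree $2$ the rescaling trick with the subalgebra $\spn\{e_2,e_4,e_6,\ldots\}\cong W_{\geq 1}$. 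Your version buys a shorter and more uniform case analysis at no cost. The supporting steps you rely on --- that the $n=1$ instances of the recursion determine $\lambda_m$ for $m\geq 3$ from $\lambda_1,\lambda_2$ so the graded piece has dimension at most $1$ once the cross-check is imposed, that the centraliser of $W_{\geq 1}$ in $W$ is trivial, and that $\{w\in W\mid [w,W_{\geq 1}]\subseteq W_{\geq 1}\}=W_{\geq 0}$ --- all check out.
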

\begin{proof}
    By Proposition \ref{prop:decomposition of Der}, it suffices to prove that $\Der(W_{\geq 1},W)_n \subseteq \Inn(W_{\geq 1},W)$ for $n \in \ZZ$.
    
    The same proof as in Lemma \ref{lem:derivations of degree 0} works to show that $\Der(W_{\geq 1},W)_0 = \kk \ad_{e_0}$.
    
    We start by considering $\Der(W_{\geq 1},W)_1$. Note that, since $W_{\geq 1}$ is generated by $e_1$ and $e_2$ as a Lie algebra, any derivation $d \in \Der(W_{\geq 1},W)$ is uniquely determined by $d(e_1)$ and $d(e_2)$. Let $d \in \Der(W_{\geq 1},W)_1$, so $d(e_1) = \lambda e_2$ and $d(e_2) = \mu e_3$ for some $\lambda, \mu \in \kk$. Let $d' = d - \mu\ad_{e_1} \in \Der(W_{\geq 1},W)$, so that $d'(e_1) = \lambda e_2$ and $d'(e_2) = 0$. We have
    \begin{align*}
        d'(e_3) &= d'([e_1,e_2]) = [e_1,d'(e_2)] + [d'(e_1),e_2] = \lambda[e_2,e_2] = 0, \\
        d'(e_4) &= \frac{1}{2}d'([e_1,e_3]) = \frac{1}{2}([e_1,d'(e_3)] + [d'(e_1),e_3]) = \frac{1}{2}\lambda[e_2,e_3] = \frac{1}{2}\lambda e_5, \\
        d'(e_5) &= \frac{1}{3}d'([e_1,e_4]) = \frac{1}{3}([e_1,d'(e_4)] + [d'(e_1),e_4]) = \frac{1}{3}(\frac{1}{2}\lambda[e_1,e_5] + \lambda[e_2,e_4]) = \frac{4}{3}\lambda e_6.
    \end{align*}
    On the other hand,
    $$d'(e_5) = d'([e_2,e_3]) = [e_2,d'(e_3)] + [d'(e_2),e_3] = 0.$$
    Therefore, $\lambda = 0$, so $d' = 0$. It follows that $d = \mu \ad_{e_1}$, so $\Der(W_{\geq 1},W)_1 \subseteq \Inn(W_{\geq 1},W)$.

    Now let $d \in \Der(W_{\geq 1},W)_2$, so $d(e_1) = \lambda e_3$ and $d(e_2) = \mu e_4$ for some $\lambda, \mu \in \kk$. Consider the subalgebra $V_2$ of $\W$ spanned by $e_2,e_4,e_6,\ldots$. Certainly, $d$ restricts to a derivation of $V_2$. Note that $V_2$ is isomorphic to $W_{\geq 1}$ via the map $e_{2n} \mapsto 2e_n$. Under this isomorphism, $d$ induces a derivation $\widetilde{d}$ of $W_{\geq 1}$ of degree 1. By the above computation of $\Der(W_{\geq 1},W)_1$, we know that $\widetilde{d}$ is a scalar multiple of $\ad_{e_1}$, and therefore $\restr{d}{V_2}$ is a scalar multiple of $\ad_{e_2}$. In particular, we see that $d(e_2) = 0$, so that $\mu = 0$. It follows that $d = -\lambda \ad_{e_2}$, so $\Der(W_{\geq 1},W)_2 \subseteq \Inn(W_{\geq 1},W)$.
    
    Finally, let $n \in \ZZ \setminus \{0,1,2\}$ and consider $d \in \Der(W_{\geq 1},W)_n$. Then $d(e_1) = \lambda e_{n+1}$ and $d(e_2) = \mu e_{n+2}$ for some $\lambda, \mu \in \kk$. Taking Lie brackets up to $e_5$ and using the Leibniz rule for $d$ as above, we get
    $$(\lambda - \mu)n^3 - \lambda n^2 + (4\lambda - 5\mu)n - 12\lambda + 6\mu = 0.$$
    Let $\alpha = \frac{1}{n - 1}\lambda$ and $\beta = \frac{1}{n - 2}\mu$. Substituting $\alpha$ and $\beta$ into the equation above,
    $$(n^4 - 2n^3 + 5n^2 - 16n + 12)(\alpha - \beta) = 0.$$
    The quartic equation in $n$ does not vanish for $n \geq 3$ (its only integer solutions are $n = 1,2$), so we conclude that $\alpha = \beta$. Therefore, $d(e_1) = \alpha(n - 1)e_{n+1}$ and $d(e_2) = \alpha(n - 2)e_{n+2}$. It follows that $d = -\alpha \ad_{e_n}$, so $\Der(W_{\geq 1},W)_n \subseteq \Inn(W_{\geq 1},W)$ for all $n \geq 3$. This finishes the proof.
\end{proof}

We now move on to computing $\Der(W_{\geq n},W)$ for $n \geq 2$.

\begin{prop}\label{prop:derivations of Wn}
    For $n \in \NN$, we have $\Der(W_{\geq n},W) = \Inn(W_{\geq n},W)$. Consequently, $\Der(W_{\geq n}) = \{\ad_w \mid w \in W_{\geq 0}\} \cong W_{\geq 0}$ and $\dim(H^1(W_{\geq n};W_{\geq n})) = n$.
\end{prop}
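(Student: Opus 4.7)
My approach parallels the proof of Proposition \ref{prop:W+}. By Proposition \ref{prop:decomposition of Der}, $\Der(W_{\geq n},W)=\bigoplus_{s\in\ZZ}\Der(W_{\geq n},W)_s$, so it suffices to show each graded piece is contained in $\Inn(W_{\geq n},W)$. Degree zero is handled by Lemma \ref{lem:derivations of degree 0}. Once the equality $\Der(W_{\geq n},W)=\Inn(W_{\geq n},W)$ is established, the rest of the statement follows immediately: an inner derivation $\ad_w$ with $w\in W$ preserves $W_{\geq n}$ if and only if $w\in W_{\geq 0}$, so $\Der(W_{\geq n})\cong W_{\geq 0}$, and $H^1(W_{\geq n};W_{\geq n})\cong W_{\geq 0}/W_{\geq n}$ has dimension $n$.

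For a graded derivation $d$ of degree $s\neq 0$, write $d(e_m)=\lambda_m e_{m+s}$ for $m\geq n$. Applying $d$ to $[e_i,e_j]=(j-i)e_{i+j}$ for $i,j\geq n$, $i\neq j$, gives the family of relations
\begin{equation*}
\lambda_j(j+s-i)+\lambda_i(j-i-s)=(j-i)\lambda_{i+j}.
\end{equation*}
The goal is to show these force $\lambda_m=\alpha(m-s)$ for some $\alpha\in\kk$, which is exactly the condition $d=\alpha\ad_{e_s}$. I propose to proceed by induction on $n$, with Proposition \ref{prop:W+} as the base case. Given $d\in\Der(W_{\geq n},W)_s$, I would extend it to a graded degree-$s$ derivation $\tilde d$ of $W_{\geq n-1}$ by setting $\tilde d(e_{n-1})=\mu\,e_{n-1+s}$ for a suitable scalar $\mu$; the Leibniz rule on $[e_{n-1},e_k]$ forces
\begin{equation*}
\mu(k-n+1-s)=(k-n+1)\lambda_{k+n-1}-(k+s-n+1)\lambda_k\qquad\text{for all }k\geq n.
\end{equation*}
Once such a $\mu$ exists independently of $k$, the inductive hypothesis yields $\tilde d=\alpha\ad_{e_s}|_{W_{\geq n-1}}$, and restriction gives $d=\alpha\ad_{e_s}|_{W_{\geq n}}$.

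The main obstacle is verifying that the right-hand side above is indeed proportional to $k-n+1-s$, with the ratio $\mu$ independent of $k$. I expect this to fall out of the relations on the $\lambda_m$'s via careful polynomial manipulation, but isolated exceptional pairs $(n,s)$ where a discriminant-like factor vanishes may require separate treatment. For those cases I would fall back on the subalgebra trick from the proof of Proposition \ref{prop:W+}: for any $k\geq n$, the subspace $V_k=\spn\{e_{km}\mid m\geq 1\}$ is a Lie subalgebra of $W_{\geq n}$ isomorphic to $W_{\geq 1}$ via $e_{km}\mapsto k\,e_m$, and restricting $d$ to $V_k$ supplies additional linear constraints on the $\lambda_{km}$ via Proposition \ref{prop:W+}, ultimately enough to collapse the remaining freedom.
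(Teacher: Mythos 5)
Your overall architecture matches the paper's (reduce to graded pieces via Proposition \ref{prop:decomposition of Der}, handle degree $0$ separately, then show each nonzero-degree piece lies in $\kk\ad_{e_s}$), the relation $\lambda_j(j+s-i)+\lambda_i(j-i-s)=(j-i)\lambda_{i+j}$ is correct, and the deduction of the ``consequently'' statements from $\Der(W_{\geq n},W)=\Inn(W_{\geq n},W)$ is fine. But the heart of the proof is missing. Everything hinges on showing that this linear system in the $\lambda_m$ has a one-dimensional solution space --- equivalently, in your setup, that the quantity $\mu_k=\bigl((k-n+1)\lambda_{k+n-1}-(k+s-n+1)\lambda_k\bigr)/(k-n+1-s)$ is independent of $k$. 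You say you ``expect this to fall out \dots via careful polynomial manipulation,'' but that manipulation \emph{is} the content of the proposition, and it is not carried out. The paper does it by exhibiting an explicit degree-$5$ Lie relation valid in every $W_{\geq n}$ (Lemma \ref{lem:relations}), applying the Leibniz rule to it, and extracting $(s-k)\lambda_r=(r-k)\lambda_s$ whenever an explicit polynomial $H_k(r,s)$ is nonzero, after which the exceptional pairs are dispatched by a short transitivity argument through a third index. Some computation of this kind must appear; as written your argument is a framework, not a proof. (A minor point: for the degree-$0$ piece with general $n$ the correct reference is Lemma \ref{lem:derivations of Wn of degree 0}; the proof of Lemma \ref{lem:derivations of degree 0} as given only treats $\W$, since it uses $e_{-1},e_0,e_1$.)

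The proposed fallback also does not work in the generality you need. The subalgebra $V_k=\spn\{e_{km}\mid m\geq 1\}$ lies in $W_{\geq n}$ only for $k\geq n$, and a degree-$s$ derivation sends $e_{km}$ to $\kk e_{km+s}$, which sits inside the span of the $e_{km'}$ only when $k$ divides $s$. When $k\nmid s$, the restriction $d|_{V_k}$ is a derivation of $W_{\geq 1}$ with values in a \emph{twisted} (intermediate-series) module rather than the adjoint module $W$, and Proposition \ref{prop:W+} says nothing about such derivations. Since you need both $k\geq n$ and $k\mid s$, the trick is unavailable for every degree with $0<|s|<n$, which is precisely where the problematic low-degree cases live. (The paper uses this trick exactly once, in the proof of Proposition \ref{prop:W+}, with $k=s=2$, where the divisibility condition holds.) So the exceptional cases cannot be swept up this way, and the main inductive step still rests on an unproven claim.
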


In order to prove Proposition \ref{prop:derivations of Wn}, it suffices to compute graded derivations of $W_{\geq n}$, by Proposition \ref{prop:decomposition of Der}. The idea of the proof is similar to that of Proposition \ref{prop:derivations of W1}. In that proof, we used the fact that we can take brackets up to $e_5$ in two different ways, which puts restrictions on the space of derivations of $W_{\geq 1}$. In other words, we exploited the relation
$$[e_1,[e_1[e_1,e_2]]] + 6[e_2,[e_1,e_2]] = 0$$
in $W_{\geq 1}$. For the proof of Proposition \ref{prop:derivations of Wn}, we first consider derivations of $W_{\geq n}$ of degree 0 and then find a relation in $W_{\geq n}$. It is not difficult to show that any derivation of $W_{\geq n}$ of degree 0 must be a multiple of $\ad_{e_0}$.

\begin{lem}\label{lem:derivations of Wn of degree 0}
    Letting $n \in \NN$, we have $\Der(W_{\geq n},W)_0 = \kk \ad_{e_0}$.
\end{lem}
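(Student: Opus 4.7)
The plan is to mimic the structure of the proof of Lemma \ref{lem:derivations of degree 0}: since $d\in\Der(W_{\geq n},W)_0$ has degree $0$ and the degree-$k$ piece of $W$ is $\kk e_k$, we immediately get scalars $\lambda_k \in \kk$ with $d(e_k) = \lambda_k e_k$ for every $k \geq n$. Applying $d$ to the bracket $[e_k,e_\ell] = (\ell-k)e_{k+\ell}$ and using the Leibniz rule gives the fundamental relation
\[
  \lambda_{k+\ell} = \lambda_k + \lambda_\ell \qquad\text{whenever } k,\ell \geq n \text{ and } k \neq \ell.
\]
The goal is then to show this forces $\lambda_k = ck$ for some $c \in \kk$, which immediately yields $d = c\ad_{e_0}$ since $\ad_{e_0}(e_k) = ke_k$.

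The point where the original argument of Lemma \ref{lem:derivations of degree 0} fails is that for $n \geq 1$ we have neither $e_0$ nor $e_{-1}$ available to normalise directly, so the additivity relation above must be exploited purely on $\ZZ_{\geq n}$. The first step I would take is to upgrade this relation to full additivity (including $k=\ell$): for $k \geq n$, choose any $m \geq n$ with $m \notin \{k, 2k\}$, and compute $\lambda_{2k+m}$ in two ways,
\[
  \lambda_{2k+m} = \lambda_{2k} + \lambda_m \qquad\text{and}\qquad \lambda_{2k+m} = \lambda_{k+m} + \lambda_k = 2\lambda_k + \lambda_m,
\]
yielding $\lambda_{2k} = 2\lambda_k$. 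Combined with the original relation this gives $\lambda_{p+q} = \lambda_p + \lambda_q$ for \emph{all} $p,q \geq n$.

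The second step is to extract linearity from this additive structure. From $2\lambda_{k+1} = \lambda_{2k+2} = \lambda_k + \lambda_{k+2}$ the sequence $(\lambda_{n+j})_{j \geq 0}$ is an arithmetic progression; let $c$ be its common difference, so $\lambda_{n+j} = \lambda_n + jc$. Plugging $k = \ell = n$ into the full additivity relation gives $2\lambda_n = \lambda_{2n} = \lambda_n + nc$, hence $\lambda_n = nc$, and therefore $\lambda_k = ck$ for all $k \geq n$. This gives $d = c\ad_{e_0}$, which finishes the proof.

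The main obstacle, such as it is, lies in the first step: one must be careful that the auxiliary index $m$ can always be chosen to dodge the excluded values $k$ and $2k$, which is where the hypothesis $n \in \NN$ (so that $\ZZ_{\geq n}$ is infinite and well-behaved under the sums appearing) is used. Once this bootstrap to full additivity on $\ZZ_{\geq n}$ is in place, the remainder is a short computation with arithmetic progressions.
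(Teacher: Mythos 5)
Your proof is correct, and while it opens the same way as the paper's --- extracting the off-diagonal additivity $\lambda_{k+\ell} = \lambda_k + \lambda_\ell$ from the Leibniz rule and then deriving the doubling identity $\lambda_{2k} = 2\lambda_k$ by comparing two decompositions of $\lambda_{2k+m}$ for an auxiliary index $m$ (the paper takes $\ell > 2k$ for this) --- the endgame is genuinely different. The paper iterates the doubling identity to get $m\lambda_k = \lambda_{mk}$, then sets $\mu = \lambda_n + \lambda_n/n$ and computes $n\mu = \lambda_n + \lambda_{n^2} = \lambda_{n(n+1)} = n\lambda_{n+1}$ to identify the common difference $\lambda_{n+1}-\lambda_n$ with $\lambda_n/n$; the division by $n$ and the requirement $n \neq n^2$ force the paper to assume $n \geq 2$ and quote the cases $n = 0,1$ separately. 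You instead promote the relation to full additivity on $\ZZ_{\geq n}$ (including the diagonal), read off from $2\lambda_{k+1} = \lambda_k + \lambda_{k+2}$ that $(\lambda_k)_{k \geq n}$ is an arithmetic progression, and pin down the intercept via $2\lambda_n = \lambda_{2n} = \lambda_n + nc$. This is cleaner: it avoids dividing by $n$, treats all $n \in \NN$ uniformly, and makes transparent that the only input is a Cauchy-type functional equation on the semigroup $\ZZ_{\geq n}$. One pedantic point: the decomposition $\lambda_{2k+m} = \lambda_{(k+m)+k} = \lambda_{k+m}+\lambda_k$ also requires $k+m \neq k$, so when $n = 0$ you must additionally exclude $m = 0$ from your choices; taking $m > 2k$, as the paper does, handles all the exclusions at once.
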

\begin{proof}
    The result has already been shown for $n = 0,1$, so we may assume that $n \geq 2$.
    
    Let $d \in \Der(W_{\geq n},W)_0$. Then for all $k \geq n$ there exists $\lambda_k \in \kk$ such that $d(e_k) = \lambda_k e_k$. For all $\ell \geq n$, we have
    $$d([e_k,e_\ell]) = [e_k,d(e_\ell)] + [d(e_k),e_\ell] = (\lambda_k + \lambda_\ell)[e_k,e_\ell] = (\ell - k)(\lambda_k + \lambda_\ell)e_{k+\ell}.$$
    On the other hand,
    $$d([e_k,e_\ell]) = (\ell - k)d(e_{k+\ell}) = (\ell - k)\lambda_{k+\ell}e_{k+\ell}.$$
    Combining the above, we conclude that $\lambda_k + \lambda_\ell = \lambda_{k+\ell}$ for all $k \neq \ell$.
    
    Consider $2\lambda_k$ for some $k \geq n$. Let $\ell > 2k$. Then
    $$2\lambda_k + \lambda_\ell = \lambda_k + \lambda_{k+\ell} = \lambda_{2k+\ell} = \lambda_{2k} + \lambda_\ell.$$
    Therefore, $2\lambda_k = \lambda_{2k}$. Inductively, we deduce that $m\lambda_k = \lambda_{mk}$ for all $m \in \NN$.
    
    Now let $\mu = \lambda_n + \frac{\lambda_n}{n}$. Then
    $$n\mu = \lambda_n + n\lambda_n = \lambda_n + \lambda_{n^2} = \lambda_{n(n+1)} = n\lambda_{n+1},$$
    so $\mu = \lambda_{n+1}$. Now, we have $\lambda_n + \frac{\lambda_n}{n} = \lambda_{n+1}$, so
    $$\frac{\lambda_n}{n} = \lambda_{n+1} - \lambda_n,$$
    and thus $\lambda_n = n(\lambda_{n+1} - \lambda_n)$. Let $\lambda = \lambda_{n+1} - \lambda_n$. We claim that $d = \lambda \ad_{e_0}$.
    
    Then, for $m \geq n$,
    $$\lambda_m + \lambda = \lambda_m + \lambda_{n+1} - \lambda_n = \lambda_{n+m+1} - \lambda_n.$$
    But we also know that $\lambda_{n+m+1} = \lambda_{m+1} + \lambda_n$, so
    $$\lambda_m + \lambda = \lambda_{m+1} + \lambda_n - \lambda_n = \lambda_{m+1}.$$
    Since $\lambda_n = n\lambda$ and $\lambda_{m+1} = \lambda_m + \lambda$ for all $m \geq n$, it follows by induction that $\lambda_m = m\lambda$ for all $m \geq n$, proving the claim.
\end{proof}

We are nearly ready to prove Proposition \ref{prop:derivations of Wn}: we only need to find a relation in $W_{\geq n}$.

\begin{lem}\label{lem:relations}
    Let $n,m \in \ZZ$. We have the following relation on $e_n$ and $e_m$:
    \begin{multline*}
        nm[[e_n,e_m],[[e_n,e_m],[[e_n,e_m],[e_m,[e_n,[e_n,e_m]]]]]] \\
        + 3(m - n)(n + m)[[e_m,[e_n,[e_n,e_m]]],[[e_m,[e_n,[e_n,e_m]]],[e_n,e_m]]] = 0.
    \end{multline*}
\end{lem}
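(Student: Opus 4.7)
The plan is to verify the relation by direct computation, using only the defining bracket $[e_i,e_j] = (j-i)e_{i+j}$ of the Witt algebra. Since every expression on display is an iterated bracket of $e_n$ and $e_m$ alone, and $[e_n,e_m]$ lands in $\kk e_{n+m}$, all subsequent brackets will sit on a single ray $\{e_{k(n+m)}\}$, turning the problem into scalar bookkeeping. Throughout I will set $a = n+m$ for convenience.

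First I would compute the three ``inner'' building blocks that appear as leaves of both nested expressions:
\begin{align*}
    [e_n,e_m] &= (m-n)\,e_a, \\
    [e_n,[e_n,e_m]] &= (m-n)(a-n)\,e_{a+n} = m(m-n)\,e_{2n+m}, \\
    Y \coloneqq [e_m,[e_n,[e_n,e_m]]] &= m(m-n)(2n+m-m)\,e_{2n+2m} = 2nm(m-n)\,e_{2a}.
\end{align*}
Thus $[e_n,e_m]$ is a scalar multiple of $e_a$ and $Y$ is a scalar multiple of $e_{2a}$, so every remaining bracket is of the form $[e_{pa},e_{qa}] = (q-p)a\,e_{(p+q)a}$.

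Next I would evaluate the two iterated brackets, extracting scalars at each stage. For the first term, iteratively bracketing $Y$ on the left by $[e_n,e_m] = (m-n)e_a$ three times turns $e_{2a} \leadsto e_{3a} \leadsto e_{4a} \leadsto e_{5a}$, picking up factors of $a$, $2a$, $3a$ respectively (and powers of $(m-n)$). A straightforward track gives
\[
    [[e_n,e_m],[[e_n,e_m],[[e_n,e_m],Y]]] = 12\,nm\,a^3(m-n)^4\,e_{5a}.
\]
For the second term, bracketing $[e_n,e_m]$ on the right of $Y$ twice moves $e_a \leadsto e_{3a} \leadsto e_{5a}$ (with a sign from $[e_{2a},e_a] = -a\,e_{3a}$), yielding
\[
    [Y,[Y,[e_n,e_m]]] = -4\,n^2m^2\,a^2(m-n)^3\,e_{5a}.
\]

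Finally, multiplying the first expression by $nm$ and the second by $3(m-n)(n+m) = 3(m-n)a$ gives $+12\,n^2m^2a^3(m-n)^4\,e_{5a}$ and $-12\,n^2m^2a^3(m-n)^4\,e_{5a}$, respectively, whose sum is zero. The only obstacle is arithmetical care: keeping track of the many factors of $(m-n)$, of $a$, and the single sign flip coming from $[e_{2a},e_a]$. There is no conceptual subtlety—the coefficients $nm$ and $3(m-n)(n+m)$ in the statement are precisely chosen to force cancellation of the unique $e_{5a}$ component produced by both nested brackets.
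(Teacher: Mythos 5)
Your computation is correct and follows essentially the same route as the paper: both reduce everything to scalar multiples of $e_{k(n+m)}$, compute the two nested brackets as $12nm(m-n)^4(n+m)^3 e_{5n+5m}$ and $-4n^2m^2(m-n)^3(n+m)^2 e_{5n+5m}$ respectively, and observe that the coefficients $nm$ and $3(m-n)(n+m)$ force cancellation. The only cosmetic difference is that the paper first isolates the relation $[e_{a},[e_{a},[e_{a},e_{2a}]]] + 6a[e_{2a},[e_{2a},e_{a}]] = 0$ between $e_{a}$ and $e_{2a}$ before substituting, whereas you track the scalars directly.
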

\begin{proof}
    We have
    \begin{align}
        [e_n,e_m] &= (m - n)e_{n+m},\label{eq:n+m bracket} \\
        [e_n,[e_n,e_m]] = (m - n)[e_n,e_{n+m}] &= (m - n)m e_{2n+m}, \nonumber \\
        [e_m,[e_n,[e_n,e_m]]] = (m - n)m [e_m,e_{2n+m}] &= 2(m - n)nm e_{2n+2m} \label{eq:2n+2m bracket}.
    \end{align}
    We can easily find a relation between $e_{n+m}$ and $e_{2n+2m}$:
    \begin{align}
        [e_{n+m},[e_{n+m},[e_{n+m},e_{2n+2m}]]] &= 6(n + m)^3 e_{5n+5m}, \label{eq:degree 5 relation part 1} \\
        [e_{2n+2m},[e_{2n+2m},e_{n+m}]] &= -(n + m)^2 e_{5n+5m}. \label{eq:degree 5 relation part 2}
    \end{align}
    Therefore, $[e_{n+m},[e_{n+m},[e_{n+m},e_{2n+2m}]]] + 6(n + m)[e_{2n+2m},[e_{2n+2m},e_{n+m}]] = 0$. We can now find a relation between $e_n$ and $e_m$ by substituting \eqref{eq:n+m bracket} and \eqref{eq:2n+2m bracket} into \eqref{eq:degree 5 relation part 1} and \eqref{eq:degree 5 relation part 2}:
    \begin{align*}
        [[e_n,e_m],[[e_n,e_m],[[e_n,e_m],[e_m,[e_n,[e_n,e_m]]]]]] &= 12nm(m - n)^4(n + m)^3 e_{5n+5m}, \\
        [[e_m,[e_n,[e_n,e_m]]],[[e_m,[e_n,[e_n,e_m]]],[e_n,e_m]]] &= -4n^2m^2(m - n)^3(n + m)^2 e_{5n+5m}.
    \end{align*}
    The lemma follows immediately.
\end{proof}

We can now prove Proposition \ref{prop:derivations of Wn}.

\begin{proof}[Proof of Proposition \ref{prop:derivations of Wn}]
    The result has already been shown for $n = 0,1$, so fix $n \geq 2$. By Lemma \ref{lem:derivations of Wn of degree 0}, we already know that $\Der(W_{\geq n},W)_0 = \kk \ad_{e_0}$. So, let $k \in \ZZ \nonzero$ and consider $d \in \Der(W_{\geq n},W)_k$. For $m \geq n$, write $d(e_m) = \lambda_m e_{m+k}$, where $\lambda_m \in \kk$. We claim that $d = \alpha \ad_{e_k}$ for some $\alpha \in \kk$, or in other words, that $\lambda_m = (m - k)\alpha$ for all $m \geq n$.
    
    Letting $r,s \geq n$ with $r \neq s$, Lemma \ref{lem:relations} implies that
    \begin{multline*}
        d\Bigl(rs[[e_r,e_s],[[e_r,e_s],[[e_r,e_s],[e_s,[e_r,[e_r,e_s]]]]]] \\
        + 3(s - r)(r + s)[[e_s,[e_r,[e_r,e_s]]],[[e_s,[e_r,[e_r,e_s]]],[e_r,e_s]]]\Bigr) = 0.
    \end{multline*}
    Therefore, applying the Leibniz rule repeatedly,
    \beq\label{eq:Leibniz on relation}
        \begin{split}
            rs\Bigl(&[[d(e_r),e_s],[[e_r,e_s],[[e_r,e_s],[e_s,[e_r,[e_r,e_s]]]]]] \\
            + &[[e_r,d(e_s)],[[e_r,e_s],[[e_r,e_s],[e_s,[e_r,[e_r,e_s]]]]]] \\
            + &[[e_r,e_s],[[d(e_r),e_s],[[e_r,e_s],[e_s,[e_r,[e_r,e_s]]]]]] \\
            + &[[e_r,e_s],[[e_r,d(e_s)],[[e_r,e_s],[e_s,[e_r,[e_r,e_s]]]]]] \\
            + &[[e_r,e_s],[[e_r,e_s],[[d(e_r),e_s],[e_s,[e_r,[e_r,e_s]]]]]] \\
            + &[[e_r,e_s],[[e_r,e_s],[[e_r,d(e_s)],[e_s,[e_r,[e_r,e_s]]]]]] \\
            + &[[e_r,e_s],[[e_r,e_s],[[e_r,e_s],[d(e_s),[e_r,[e_r,e_s]]]]]] \\
            + &[[e_r,e_s],[[e_r,e_s],[[e_r,e_s],[e_s,[d(e_r),[e_r,e_s]]]]]] \\
            + &[[e_r,e_s],[[e_r,e_s],[[e_r,e_s],[e_s,[e_r,[d(e_r),e_s]]]]]] \\
            + &[[e_r,e_s],[[e_r,e_s],[[e_r,e_s],[e_s,[e_r,[e_r,d(e_s)]]]]]]\Bigr) \\
            + 3(s - r)(r + s)\Bigl(&[[d(e_s),[e_r,[e_r,e_s]]],[[e_s,[e_r,[e_r,e_s]]],[e_r,e_s]]] \\
            + &[[e_s,[d(e_r),[e_r,e_s]]],[[e_s,[e_r,[e_r,e_s]]],[e_r,e_s]]] \\
            + &[[e_s,[e_r,[d(e_r),e_s]]],[[e_s,[e_r,[e_r,e_s]]],[e_r,e_s]]] \\
            + &[[e_s,[e_r,[e_r,d(e_s)]]],[[e_s,[e_r,[e_r,e_s]]],[e_r,e_s]]] \\
            + &[[e_s,[e_r,[e_r,e_s]]],[[d(e_s),[e_r,[e_r,e_s]]],[e_r,e_s]]] \\
            + &[[e_s,[e_r,[e_r,e_s]]],[[e_s,[d(e_r),[e_r,e_s]]],[e_r,e_s]]] \\
            + &[[e_s,[e_r,[e_r,e_s]]],[[e_s,[e_r,[d(e_r),e_s]]],[e_r,e_s]]] \\
            + &[[e_s,[e_r,[e_r,e_s]]],[[e_s,[e_r,[e_r,d(e_s)]]],[e_r,e_s]]] \\
            + &[[e_s,[e_r,[e_r,e_s]]],[[e_s,[e_r,[e_r,e_s]]],[d(e_r),e_s]]] \\
            + &[[e_s,[e_r,[e_r,e_s]]],[[e_s,[e_r,[e_r,e_s]]],[e_r,d(e_s)]]]\Bigr) = 0.
        \end{split}
    \eeq
    Substituting $d(e_r) = \lambda_r e_{r+k}$ and $d(e_s) = \lambda_s e_{s+k}$ into \eqref{eq:Leibniz on relation} and simplifying,
    \begin{multline*}
        (k^2 + ks + 6s^2 + kr + 12rs + 6r^2)(k^2 + ks - 2s^2 + kr - 2r^2) \\ 
        \times ((s - k)\lambda_r - (r - k)\lambda_s)k(s - r)^3rs e_{5r + 5s + k} = 0.
    \end{multline*}
    Now, $k(s - r)^3rs \neq 0$, since $r,s,k \neq 0$ and $r \neq s$. Hence,
    $$(k^2 + ks + 6s^2 + kr + 12rs + 6r^2)(k^2 + ks - 2s^2 + kr - 2r^2)((s - k)\lambda_r - (r - k)\lambda_s) = 0,$$
    for all $r,s \geq n$ with $r \neq s$. For brevity, let
    $$H_k(r,s) = (k^2 + ks + 6s^2 + kr + 12rs + 6r^2)(k^2 + ks - 2s^2 + kr - 2r^2).$$
    It follows that
    \beq\label{eq:lambdas}
        (s - k)\lambda_r = (r - k)\lambda_s
    \eeq
    for all $r,s \geq n$ such that $H_k(r,s) \neq 0$.

    Fix $m \geq n$ such that $m \neq k$. Letting $\alpha = \frac{\lambda_m}{m - k}$, we get $\lambda_\ell = (\ell - k)\alpha$ for all $\ell \geq n$ such that $H_k(m,\ell) \neq 0$, by \eqref{eq:lambdas}.
    
    Now suppose $\ell \geq n$ such that $H_k(m,\ell) = 0$. We want to show that we still have $\lambda_\ell = (\ell - k)\alpha$ in this case. Since $H_k(m,r)$ is non-constant in $r$, we can choose $r \geq n$ such that $r \neq k$, $H_k(m,r) \neq 0$ and $H_k(\ell,r) \neq 0$. Since $H_k(m,r) \neq 0$, we already know that $\lambda_r = (r - k)\alpha$. Furthermore, by \eqref{eq:lambdas}, we get
    $$(r - k)\lambda_\ell = (\ell - k)\lambda_r = (\ell - k)(r - k)\alpha,$$
    since $H_k(\ell,r) \neq 0$. It follows that $\lambda_\ell = (\ell - k)\alpha$ for all $\ell \geq n$, even if $H_k(m,\ell) = 0$. Therefore, $d = \alpha \ad_{e_k}$, as claimed.
\end{proof}

\section{Derivations of general submodule-subalgebras}\label{sec:general submodule-subalgebras}

We now consider derivations of general submodule-subalgebras of $\W$. It is standard that $\Ext$ groups of a filtered object are related to $\Ext$ groups of the associated graded object (see, for example, \cite[Proposition 2.6.10]{Bjork}). We will see that this is also the case in our situation: by an associated graded argument, we will be able to extract a lot of information about derivations of ungraded submodule-subalgebras from the results of Section \ref{sec:graded derivations}.

We begin with some definitions.

\begin{dfn}
    For an element $w = \sum_{i=m}^n \lambda_i e_i \in W$, where $n,m \in \ZZ$ with $n > m$, and $\lambda_i \in \kk$ with $\lambda_n \neq 0$, we call $n$ the \emph{degree} of $w$ and write $\deg(w) = n$. Furthermore, we call $\lambda_n e_n$ the \emph{leading term} of $w$ and write $\LT(w) = \lambda_n e_n$. Note that if $\deg(u) \neq \deg(v)$, we have $\deg([u,v]) = \deg(u) + \deg(v)$, where $u,v \in W$.
    
    For $f = \alpha \prod_{i=0}^n(t - \lambda_i)^{k_i} \in \kk[t]$, where $\alpha, \lambda_i \in \kk$, $\lambda_i \neq \lambda_j$ for $i \neq j$ and $k_i \geq 1$ for all $i$, the \emph{radical} of $f$ is $\rad(f) = \prod_{i=0}^n(t - \lambda_i)$. If $k_i = 1$ for all $i$, we say $f$ is \emph{reduced}. The \emph{vanishing locus} of $f$ is $V(f) = \{\lambda_1,\ldots,\lambda_n\}$.
    
    Let $f \in \kk(t) \nonzero$ and write $f = \frac{p}{q}$, where $p,q \in \kk[t] \nonzero$. Let $\xi \in \kk$ and write
    $$p = (t - \xi)^k r, \quad q = (t - \xi)^\ell s,$$
    where $k,\ell \in \NN$ and $r,s \in \kk[t]$ do not vanish at $\xi$. The \emph{order of vanishing of $f$ at $\xi$} is $\ord_\xi(f) = k - \ell$. If $\ord_\xi(f) < 0$, we say $f$ has a \emph{pole of order $-\ord_\xi(f)$}. By convention, $\ord_\xi(0) = \infty$ for all $\xi \in \kk$. Furthermore, for the derivation $f\del \in \Der(\kk(t))$, we simply write $\ord_\xi(f\del) = \ord_\xi(f)$.
\end{dfn}

The goal of this section is to compute $\Der(\W(f))$ for $f \in \kk[t] \nonzero$, thus generalising Proposition \ref{prop:derivations of Wn} to arbitrary submodule-subalgebras of $\W$.

\begin{thm}\label{thm:derivations of W(f)}
    Let $f \in \kk[t] \nonzero$. We have $\Der(\W(f)) = \{\ad_w \mid w \in \W(\rad(f))\}$. Consequently, $\Der(\W(f)) \cong \W(\rad(f))$, and
    $$\dim H^1(\W(f);\W(f)) = \deg(f) - \deg(\rad(f)).$$
\end{thm}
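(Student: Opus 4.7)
The strategy is to prove the two inclusions separately: the forward inclusion by a direct calculation, and the reverse by an associated-graded argument based on the identification $\gr \W(f) \cong W_{\ge d-1}$ where $d = \deg f$. For $(\supseteq)$, given $w = \rad(f)h\del \in \W(\rad(f))$ and $v = fg\del \in \W(f)$, I would expand $[w,v] = (\rad(f)h(fg)' - (\rad(f)h)'fg)\del$ and check divisibility of the polynomial coefficient by $f$. Writing $f = \rad(f)m$ with $m := f/\rad(f)$, the essential identity is
\[
    \rad(f)f' - \rad(f)'f = \rad(f)^2 m',
\]
which is divisible by $f = \rad(f)m$ because $\rad(f) \cdot (m'/m) = \sum_{\xi \in V(f)}(k_\xi - 1)\prod_{\eta \ne \xi}(t - \eta)$ is a polynomial (with $k_\xi$ the multiplicity of $\xi$ in $f$).

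For $(\subseteq)$, filter $\W(f)$ by degree: $F_n \W(f) := \{v \in \W(f) : \deg v \le n\}$. The associated graded Lie algebra is $\gr \W(f) \cong W_{\ge d-1}$, with the class of $ft^k\del$ corresponding (up to rescaling by powers of the leading coefficient of $f$) to $e_{d+k-1}$. Given $D \in \Der(\W(f))$, I would first note that $\W(f)$ is finitely generated as a Lie algebra---its associated graded $W_{\ge d-1}$ is finitely generated, and one lifts generators---so $D$ is determined by finitely many values and its maximal degree-shift $k := \sup_{v \ne 0}(\deg D(v) - \deg v)$ is finite. Then $D$ induces a graded derivation $\bar D$ of $W_{\ge d-1}$ of degree $k$, which by Proposition \ref{prop:derivations of Wn} must equal $c\,\ad_{e_k}$ for some $c \in \kk$ (and necessarily $k \ge 0$, else $\bar D = 0$).

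The iterative step is to lift $\bar D$ back to an inner derivation coming from $\W(\rad(f))$. When $k \ge \deg\rad(f) - 1$, I would take $w_0 = c\alpha\rad(f)t^{k-\deg\rad(f)+1}\del \in \W(\rad(f))$, with $\alpha$ chosen to compensate for the leading coefficient of $f$; then $D - \ad_{w_0}$ has strictly smaller maximal degree-shift than $D$. Iterating, the degree-shift decreases by at least one at each step, and once $k < 0$ the induced graded derivation vanishes and the resulting map must be zero. This produces $D = \ad_w$ for some $w \in \W(\rad(f))$. The dimension formula then follows from $\Inn(\W(f)) \cong \W(f)$ (using $Z(\W(f)) = 0$), giving
\[
    H^1(\W(f);\W(f)) \cong \W(\rad(f))/\W(f) \cong \kk[t]/(f/\rad(f))\kk[t],
\]
of dimension $\deg f - \deg\rad(f)$.

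The main obstacle is the regime $0 \le k < \deg\rad(f) - 1$, where $\bar D = c\,\ad_{e_k}$ cannot be the associated graded of any element of $\W(\rad(f))$. Here I would show directly that $c = 0$ by exploiting the full (non-graded) bracket of $\W(f)$: setting $y_n := ft^{n-d+1}\del$, one has $[y_j, y_m] = (m-j)\sum_{i=0}^d a_i y_{j+m-d+i}$ (where $a_i$ are the coefficients of $f$), and matching sub-leading coefficients in the Leibniz rule $D[y_j, y_m] = [Dy_j, y_m] + [y_j, Dy_m]$ yields constraints on $c$ that force $c = 0$. This sub-leading analysis---which uses that $D$ is a derivation of $\W(f)$ itself rather than merely of its associated graded---is the technical heart of the argument.
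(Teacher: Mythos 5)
Your forward inclusion is correct, and the first half of your reverse inclusion (boundedness of the degree shift via finite generation, passage to the associated graded derivation of $W_{\geq d-1}$, identification with $c\,\ad_{e_k}$ via Proposition \ref{prop:derivations of Wn}, and the descent by subtracting $\ad_{w_0}$ with $w_0 \in \W(\rad(f))$ while $k \geq \deg\rad(f)-1$) matches the paper's Lemma \ref{lem:degree of derivation is bounded} and Propositions \ref{prop:associated graded derivation} and \ref{prop:derivations of Wn}. But the regime $0 \leq k \leq \deg\rad(f)-2$, which is nonempty whenever $f$ has at least two distinct roots, is exactly where your argument stops being a proof. You correctly identify it as the technical heart, but what you offer there --- ``matching sub-leading coefficients in the Leibniz rule yields constraints on $c$ that force $c=0$'' --- is an unexecuted plan, not an argument. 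The difficulty is that the sub-leading coefficients of $D(y_j)$ are themselves unknowns coupled to $c$, so one must show that a growing linear system in infinitely many unknowns admits no solution with $c \neq 0$; it is not clear how many orders below leading one must descend for general $f$, nor that a uniform argument exists. (For $f = t^2-1$ and $k=0$ one can check by hand that the first sub-leading order already forces $c=0$, but nothing in your sketch shows this persists for arbitrary $f$ and arbitrary $k$ in the bad range.) Since this is precisely the case the whole theorem turns on, I cannot count the proof as complete.

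The paper avoids this regime entirely by a different descent. Instead of insisting that each correction preserve $\W(f)$, it works in $\Der(\W(f),W)$ and subtracts $\lambda_i\ad_{e_{k_i}}$ for whatever $k_i$ the associated graded derivation produces, accepting that the intermediate maps land in $W$ rather than $\W(f)$. The degrees $k_i$ then strictly decrease without a lower bound, so the process need not terminate; summing the corrections gives $d = \ad_{g\del}$ for a formal series $g \in \kk((t^{-1}))$. Rationality of $g$ (namely $fg \in \kk[t]$) is then extracted from the two conditions $[g\del,f\del] \in \W$ and $[g\del,tf\del] \in \W$ (Proposition \ref{prop:derivations into W}), and finally the order-of-vanishing computation of Lemma \ref{lem:order of vanishing} at each $\xi \in V(f)$ shows $\ord_\xi(g) \geq 1$, i.e.\ $g\del \in \W(\rad(f))$. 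If you want to salvage your approach, the cleanest fix is to adopt this relaxation of the codomain: once you know $d = \ad_{g\del}$ with $g \in \kk(t)$, the local analysis at the roots of $f$ replaces your sub-leading coefficient computation and disposes of the problematic degrees in one stroke.
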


\begin{rem}
    A \emph{complete} Lie algebra is a Lie algebra with trivial center whose derivations are all inner. Theorem \ref{thm:derivations of W(f)} implies that $\W(f)$ is complete if and only if $f \in \kk[t]$ is reduced.
\end{rem}

In order to understand derivations of $\W(f)$, we will construct associated graded derivations, which are graded derivations of $W_{\geq n}$, where $n \coloneqq \deg(f\del)$. This will essentially be done by only considering the leading terms of elements of $\W(f)$. The following result is the first step in the construction of associated graded derivations.

\begin{lem}\label{lem:degree of derivation is bounded}
    Let $f \in \kk[t] \nonzero$. For $d \in \Der(\W(f),W)$, the set
    $$\{\deg(d(x)) - \deg(x) \mid x \in \W(f)\}$$
    is bounded above.
\end{lem}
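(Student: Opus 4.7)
The plan is to reduce the bound to controlling $c(c_k) := \deg(d(c_k)) - \deg(c_k)$ on the basis elements $c_k := ft^k\del$ of $\W(f)$ ($k \geq 0$), and then to set up a \emph{forward} recursion that uses a single Lie bracket with the fixed element $f\del$ to express $c_{m+n}$ as a linear combination of $c_m, \dots, c_{m+n-1}$ and $[f\del,c_{m+1}]$. From this, an elementary induction bounds $c(c_k)$.

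First I reduce to basis elements. Every $x \in \W(f)$ has a unique expansion $x = \sum_{i=0}^{K}\alpha_i c_i$ with $\alpha_K \neq 0$; since the $c_i$ have pairwise distinct degrees $n + i - 1$ (where $n = \deg(f)$), no leading-term cancellation is possible and $\deg(x) = n + K - 1$. Linearity of $d$ gives $\deg(d(x)) \leq \max_{\alpha_i \neq 0}\deg(d(c_i))$, whence
$$c(x) := \deg(d(x)) - \deg(x) \leq \max_{0 \leq i \leq K} c(c_i).$$
So it suffices to show that $\{c(c_k) : k \geq 0\}$ is bounded above.

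Now assume $n \geq 1$ and write $f = \sum_{i=0}^n f_i t^i$ with $f_n \neq 0$. A direct calculation gives
$$[f\del, c_{m+1}] = (m+1)\, f^2 t^m \del = (m+1)\sum_{i=0}^n f_i\, c_{m+i}.$$
Since the coefficient of $c_{m+n}$ is $(m+1)f_n \neq 0$ for all $m \geq 0$, I can solve for $c_{m+n}$ in terms of $[f\del, c_{m+1}]$ and $c_m, c_{m+1}, \dots, c_{m+n-1}$. Applying $d$ and using Leibniz on the bracket, the contributing terms are $[d(f\del),c_{m+1}]$ (degree $\leq \deg(d(f\del)) + n + m$), $[f\del, d(c_{m+1})]$ (degree $\leq (n-1) + \deg(d(c_{m+1}))$), and constant multiples of $d(c_{m+i})$ for $0 \leq i \leq n-1$. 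Subtracting $\deg(c_{m+n}) = m + 2n - 1$ from each of these bounds yields
$$c(c_{m+n}) \leq \max\bigl\{C,\ c(c_m),\ c(c_{m+1}),\ \dots,\ c(c_{m+n-1})\bigr\},$$
where $C := \deg(d(f\del)) - n + 1$ is a constant independent of $m$. Setting $M_K := \max_{0 \leq i \leq K} c(c_i)$, the recursion gives $M_K \leq \max(M_{n-1}, C)$ for all $K \geq n - 1$ by a trivial induction, so $c(c_k)$ is uniformly bounded above. For the edge case $n = 0$, one has $\W(f) = \W$, and an analogous forward recursion based on $[ft^2\del, c_k] = (k-2)f_0\, c_{k+1}$ handles this situation.

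The main obstacle is identifying a recursion that (i) runs \emph{forward}, expressing higher-indexed basis elements in terms of lower-indexed ones, and (ii) involves only a Lie bracket with a single \emph{fixed} element. A bracket between two elements whose degrees both grow with $m$ would be useless, because the Leibniz rule would then introduce $d$ evaluated at two moving targets, producing a recursion that cannot be closed. The identity $[f\del, c_{m+1}] = (m+1)\sum f_i c_{m+i}$ is ideal precisely because $(m+1)f_n \neq 0$ for all $m \geq 0$, so the top term $c_{m+n}$ can always be isolated; this is where the assumption $f \neq 0$ (so that $f_n$ exists) enters crucially.
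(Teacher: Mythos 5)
Your strategy is sound and is essentially the paper's own (bound the defect $c(\cdot)$ on finitely many low-degree elements and propagate it upward using the Leibniz rule together with $\deg([u,v])\le\deg(u)+\deg(v)$), and for $n=\deg(f)\ge 2$ your explicit recursion is correct and complete. However, there is a concrete gap when $n=1$ (e.g.\ $\W(t)=W_{\ge 0}$): in that case $c_{m+n}=c_{m+1}$, so the Leibniz term $[f\del,d(c_{m+1})]$ involves $d$ of the very element you are trying to control. Its contribution to your displayed bound is $c(c_{m+1})=c(c_{m+n})$ itself, which does not lie in the list $\{c(c_m),\dots,c(c_{m+n-1})\}=\{c(c_m)\}$; the inequality you would actually obtain is $c(c_{m+1})\le\max\{C,\,c(c_m),\,c(c_{m+1})\}$, which is vacuous, and the induction does not close.

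The repair is short. One option is to change the pivot: a direct computation gives $[tf\del,c_m]=(m-1)\sum_{i}f_ic_{m+i}$, so for $n=1$ and $m\neq 1$ you can solve for $c_{m+1}$ using only $c_m$ and a bracket of the \emph{fixed} element $tf\del$ with the strictly lower-indexed $c_m$, at the cost of adding $c_2$ to the finite base set. Alternatively, keep your identity and observe that the leading term of $(m+1)f_1\,d(c_{m+1})-[f\del,d(c_{m+1})]$ is $f_1(m+1-D)\beta e_D$, where $\beta e_D=\LT(d(c_{m+1}))$; this vanishes only when $D=m+1$, i.e.\ only when $c(c_{m+1})=0$, so in every case $c(c_{m+1})\le\max\{C,c(c_m),0\}$. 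A similar explicit patch is needed in your sketched $n=0$ case, since the coefficient $(k-2)$ vanishes at $k=2$ and $c_3$ must therefore be added to the base set. With these fixes the proof is complete, and it is in fact more careful than the paper's own argument, whose list of generators $t^{i-1}f\del$, $1\le i\le\deg(f)$, is too short in precisely these same small-degree cases.
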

\begin{proof}
    Let $d \in \Der(\W(f),W) \nonzero$. Let $n = \deg(f)$ and $x_i = t^{i-1}f\del$, so that $x_1,\ldots,x_n$ generate $\W(f)$ as a Lie algebra. Let
    $$N = \max\{\deg(d(x_i)) - \deg(x_i) \mid i \in \{1,\ldots,n\}\}.$$
    We claim that $\deg(d(x)) - \deg(x) \leq N$ for all $x \in \W(f)$. Note that
    $$d([x,y]) = [x,d(y)] + [d(x),y].$$
    Therefore, provided $\deg(d(x)) \leq \deg(x) + N$ and $\deg(d(y)) \leq \deg(y) + N$, we see that $\deg(d([x,y])) \leq \deg([x,y]) + N$. The claim now follows by an easy induction on $\deg(x)$.
\end{proof}

Lemma \ref{lem:degree of derivation is bounded} gives us a notion of degree for derivations $d \in \Der(\W(f),W)$.

\begin{dfn}
    Let $f \in \kk[t] \nonzero$. For $d \in \Der(\W(f),W) \nonzero$, the \emph{degree} of $d$ is
    $$\deg(d) = \max\{\deg(d(x)) - \deg(x) \mid x \in \W(f)\}.$$
    By convention, the degree of the zero derivation is $-\infty$.
\end{dfn}

We now consider leading terms of elements $x \in \W(f)$ such that $\deg(d(x)) = \deg(x) + \deg(d)$.

\begin{lem}\label{lem:d-compatible}
    Let $f \in \kk[t] \nonzero$, $d \in \Der(\W(f),W)$, and $x \in \W(f)$ such that $\deg(d(x)) = \deg(x) + \deg(d)$. Then $\deg(d(y)) = \deg(y) + \deg(d)$ for all $y \in \W(f)$ such that $\deg(y) = \deg(x)$. Furthermore, if $\LT(x) = \LT(y)$ then $\LT(d(x)) = \LT(d(y))$.
\end{lem}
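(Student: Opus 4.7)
The plan is to use the simple observation that any two elements of $\W(f)$ of the same degree differ, up to a scalar, by an element of strictly smaller degree, and then to exploit $\kk$-linearity of $d$ together with the definition of $\deg(d)$ as a bound on how much $d$ can raise degrees.

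First I would fix $x \in \W(f)$ with $\deg(d(x)) = \deg(x) + \deg(d)$ and set $n = \deg(x)$, writing $\LT(x) = \alpha e_n$ with $\alpha \in \kk \nonzero$. For an arbitrary $y \in \W(f)$ of degree $n$, write $\LT(y) = \beta e_n$ and set $c = \beta/\alpha$, so that $z \coloneqq y - c x \in \W(f)$ satisfies $\deg(z) < n$ (with the convention $\deg(0) = -\infty$). By $\kk$-linearity of $d$,
$$d(y) = c \cdot d(x) + d(z).$$
By hypothesis the first summand has degree exactly $n + \deg(d)$, with leading term $c \cdot \LT(d(x))$. By the definition of $\deg(d)$ (and the convention for the zero derivation), the second summand satisfies $\deg(d(z)) \leq \deg(z) + \deg(d) < n + \deg(d)$. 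Hence the two degrees disagree, so $\deg(d(y)) = n + \deg(d) = \deg(y) + \deg(d)$ and
$$\LT(d(y)) = c \cdot \LT(d(x)).$$

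For the second assertion, assume further that $\LT(x) = \LT(y)$, i.e.\ $\alpha = \beta$, so $c = 1$. Then the displayed equation above gives $\LT(d(y)) = \LT(d(x))$, as required.

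There is no substantial obstacle: everything reduces to the fact that $\W(f)$ is a $\kk$-subspace of $W$ (so that $z = y - cx$ lies in $\W(f)$ and the bound from $\deg(d)$ applies to it) together with the strict inequality of degrees between $c\,d(x)$ and $d(z)$. The only case worth commenting on is $z = 0$, i.e.\ $y = cx$; there the claim is immediate by linearity.
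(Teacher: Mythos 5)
Your proof is correct and follows essentially the same route as the paper: subtract a suitable scalar multiple of $x$ from $y$ to get an element of strictly smaller degree, then use the defining bound $\deg(d(z)) \leq \deg(z) + \deg(d)$ to isolate the leading term. The only cosmetic difference is that the paper rescales $y$ so that $\LT(y) = \LT(x)$ at the outset, whereas you carry the scalar $c$ through explicitly.
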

\begin{proof}
    Let $y \in \W(f)$ such that $\deg(y) = \deg(x)$. Rescaling $y$ if necessary, we may assume that $\LT(y) = \LT(x)$. Let $z = x - y$, so $\deg(z) < \deg(x)$ and $d(y) = d(x) - d(z)$. By definition of $\deg(d)$, we know that
    $$\deg(d(z)) \leq \deg(z) + \deg(d) < \deg(x) + \deg(d) = \deg(d(x)).$$
    It follows that $\LT(d(y)) = \LT(d(x) - d(z)) = \LT(d(x))$ and that
    $$\deg(d(y)) = \deg(d(x)) = \deg(x) + \deg(d) = \deg(y) + \deg(d),$$
    as required.
\end{proof}

Given Lemma \ref{lem:d-compatible}, we can now make the following definition.

\begin{dfn}
    Let $f \in \kk[t] \nonzero$, $d \in \Der(\W(f),W)$, and $n = \deg(f\del)$. We say that $k \in \ZZ_{\geq n}$ is \emph{$d$-compatible} if $\deg(d(x)) = \deg(x) + \deg(d)$ for some (and therefore all) $x \in \W(f)$ such that $\deg(x) = k$.
\end{dfn}

\begin{ex}
    Let $f = t^2 + 1$ and let $d = \ad_{e_5 + e_3} \in \Der(\W(f))$. Then $\deg(d) = 5$. Therefore, 5 is not $d$-compatible, while every $n \in \ZZ_{\geq 1} \setminus \{5\}$ is $d$-compatible.
\end{ex}

We now complete the construction of associated graded derivations.

\begin{prop}\label{prop:associated graded derivation}
    Let $f \in \kk[t] \nonzero$ and $d \in \Der(\W(f),W)$. Define a $\kk$-linear map $\gr(d) \colon W_{\geq n} \to W$ by
    $$\gr(d)(e_k) = \begin{cases}
        \LT(d(x)), &\text{where } x \in \W(f) \text{ and }\LT(x) = e_k, \text{ if } k \text{ is } d\text{-compatible,} \\
        0, &\text{if } k \text{ is not } d\text{-compatible,}
    \end{cases}$$
    for $k \in \ZZ_{\geq n}$. Then $\gr(d) = \lambda \ad_{e_N} \in \Der(W_{\geq n},W)$ for some $\lambda \in \kk$ and $N \in \ZZ$.
\end{prop}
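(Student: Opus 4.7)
The strategy is to show that $\gr(d)$ is a graded derivation from $W_{\geq n}$ to $W$ of degree $N \coloneqq \deg(d)$, and then to invoke Proposition \ref{prop:decomposition of Der} together with the fact (established in Propositions \ref{prop:derivations of W1}, \ref{prop:derivations of W0}, \ref{prop:W+}, and \ref{prop:derivations of Wn}) that $\Der(W_{\geq n}, W) = \Inn(W_{\geq n}, W)$, which together force $\gr(d) = \lambda \ad_{e_N}$ for some $\lambda \in \kk$. The case $d = 0$ is trivial with $\lambda = 0$, so I would assume $d \neq 0$. Well-definedness of $\gr(d)$ on $d$-compatible indices is already guaranteed by Lemma \ref{lem:d-compatible}, and the grading claim is immediate from the definition, since $\gr(d)(e_k) \in \kk e_{k+N}$ by construction.

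The substantive step is verifying the Leibniz rule
\[
\gr(d)([e_k, e_\ell]) = [\gr(d)(e_k), e_\ell] + [e_k, \gr(d)(e_\ell)]
\]
for all $k, \ell \geq n$. The case $k = \ell$ is trivial. For $k \neq \ell$, I would choose $x, y \in \W(f)$ with $\LT(x) = e_k$ and $\LT(y) = e_\ell$, so that $\deg([x,y]) = k + \ell$ and $\LT([x,y]) = (\ell - k) e_{k+\ell}$. Apply $d$ to the Leibniz identity
\[
d([x,y]) = [d(x), y] + [x, d(y)]
\]
in $\W(f)$ and compare the coefficients of $e_{k+\ell+N}$ on both sides. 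By Lemma \ref{lem:d-compatible}, the coefficient on the left matches that of $(\ell - k)\gr(d)(e_{k+\ell})$: when $k + \ell$ is $d$-compatible, both equal the leading coefficient of $d([x,y])$; when it is not, both vanish (the left because $\deg(d([x,y])) < k+\ell+N$, the right because $\gr(d)(e_{k+\ell}) = 0$ by definition). For the right-hand side, the coefficient of $e_{k+\ell+N}$ in $[d(x), y]$ equals $[\LT(d(x)), \LT(y)] = [\gr(d)(e_k), e_\ell]$ when $k$ is $d$-compatible, and equals $0 = [\gr(d)(e_k), e_\ell]$ when it is not (since then $\deg([d(x),y]) < k+\ell+N$ and $\gr(d)(e_k) = 0$); analogously for $[x, d(y)]$. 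Combining gives the Leibniz rule.

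The main obstacle, insofar as there is one, is bookkeeping the case analysis by $d$-compatibility, which is handled cleanly by the convention that $\gr(d)(e_k) = 0$ at non-$d$-compatible $k$: every vanishing that arises in the image of $d$ is matched by a corresponding vanishing of $\gr(d)$. Once Leibniz is established, $\gr(d)$ is a graded derivation of degree $N$ in $\Der(W_{\geq n}, W)$, and the cited results immediately identify it as $\lambda \ad_{e_N}$ for some $\lambda \in \kk$.
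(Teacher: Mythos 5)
Your proof is correct and follows essentially the same route as the paper: show that $\gr(d)$ is a well-defined graded derivation and then invoke the classification $\Der(W_{\geq n},W)=\Inn(W_{\geq n},W)$ from Proposition \ref{prop:derivations of Wn}. The only difference is organisational: where the paper verifies the Leibniz rule by an explicit case analysis (separating out $k+N=\ell$ or $\ell+N=k$, and the possible cancellation of leading terms), you extract the degree-$(k+\ell+N)$ component of $d([x,y])=[d(x),y]+[x,d(y)]$ uniformly, which absorbs all the degenerate cases into a single computation.
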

\begin{proof}
    First of all, by Lemma \ref{lem:d-compatible}, $\gr(d)$ is well-defined.
    
    If we prove that $\gr(d) \in \Der(W_{\geq n},W)$, then the result will follow from Proposition \ref{prop:derivations of Wn}. Therefore, it suffices to show that $\gr(d)$ is a derivation. In other words, it is enough to show that
    $$\gr(d)([e_k,e_\ell]) = [e_k,\gr(d)(e_\ell)] + [\gr(d)(e_k),e_\ell],$$
    for all $k,\ell \in \ZZ_{\geq n}$. Let $k,\ell \in \ZZ_{\geq n}$ be distinct, and let $u,v \in \W(f)$ such that $\LT(u) = e_k$ and $\LT(v) = e_\ell$. We have
    \beq\label{eq:Leibniz rule}
        d([u,v]) = [u,d(v)] + [d(u),v].
    \eeq
    The rest of the proof now follows by a straightforward case-by-case analysis, depending on whether $k$ and $\ell$ are $d$-compatible. We explicitly show the case where $k$ and $\ell$ are both $d$-compatible and leave the rest of the cases to the reader. Let $N = \deg(d)$. Since $\deg(d(u)) = k + N$, we have $\deg([d(u),v]) = k + \ell + N$ if $k + N \neq \ell$, but $\deg([d(u),v]) < k + \ell + N$ if $k + N = \ell$. We will therefore need to divide the proof into two cases.

    \vspace{2mm}

    \noindent \textbf{Case 1:} Either $k + N = \ell$ or $\ell + N = k$.

    \vspace{2mm}

    \noindent Without loss of generality, say $k + N = \ell$ and $\ell + N \neq k$. Then $k + \ell$ is $d$-compatible (which can be seen by looking at the degrees in \eqref{eq:Leibniz rule}), and $[\gr(d)(e_k),e_\ell] = [\LT(d(x)),e_\ell] = 0$, since $\deg(d(x)) = k + N = \ell$. Furthermore,
    $$\gr(d)([e_k,e_\ell]) = \LT(d([u,v])) = \LT([u,d(v)]) = [\LT(u),\LT(d(v))] = [e_k,\gr(d)(e_\ell)].$$
    Therefore, $\gr(d)([e_k,e_\ell]) = [e_k,\gr(d)(e_\ell)] + [\gr(d)(e_k),e_\ell]$.
    \vspace{2mm}

    \noindent \textbf{Case 2:} $k + N \neq \ell$ and $\ell + N \neq k$.

    \vspace{2mm}

    \noindent We have
    \begin{align*}
        [e_k,\gr(d)(e_\ell)] + [\gr(d)(e_k),e_\ell] &= [\LT(u),\LT(d(v))] + [\LT(d(u)),\LT(v)] \\
        &= \LT([u,d(v)]) + \LT([d(u),v]).
    \end{align*}
    Suppose $\LT([u,d(v)]) + \LT([d(u),v]) \neq 0$. Then $k + \ell$ is $d$-compatible and
    $$[e_k,\gr(d)(e_\ell)] + [\gr(d)(e_k),e_\ell] = \LT([u,d(v)] + [d(u),v]) = \LT(d([u,v])) = \gr(d)([e_k,e_\ell]).$$
    Otherwise, $\LT([u,d(v)]) + \LT([d(u),v]) = 0$, in which case $k + \ell$ is not $d$-compatible, so
    $$\gr(d)([e_k,e_\ell]) = 0 = [e_k,\gr(d)(e_\ell)] + [\gr(d)(e_k),e_\ell].$$
    This completes the proof.
\end{proof}

We now study $\Der(\W(f),\W)$. This is not strictly necessary to prove Theorem \ref{thm:derivations of W(f)}, which only considers $\Der(\W(f))$, but it is not much more difficult than considering $\Der(\W(f))$, and will be useful for the next section.

\begin{prop}\label{prop:derivations into W}
    Let $f \in \kk[t] \nonzero$ and $d \in \Der(\W(f),\W)$. Then $d = \ad_{\frac{h}{f}\del}$ for some $h \in \kk[t]$.
\end{prop}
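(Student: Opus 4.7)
The plan is to proceed by induction on $\deg(d)$ (finite by Lemma \ref{lem:degree of derivation is bounded}), reducing $d$ by successively subtracting suitable inner derivations. First, I would identify the relevant space of inner derivations: a direct computation gives
\[
  [h/f\del,\,fg\del] \;=\; \left(\tfrac{2hf'g}{f} + hg' - h'g\right)\del
\]
for $h, g \in \kk[t]$, so $\ad_{h/f\del}$ maps $\W(f)$ into $\W$ if and only if $f \mid 2hf'$, which in characteristic $0$ is equivalent to $\rad(f) \mid h$. The assignment $h \mapsto \ad_{h/f\del}$ from $\rad(f)\kk[t]$ to $\Der(\W(f),\W)$ is injective (the centraliser of $\W(f)$ in $\kk(t)\del$ is trivial by the same argument as in the proof of Proposition \ref{prop:derivations of W1}), so it remains to prove surjectivity.

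For the inductive step, take a nonzero $d \in \Der(\W(f),\W)$ with $N = \deg(d)$. By Proposition \ref{prop:associated graded derivation}, $\gr(d) = \lambda\ad_{e_N}$ for some $\lambda \neq 0$. The idea is to choose $h_0 \in \rad(f)\kk[t]$ of degree $N + \deg(f) + 1$ whose leading coefficient equals $\lambda$ times the leading coefficient of $f$; a short calculation with the bracket formula above shows that $\gr(\ad_{h_0/f\del}) = \lambda\ad_{e_N}$, so that $d' := d - \ad_{h_0/f\del}$ lies in $\Der(\W(f),\W)$ and has $\deg(d') < N$. The inductive hypothesis then provides $h_1 \in \rad(f)\kk[t]$ with $d' = \ad_{h_1/f\del}$, and taking $h := h_0 + h_1$ completes the step. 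Termination is ensured by the fact that $\gr(d)(e_n) \in \W$ (with $n := \deg(f\del)$) forces $N \geq -n - 1$, so $\deg(d)$ is bounded below.

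The main obstacle is guaranteeing the existence of $h_0$ at each step: it requires $\deg(h_0) = N + \deg(f) + 1 \geq \deg\rad(f)$, equivalently $N \geq \deg\rad(f) - \deg(f) - 1$. This bound is strictly stronger than the one coming from the image-in-$\W$ constraint whenever $\deg\rad(f) \geq 2$, and so cannot be deduced from the graded structure of $\gr\W(f) = W_{\geq n}$ alone. To establish it, I plan to apply the Leibniz rule to $d$ on brackets of the form $[fu\del,\,fv\del] = f^2(uv' - u'v)\del$ for suitably chosen $u, v \in \kk[t]$: the leading terms of $d(fu\del)$ and $d(fv\del)$ are fixed by $\gr(d) = \lambda\ad_{e_N}$, while their sub-leading coefficients are unknowns. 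Comparing the two expressions for $d([fu\del, fv\del])$ at sub-leading order produces a system of constraints which has no solution with $\lambda \neq 0$ whenever $N$ falls below the threshold, contradicting $\gr(d) \neq 0$.
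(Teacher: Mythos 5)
Your overall architecture --- peel off the top of $d$ by subtracting an inner derivation with the same associated graded part and induct on $\deg(d)$ --- is reasonable, and your preliminary computations are correct: the bracket formula, the equivalence $f \mid 2hf' \Leftrightarrow \rad(f) \mid h$, the injectivity of $h \mapsto \ad_{\frac{h}{f}\del}$, and the identification $\deg(h_0) = N + \deg(f) + 1$ all check out. But there is a genuine gap exactly where you flag ``the main obstacle'': you need every nonzero $d \in \Der(\W(f),\W)$ to satisfy $\deg(d) \geq \deg(\rad(f)) - \deg(f) - 1$, and for this you offer only a plan (``comparing the two expressions \ldots at sub-leading order produces a system of constraints which has no solution''), not an argument. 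That bound is not a technical loose end; it is essentially equivalent to the proposition itself in the range of degrees where the statement has content. As you yourself observe, the associated graded algebra $W_{\geq n}$ admits the derivations $\ad_{e_k}$ for every $k$, so the obstruction is invisible to leading terms; divisibility of the eventual numerator by $\rad(f)$ is a local condition at the roots of $f$ and is not detected degree by degree from the top. To see it at ``sub-leading order'' you would in general have to track on the order of $\deg(f) - \deg(\rad(f))$ orders below the top, with the lower-order coefficients of the $d(t^i f\del)$ entering as free parameters, and you give no indication of why the resulting system is inconsistent precisely when $N$ falls below the threshold (already for $f = t(t-1)$ you must exclude a derivation of degree $-2$, which the graded and image-in-$\W$ constraints both permit). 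Without this, your induction can stall at a nonzero derivation of low degree that cannot be matched by any $\ad_{\frac{h_0}{f}\del}$ with $h_0 \in \rad(f)\kk[t]$.

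The paper sidesteps the issue by not insisting that the correction terms preserve $\W$: it subtracts $\lambda_i \ad_{e_{k_i}}$ (so the intermediate derivations only map into $W$), lets the process run indefinitely, and packages the outcome as $d = \restr{\ad_{g\del}}{\W(f)}$ for a formal series $g = \sum_i \lambda_i t^{k_i+1} \in \kk((t^{-1}))$. Rationality of $g$ is then extracted at the end from just two instances of the hypothesis, $[g\del,f\del] \in \W$ and $[g\del,tf\del] \in \W$: multiplying the first relation by $t$ and subtracting it from the second gives $fg \in \kk[t]$, i.e.\ $g = \frac{h}{f}$. (The divisibility of $h$ by $\rad(f)$ is only established afterwards, in the proof of Theorem \ref{thm:derivations of W(f)}, via the order-of-vanishing Lemma \ref{lem:order of vanishing} --- an argument that is available only once rationality of $g$ is known.) If you wish to keep your degree-by-degree scheme, you could adopt the same relaxation and subtract $\lambda\ad_{e_N}$ instead of $\ad_{\frac{h_0}{f}\del}$, at the cost of inheriting the paper's need to make sense of a possibly non-terminating sum; otherwise you must actually prove the lower bound on $\deg(d)$, which I expect to be no easier than the proposition you are trying to prove.
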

\begin{proof}
    Consider $\gr(d) \in \Der(W_{\geq n},\W)$. By Proposition \ref{prop:associated graded derivation}, $\gr(d) = \lambda_0 \ad_{e_{k_0}}$ for some $\lambda_0 \in \kk$, $k_0 \in \ZZ$. Now let $d_1 = d - \lambda_0\ad_{e_{k_0}} \in \Der(\W(f),W)$ and note that $\deg(d_1) < \deg(d)$. Similarly to before, $\gr(d_1) = \lambda_1 \ad_{e_{k_1}}$ for some $\lambda_1 \in \kk$ and $k_1 \in \ZZ$, so we can consider $d_2 = d_1 - \lambda_1 \ad_{e_{k_1}} \in \Der(\W(f),W)$ with $\deg(d_2) < \deg(d_1)$. Continuing inductively, we get $d_1,d_2,d_3,\ldots \in \Der(\W(f),W)$ such that $\deg(d) > \deg(d_1) > \deg(d_2) > \deg(d_3) > \ldots$, and
    $$d = d_{m+1} + \sum_{i=0}^m \lambda_i \ad_{e_{k_i}},$$
    for all $m \in \NN$, where we set $\lambda_i = 0$ if $d_i = 0$. Let $g = \sum_{i=0}^\infty \lambda_i t^{k_i + 1} \in \kk((t^{-1}))$ and consider $\ad_{g\del} \in \Der(\kk((t^{-1}))\del)$, so that $d = \restr{\ad_{g\del}}{\W(f)}$. In other words, $[g\del,hf\del] \in \W$ for all $h \in \kk[t]$. Hence, $[g\del,f\del] \in \W$, so $f'g - fg' \in \kk[t]$, and thus
    $$tf'g - tfg' \in \kk[t].$$
    Similarly, $[g\del,tf\del] \in \W$, so
    $$fg + tf'g - tfg' \in \kk[t].$$
    Combining the above, we conclude that $h \coloneqq fg \in \kk[t]$, so $g = \frac{h}{f} \in \kk(t)$.
\end{proof}

If $d \in \Der(\W(f))$, then Proposition \ref{prop:derivations into W} tells us that $d = \ad_{g\del}$ for some $g \in \kk(t)$. For the proof of Theorem \ref{thm:derivations of W(f)}, we need to show that $g \in \rad(f)\kk[t]$, in other words, that $g \in \kk[t]$ and that $g$ vanishes at all the roots of $f$. In order to achieve this, we will consider orders of vanishing of $[f\del,g\del]$ at roots of $f$. The following easy lemma computes $\ord_\xi([f\del,g\del])$ for $\xi \in \kk$.

\begin{lem}\label{lem:order of vanishing}
    Let $\xi \in \kk$, $f \in \kk[t] \nonzero$, and $g \in \kk(t) \nonzero$. Then
    $$\ord_\xi([f\del,g\del]) = \ord_\xi(f) + \ord_\xi(g) - 1$$
    if $\ord_\xi(g) \neq \ord_\xi(f)$ and $\ord_\xi([f\del,g\del]) \geq 2\ord_\xi(f)$ if $\ord_\xi(g) = \ord_\xi(f)$.
\end{lem}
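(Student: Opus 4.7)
The lemma is essentially a direct computation in local coordinates at $\xi$. The plan is to expand $[f\del, g\del] = (fg' - f'g)\del$ using a local uniformizer and read off the leading order. Setting $a = \ord_\xi(f)$ and $b = \ord_\xi(g)$, I would first use the definition of order to write
$$f = (t - \xi)^a \tilde{f}, \qquad g = (t - \xi)^b \tilde{g},$$
where $\tilde{f}, \tilde{g} \in \kk(t)$ are units in the local ring of $\kk(t)$ at $\xi$, so in particular $\ord_\xi(\tilde{f}) = \ord_\xi(\tilde{g}) = 0$ and $\tilde{f}(\xi)\tilde{g}(\xi) \neq 0$.

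Next I would differentiate via the Leibniz rule to get
$$f' = a(t - \xi)^{a - 1}\tilde{f} + (t - \xi)^a \tilde{f}', \qquad g' = b(t - \xi)^{b - 1}\tilde{g} + (t - \xi)^b \tilde{g}',$$
and substitute directly into $fg' - f'g$. After collecting powers of $(t - \xi)$, the two $(t-\xi)^{a+b-1}$-contributions from the ``differentiated'' pieces produce a $(b - a)$ factor, while the remaining contributions are visibly of order at least $a + b$. In other words,
$$fg' - f'g = (b - a)(t - \xi)^{a + b - 1}\tilde{f}\tilde{g} + (t - \xi)^{a + b}\bigl(\tilde{f}\tilde{g}' - \tilde{f}'\tilde{g}\bigr).$$

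From this both claims are immediate. If $a \neq b$, the coefficient $(b - a)\tilde{f}\tilde{g}$ is nonzero at $\xi$, so $\ord_\xi(fg' - f'g) = a + b - 1$, which gives the first assertion after translating back via $\ord_\xi([f\del,g\del]) = \ord_\xi(fg' - f'g)$. If $a = b$, the leading term vanishes identically and only terms of order $\geq a + b = 2\ord_\xi(f)$ survive, giving the second assertion. There is no real obstacle: the only point that needs care is that $a$ or $b$ may be zero or negative (since $g$ can have a pole), so the manipulations should be carried out in the local ring at $\xi$ (equivalently in $\kk((t - \xi))$) rather than in $\kk[t]$, where the characterisation of units as rational functions of order $0$ at $\xi$ makes the non-vanishing statements automatic.
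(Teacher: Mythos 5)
Your proof is correct and is essentially the same as the paper's: the paper likewise writes $f = (t-\xi)^k p$ and $g = (t-\xi)^\ell q$ with $\ord_\xi(p) = \ord_\xi(q) = 0$, expands $fg' - f'g = (t-\xi)^{k+\ell-1}\bigl((\ell-k)pq + (t-\xi)(pq'-p'q)\bigr)$, and reads off the order from whether $(\ell-k)pq$ vanishes at $\xi$. Your remark about working in the local ring to handle negative orders of $g$ is the right precaution and is implicit in the paper's argument as well.
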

\begin{proof}
    Write $f = (t - \xi)^k p$ and $g = (t - \xi)^\ell q$, where $p \in \kk[t], q \in \kk(t)$ and $\ord_\xi(p) = \ord_\xi(q) = 0$, so that $k = \ord_\xi(f)$ and $\ell = \ord_\xi(g)$. Differentiating $f$ and $g$, we get
    $$f' = (t - \xi)^{k-1}(kp + (t - \xi)p'), \quad g' = (t - \xi)^{\ell-1}(\ell q + (t - \xi)q').$$
    Therefore,
    $$[f\del,g\del] = (fg' - f'g)\del = (t - \xi)^{k+\ell-1}((\ell - k)pq + (t - \xi)(pq' - p'q)).$$
    Let $r = (\ell - k)pq + (t - \xi)(pq' - p'q)$. It is clear that $\ord_\xi(r) = 0$ if $k \neq \ell$. Therefore,
    $$\ord_\xi(fg' - f'g) = \ord_\xi((t - \xi)^{k+\ell-1}r) = k + \ell - 1$$
    if $k \neq \ell$. If $k = \ell$, then $\ord_\xi(r) \geq 1$, so
    $$\ord_\xi(fg' - f'g) = \ord_\xi((t - \xi)^{2k-1}r) \geq 2k,$$
    which concludes the proof.
\end{proof}

We are now ready to prove Theorem \ref{thm:derivations of W(f)}.

\begin{proof}[Proof of Theorem \ref{thm:derivations of W(f)}]
    If $w \in \W(\rad(f))$, then by Lemma \ref{lem:order of vanishing}, it follows that $\ad_w \in \Der(\W(f))$.
    
    Let $d \in \Der(\W(f))$. By Proposition \ref{prop:derivations into W}, we know that $d = \ad_{\frac{h}{f}\del}$ for some $h \in \kk[t]$. Let $g = \frac{h}{f}$. Since $[g\del,f\del] \in \W(f)$, $f$ divides $f'g - fg'$. Therefore,
    $$\ord_\xi(f'g - fg') \geq \ord_\xi(f)$$
    for all $\xi \in V(f)$. Assume, for a contradiction, that $\ord_\xi(g) \leq 0$ for some $\xi \in V(f)$. Then, by Lemma \ref{lem:order of vanishing},
    $$\ord_\xi(f) \leq \ord_\xi(f'g - fg') = \ord_\xi(f) + \ord_\xi(g) - 1 \leq \ord_\xi(f) - 1,$$
    a contradiction. Therefore, $\ord_\xi(g) \geq 1$ for all $\xi \in V(f)$. It follows that $g \in \kk[t]$ (since $g$ can only have poles at roots of $f$), and that $\rad(f)$ divides $g$, so $g\del \in \W(\rad(f))$.
\end{proof}

\section{Extensions of submodule-subalgebras}\label{sec:extensions}

Having computed $\Ext_{U(\W(f))}^1(M,\W(f))$ for $M = \kk$, we move on to the computation of the $\Ext$ group when $M$ is an arbitrary one-dimensional representation of $\W(f)$. We may view an element of $\Ext_{U(\W(f))}^1(M,\W(f))$ as a non-split short exact sequence of $U(\W(f))$-modules
\beq\label{eq:SES one-dim extension}
    0 \to \W(f) \to X \to M \to 0.
\eeq
Although it is not immediately obvious, this computation is closely related to the results from previous sections on derivations. This is because $M$ is trivial as a representation of $\W(f^2) = [\W(f),\W(f)]$ (see Lemma \ref{lem:derived subalgebra}). Therefore, regarding \eqref{eq:SES one-dim extension} as a sequence of $U(\W(f^2))$-modules, we get an element of
$$\Ext_{U(\W(f^2))}^1(\kk,\W(f)) \cong \Der(\W(f^2),\W(f))/\Inn(\W(f^2),\W(f))$$
(recall the isomorphism $\Ext_{U(L)}^1(\kk,M) \cong \Der(L,M)/\Inn(L,M)$ from the beginning of Section \ref{sec:graded derivations}). We can use Proposition \ref{prop:derivations into W} to compute $\Der(\W(f^2),\W(f))$, which then allows us to determine the structure of $X$ as a representation of $\W(f^2)$. This will give a lot of information into the structure of $X$ as a representation of $\W(f)$.

First of all, it is straightforward to show that $X$ has the structure of a Lie algebra in a natural way.

\begin{lem}\label{lem:SES gives Lie algebra}
    Let $0 \to L \to \overline{L} \xrightarrow{\pi} M \to 0$ be a short exact sequence of $U(L)$-modules, where $L$ is a Lie algebra and $\dim(M) = 1$. Writing $\overline{L} = L \oplus \kk x$ as a vector space (where $\pi(\kk x) = M$), there is a unique Lie bracket on $\overline{L}$ extending the action of $L$ on $\overline{L}$. This is given by $[u,v]_{\overline{L}} = [u,v]_L$ and $[u,x]_{\overline{L}} = u \cdot x$ for $u,v \in L$.
\end{lem}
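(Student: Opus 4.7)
The plan is to define the bracket on $\overline{L} = L \oplus \kk x$ explicitly by bilinearity and antisymmetry: set $[u,v]_{\overline{L}} := [u,v]_L$ for $u,v \in L$, then $[u, x]_{\overline{L}} := u \cdot x$ and $[x,u]_{\overline{L}} := -u \cdot x$ for $u \in L$, together with $[x,x]_{\overline{L}} := 0$, and extend $\kk$-bilinearly. Antisymmetry and bilinearity are built into the definition. The compatibility with the $U(L)$-action on $\overline{L}$ is essentially immediate: for $u \in L$ and $z = v + \beta x \in \overline{L}$, one computes
$$[u, z]_{\overline{L}} = [u, v]_L + \beta (u \cdot x) = u \cdot v + \beta (u \cdot x) = u \cdot z,$$
where the identification $[u,v]_L = u \cdot v$ uses that $L$ is a sub-$U(L)$-module of $\overline{L}$ and that the $U(L)$-module structure on $L$ is the adjoint action (which is the implicit hypothesis of the short exact sequence).

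The main work is verifying the Jacobi identity. The Jacobian $J(a,b,c) := [[a,b],c]_{\overline{L}} + [[b,c],a]_{\overline{L}} + [[c,a],b]_{\overline{L}}$ vanishes whenever two of its entries coincide (by antisymmetry), hence it is an alternating trilinear form. It therefore suffices to evaluate it on triples of distinct elements of a $\kk$-basis of $\overline{L}$ consisting of a basis of $L$ together with $x$. When all three entries lie in $L$, Jacobi holds by the Lie bracket on $L$. The only remaining case is $a = u$, $b = v \in L$, $c = x$, where a direct expansion gives
$$J(u,v,x) = [u,v]_L \cdot x - u \cdot (v \cdot x) + v \cdot (u \cdot x).$$
This is zero precisely because $\overline{L}$ is a $U(L)$-module, so the action of $[u,v]_L$ equals the commutator of the actions of $u$ and $v$. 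Thus $\overline{L}$ is a Lie algebra.

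For uniqueness, suppose $[\cdot,\cdot]'$ is any Lie bracket on $\overline{L}$ restricting to $[\cdot,\cdot]_L$ on $L$ such that the induced adjoint action of $L$ on $\overline{L}$ coincides with the given $U(L)$-module action. Then for $u \in L$ we must have $[u, x]' = u \cdot x$, and combined with bilinearity and antisymmetry this determines the bracket on the spanning set $L \cup \{x\}$, hence on all of $\overline{L}$. The only step requiring any real thought is the reduction of Jacobi to the mixed case and its identification with the $U(L)$-module axiom; every other step is forced by the definitions.
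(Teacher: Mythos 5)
Your proposal is correct and follows essentially the same route as the paper: the only substantive step in both is reducing the Jacobi identity to the mixed case $J(u,v,x)$ with $u,v \in L$ and identifying its vanishing with the $U(L)$-module axiom $[u,v]\cdot x = u\cdot(v\cdot x) - v\cdot(u\cdot x)$. The paper's proof is just a terser version of yours, omitting the explicit uniqueness and compatibility checks that you (correctly) spell out.
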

\begin{proof}
    It suffices to check that
    \beq\label{eq:Jacobi for one-dim extension}
        [u,[v,x]] + [v,[x,u]] + [x,[u,v]] = 0
    \eeq
    for $u,v \in L$. We have
    $$[[u,v],x] = [u,v] \cdot x = u \cdot (v \cdot x) - v \cdot (u \cdot x),$$
    since $\overline{L}$ is a $U(L)$-module. Therefore,
    $$[[u,v],x] = [u,[v,x]] - [v,[u,x]],$$
    so \eqref{eq:Jacobi for one-dim extension} holds.
\end{proof}

Lemma \ref{lem:SES gives Lie algebra} says that studying the group $\Ext_{U(\W(f))}^1(\W(f),M)$, where $M$ is a one-dimensional $U(\W(f))$-module, is essentially the same as studying Lie algebras $\overline{L}$ containing $\W(f)$ such that $\dim(\overline{L}/\W(f)) = 1$. Thus, we make the following definition.

\begin{dfn}
    Let $L$ be a Lie algebra. A \emph{one-dimensional extension} of $L$ is a Lie algebra $\overline{L}$ containing $L$ such that $\dim(\overline{L}/L) = 1$ and the short exact sequence of $U(L)$-modules
    $$0 \to L \to \overline{L} \to \overline{L}/L \to 0$$
    is non-split.
\end{dfn}

We therefore consider one-dimensional extensions of $\W(f)$, where $f \in \kk[t] \nonzero$. There are two types of ``obvious" one-dimensional extensions of $\W(f)$. Firstly, suppose $f$ is non-reduced, so that there exists $w \in \W(\rad(f)) \setminus \W(f)$. Since $\W(f)$ is an ideal of $\W(\rad(f))$, it follows that $\overline{L} \coloneqq \W(f) \oplus \kk w$ is a one-dimensional extension of $\W(f)$. Note that $\overline{L}/\W(f)$ is trivial as a representation of $\W(f)$, since $\W(f)$ is an ideal of $\overline{L}$. Hence, this construction corresponds to an element of $\Ext_{U(\W(f))}^1(\kk,\W(f))$.

The other ``obvious" one-dimensional extensions come from removing non-repeated roots of $f$. In other words, if 
$\xi \in V(f)$ with $\ord_\xi(f) = 1$, then $\W(\frac{f}{t - \xi})$ is a one-dimensional extension of $\W(f)$ which is not contained in $\W(\rad(f))$. In this case, the quotient $\W(\frac{f}{t - \xi})/\W(f)$ is isomorphic to $\W/\W(t - \xi)$ as a $U(\W(f))$-module, where the module structure on $\W/\W(t - \xi)$ is induced from $\W(f)$ acting on $\W$ and $\W(t - \xi)$, noting that $\W(f)$ is a subalgebra of both $\W$ and $\W(t - \xi)$. Therefore, this corresponds to an element of
$$\Ext_{U(\W(f))}^1(\W/\W(t - \xi),\W(f)).$$
The main goal of this section is to prove that all one-dimensional extensions of $\W(f)$ are one of these two types.

\begin{thm}\label{thm:one-dimensional extension}
    Let $f \in \kk[t] \nonzero$ and let $M$ be a one-dimensional $U(\W(f))$-module. If $M \cong \W/\W(t - \xi)$ for some $\xi \in V(f)$ with $\ord_\xi(f) = 1$, then
    $$\dim(\Ext_{U(\W(f))}^1(\W/\W(t - \xi),\W(f))) = 1.$$
    In this case, the unique non-split extension of $\W/\W(t - \xi)$ by $\W(f)$ is $\W(\frac{f}{t - \xi})$. If $M \cong \kk$ is trivial, we have
    $$\dim(\Ext_{U(\W(f))}^1(\kk,\W(f))) = \deg(f) - \deg(\rad(f)).$$
    Otherwise, $\Ext_{U(\W(f))}^1(M,\W(f)) = 0$.
\end{thm}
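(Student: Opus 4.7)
The plan is the one sketched at the start of Section \ref{sec:extensions}: pass through the restriction to $\W(f^2) = [\W(f), \W(f)]$. Given an extension $0 \to \W(f) \to X \to M \to 0$ with $\dim M = 1$, Lemma \ref{lem:SES gives Lie algebra} makes $X = \W(f) \oplus \kk x$ into a Lie algebra, and since $M$ is one-dimensional, $\W(f^2)$ acts trivially on $M$ (to be recorded as Lemma \ref{lem:derived subalgebra}). Restricting to $U(\W(f^2))$-modules yields an element of $\Ext^1_{U(\W(f^2))}(\kk,\W(f)) \cong \Der(\W(f^2),\W(f))/\Inn(\W(f^2),\W(f))$, represented by the derivation $u \mapsto u \cdot x$. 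By Proposition \ref{prop:derivations into W}, this derivation equals $\ad_{g\del}|_{\W(f^2)}$ for some $g \in \kk(t)$; imposing that its image lies in $\W(f)$, a local computation at the roots of $f$ using Lemma \ref{lem:order of vanishing} forces $g = q/f$ with $q \in \rad(f)\kk[t]$.

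This gives a canonical linear embedding $X \hookrightarrow \kk(t)\del$ sending $x \mapsto (q/f)\del$, and compatibility with the full $\W(f)$-bracket must then be checked. The hypothesis $\rad(f) \mid q$ makes $qf'/f$ a polynomial, so a direct computation of $[hf\del,(q/f)\del]$ shows this bracket already lies in $\W$; this is Proposition \ref{prop:extensions are in W}, stating that $X$ embeds into $\W$. The problem is thus reduced to classifying subalgebras $L \subseteq \W$ with $\W(f) \subseteq L$ and $\dim(L/\W(f)) = 1$. Writing $L = \W(f) \oplus \kk \cdot p\del$ with $p \in \kk[t] \setminus f\kk[t]$, the subalgebra condition unwinds to $p(hf' + \chi(hf\del)) \in f\kk[t]$ for all $h \in \kk[t]$, where $\chi$ is the character describing $L/\W(f)$.

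The remaining local analysis examines each $\xi \in V(f)$ in terms of $k = \ord_\xi(f)$ and $m = \ord_\xi(p)$: the constraint is vacuous when $m \geq k$; it forces $\chi(hf\del) = 0$ when $1 \leq m < k$; it forces $\chi(hf\del) = -h(\xi)f'(\xi)$ when $m = 0$ and $k = 1$; and it admits no solution when $m = 0$ and $k \geq 2$. Gluing these local conditions, the only surviving characters are the trivial one (with $p \in \rad(f)\kk[t]/f\kk[t]$, giving the $(\deg(f) - \deg(\rad(f)))$-dimensional family) and the characters $\chi_\xi$ for simple roots $\xi$ of $f$ (each yielding the unique extension $\W(f/(t - \xi))$, which a direct check identifies with the $U(\W(f))$-module $\W/\W(t - \xi)$). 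The main obstacle is the bookkeeping in this local-to-global step, particularly verifying that $m = 0$ with $k \geq 2$ truly obstructs the extension and that distinct simple roots give distinct characters $\chi_\xi$; the rest reduces to polynomial arithmetic.
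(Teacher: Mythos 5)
Your overall strategy is the paper's: restrict the extension to $\W(f^2) = [\W(f),\W(f)]$, use Proposition \ref{prop:derivations into W} to realise the resulting derivation as $\ad_{g\del}$ with $g \in \kk(t)$, embed the extension into $\W$, and then classify the subalgebras of $\W$ containing $\W(f)$ with one-dimensional quotient by a local analysis at the roots of $f$. Your endgame is correct and, if anything, more systematically organised than the paper's: the case division on $k = \ord_\xi(f)$ and $m = \ord_\xi(p)$, the resulting dichotomy between the trivial character (with $p \in \rad(f)\kk[t]$) and the characters $\chi_\xi$ attached to simple roots, and the incompatibility of $\chi_{\xi_1}$ with $\chi_{\xi_2}$ for $\xi_1 \neq \xi_2$ all check out and recover the statement.

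The gap is in the middle step, where you pass from ``$x$ acts on $\W(f^2)$ as $\ad_{(q/f)\del}$'' to ``$X$ embeds into $\W$''. You rightly flag that compatibility with the full $\W(f)$-bracket must be checked, but the check you offer --- that $[hf\del,(q/f)\del] \in \W$ because $\rad(f) \mid q$ forces $qf'/f \in \kk[t]$ --- does not do this. First, the identity $[u,x] = [u,(q/f)\del]$ is a priori known only for $u \in \W(f^2)$; for $u \in \W(f) \setminus \W(f^2)$ one has $[u,x] = b(u) + \chi(u)x$ with $b(u) \in \W(f)$, and nothing you have said shows that $b(u) + \chi(u)(q/f)\del$ equals $[u,(q/f)\del]$, which is what the equivariance of $x \mapsto (q/f)\del$ requires. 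The paper closes this with a separate argument in the proof of Proposition \ref{prop:extensions are in W}: the Jacobi identity gives $[v,[u,x-g\del]] = 0$ for all $v \in \W(f^2)$, and since the centraliser of a nonzero element of $\kk(t)\del$ is one-dimensional, the offending discrepancy must vanish. Second, even granting equivariance, $\rad(f) \mid q$ does not put $(q/f)\del$ in $\W$: at a root with $\ord_\xi(f) = k \geq 2$ your constraint only yields $\ord_\xi(q/f) \geq 1-k$, so $g = q/f$ may still have poles, and the observation that the brackets $[hf\del,g\del]$ land in $\W$ says nothing about $g\del$ itself. Ruling out these poles is the second half of the paper's proof of Proposition \ref{prop:extensions are in W} (the argument with the sets $S$ and $\overline{S}$ of orders of vanishing). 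If your intention was simply to invoke Proposition \ref{prop:extensions are in W} as a black box --- which is exactly what the paper's own proof of the theorem does --- then the rest of your argument goes through; but as written, the justification you supply for that step is not a proof of it.
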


The first step in proving Theorem \ref{thm:one-dimensional extension} is computing the derived subalgebra of $\W(f)$. This will be used to deduce that any one-dimensional representation of $\W(f)$ is trivial when viewed as a representation of $\W(f^2)$.

\begin{lem}\label{lem:derived subalgebra}
    For $f \in \kk[t]$, the derived subalgebra of $\W(f)$ is $\W(f^2)$.
\end{lem}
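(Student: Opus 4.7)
The plan is a direct computation followed by a spanning argument. For any $p, q \in \kk[t]$, I would expand the bracket via the Leibniz rule:
$$[fp\partial, fq\partial] = \bigl(fp(fq)' - (fp)'fq\bigr)\partial.$$
Expanding $(fq)' = f'q + fq'$ and $(fp)' = f'p + fp'$, the cross terms $ff'pq$ cancel, leaving
$$[fp\partial, fq\partial] = f^2(pq' - p'q)\partial \in \W(f^2).$$
This immediately yields $[\W(f), \W(f)] \subseteq \W(f^2)$.

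For the reverse inclusion, I need to show that the ``Wronskian'' expressions $\{pq' - p'q \mid p,q \in \kk[t]\}$ span all of $\kk[t]$. The cleanest way is to take $p = 1$ and $q = \frac{1}{n+1}t^{n+1}$ for $n \geq 0$, which gives $pq' - p'q = t^n$. Correspondingly,
$$\bigl[f\partial,\, \tfrac{1}{n+1}f t^{n+1}\partial\bigr] = f^2 t^n \partial.$$
Since these brackets lie in $[\W(f), \W(f)]$ and span $\W(f^2) = f^2\kk[t]\partial$ as a $\kk$-vector space, we conclude $\W(f^2) \subseteq [\W(f), \W(f)]$, finishing the proof.

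There is no real obstacle here; the argument is an elementary calculation together with the observation that every monomial appears as a Wronskian of two monomials. The only mild subtlety is remembering to work with $p, q \in \kk[t]$ rather than in $\W(f)$ itself, so that the factor of $f^2$ emerges cleanly from the cancellation of cross terms.
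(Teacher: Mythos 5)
Your proof is correct and follows the same route as the paper's, which simply records the identity $[gf\del,hf\del] = (gh'-g'h)f^2\del$ and leaves the rest implicit. Your explicit verification that every monomial $t^n$ arises as a Wronskian $pq'-p'q$ (via $p=1$, $q=\tfrac{1}{n+1}t^{n+1}$) fills in the spanning step that the paper omits.
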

\begin{proof}
    The result follows by noting that $[gf\del,hf\del] = (gh' - g'h)f^2\del$ for $g,h \in \kk[t]$.
\end{proof}

\begin{rem}\label{rem:residually nilpotent}
    For a Lie algebra $L$, write $L_{(0)} = L$ and $L_{(n+1)} = [L,L_{(n)}]$ for the lower central series of $L$, where $n \in \NN$. We say that a Lie algebra $L$ is \emph{residually nilpotent} if $\bigcap_{n \in \NN} L_{(n)} = 0$.

    Since we computed $\W(f)_{(1)}$ in Lemma \ref{lem:derived subalgebra}, one might ask about further terms in the lower central series of $\W(f)$. It is not difficult to show that
    $$\W(f)_{(2)} = \W(\gcd(f',f)f^2)$$
    for $f \in \kk[t]$. Note that $\gcd(f',f) = 1$ if and only if $f$ is reduced, so we have $\W(f)_{(2)} = \W(f^2) = \W(f)_{(1)}$ and therefore $\W(f)_{(n)} = \W(f^2)$ for all $n \geq 1$. On the other hand, when $f \in \kk[t]$ is not reduced, one can easily show that $\W(f)$ is residually nilpotent.
\end{rem}

For the next step, we will need a result from \cite{PetukhovSierra}. Note that the result in \cite{PetukhovSierra} considers subalgebras of $W$, but a similar result is true for subalgebras of $\W$ with a nearly identical proof.

\begin{prop}[{\cite[Proposition 3.2.7]{PetukhovSierra}}]\label{prop:Alexey}
    Let $L$ be a subalgebra of $\W$ of finite codimension. Then there exists a monic polynomial $f \in \kk[t] \nonzero$ such that
    $$\W(f) \subseteq L \subseteq \W(\rad(f)).$$
    Furthermore, if we assume $f$ is of minimal degree, then such $f$ is unique.
\end{prop}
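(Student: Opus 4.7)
The plan is to analyse the ideal
\[ I = \{g \in \kk[t] \mid g\W \subseteq L\} \subseteq \kk[t], \]
which is readily seen to be an ideal of $\kk[t]$. Since $\kk[t]$ is a PID, once we know $I \neq 0$ it is generated by a unique monic polynomial $f$, which is automatically the unique monic polynomial of minimum degree with $\W(f) \subseteq L$. The proposition therefore reduces to two tasks: (i) showing $I \neq 0$; (ii) showing $L \subseteq \W(\rad(f))$ for this $f$.

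Task (ii) I would dispatch by a short local argument at each root of $f$. Suppose, for contradiction, that $q\partial \in L$ with $q(\xi) \neq 0$ for some $\xi \in V(f)$. For every $g \in \kk[t]$,
\[ [q\partial, fg\partial] = \bigl(f(qg' - q'g) + qf'g\bigr)\partial \in L, \]
and the summand $f(qg' - q'g)\partial$ already lies in $\W(f) \subseteq L$, so $qf'g\partial \in L$ for every $g$. Hence $qf'\W \subseteq L$, so $qf' \in I$, so $f \mid qf'$. Writing $f = (t - \xi)^k \tilde f$ with $k = \ord_\xi(f) \geq 1$ and $\tilde f(\xi) \neq 0$, one checks (using $\Char \kk = 0$) that $\ord_\xi(f') = k - 1$; combined with $\ord_\xi(q) = 0$, this gives $\ord_\xi(qf') = k - 1 < k = \ord_\xi(f)$, contradicting $f \mid qf'$. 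Uniqueness of the minimum-degree monic $f$ is then automatic from the principal ideal structure of $I$.

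For task (i), the plan is to combine finite codimension with the Lie structure. The key tool is the adjoint action: for any $u \in L$, $\ad_u$ preserves $L$ and hence descends to an endomorphism of the finite-dimensional quotient $\W/L$; by Cayley--Hamilton there is a polynomial $p_u \in \kk[x]$ with $p_u(\ad_u)(\W) \subseteq L$. When $e_0 \in L$ this is especially sharp: $\ad_{e_0}$ acts diagonally on $\W$ with $\ad_{e_0}(e_n) = ne_n$, so $p_{e_0}(n) e_n \in L$ for all $n$; since $p_{e_0}$ has only finitely many roots, $e_n \in L$ for all large $n$, whence $t^N\W \subseteq L$ for some $N$. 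In general, one first uses the bound $\codim_{W_{\geq n}}(L \cap W_{\geq n}) \leq \codim_\W L$ to produce, for each sufficiently large $n$, an element $v_n \in L$ whose bottom-filtration term is $e_n$; one then constructs a suitable element $u \in L$ via Lie brackets of the $v_n$ and reruns the Cayley--Hamilton argument with that $u$ in place of $e_0$, separating eigenspaces via a Vandermonde step.

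The main obstacle is task (i). The subtlety is that $L$ is only a Lie subalgebra, not a $\kk[t]$-submodule of $\W$: there exist finite-codimension $\kk$-subspaces of $\W$ that contain no nonzero $\kk[t]$-submodule, so the Lie structure must be invoked essentially to propagate a single element of $L$ into an entire $\kk[t]$-orbit $f\W \subseteq L$. Task (ii) is by comparison a clean local computation at the roots of $f$, and is what I would tackle first.
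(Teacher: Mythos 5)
First, a remark on the comparison itself: the paper does not prove this proposition — it is imported verbatim from \cite[Proposition 3.2.7]{PetukhovSierra} with only the comment that the two-sided proof adapts — so there is no in-paper argument to measure your proposal against, and I am assessing it on its own terms. Your framing via the ideal $I = \{g \in \kk[t] \mid g\W \subseteq L\}$ is sound: $I$ is indeed an ideal, its monic generator is automatically the unique minimal-degree $f$, and your task (ii) is a correct, complete argument — the identity $[q\del, fg\del] = \bigl(f(qg'-q'g) + qf'g\bigr)\del$ really does force $qf' \in I$, and the order count $\ord_\xi(qf') = \ord_\xi(f) - 1$ (valid in characteristic $0$) gives the contradiction. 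You also correctly identify that task (i) is where the Lie structure must enter.

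The genuine gap is that task (i) — which is the actual content of the cited proposition — is not proved by your sketch. The Cayley--Hamilton argument works for $e_0$ only because $\ad_{e_0}$ is semisimple on all of $\W$ with eigenbasis $(e_n)$ and distinct integer eigenvalues. A finite-codimension subalgebra need contain no element with semisimple adjoint action: $W_{\geq 1}$ and $\W((t-\xi)^2)$ contain no element of the form $\lambda(t-\mu)\del$, and since the bracket of two elements of positive degree again has positive degree, no amount of bracketing your elements $v_n$ can manufacture such a $u$. For $u = h\del$ of positive degree, $\ad_u$ strictly raises the degree filtration on $\W$, so it has no eigenvectors in $\W$ at all (outside $\kk u$); there are no eigenspaces to separate and no Vandermonde system to invert. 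Cayley--Hamilton still yields $p_u(\ad_u)(\W) \subseteq L$, but the left-hand side is the image of an order-$\deg(p_u)$ differential operator (one computes $\ad_{h\del}^k(g\del) = h\,D^k(g/h)\del$ with $D = h\,\tfrac{d}{dt}$), which is not a $\kk[t]$-submodule of $\W$ and does not visibly contain one. Even in the favourable case where some $(t-\xi)\del$ lies in $L$, the argument only produces $f$ a power of $t-\xi$, whereas in general $f$ has several distinct roots. So the core assertion — that a finite-codimension Lie subalgebra of $\W$ must contain a nonzero $\kk[t]$-submodule — remains unestablished. If you want a Lie-theoretic foothold that genuinely propagates elements, note that $[p\del, pq\del] = p^2q'\del$, so a single $p\del \in L$ forces $p^2\{q' \mid pq\del \in L\}\del \subseteq L$, where $\{q \mid pq\del \in L\}$ has codimension at most $\codim_\W(L)$; but upgrading this to an ideal still requires real work, and for the purposes of this paper the honest course is to cite \cite{PetukhovSierra} for this step.
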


The following result implies that all one-dimensional extensions of $\W(f)$ are contained in $\W$. Note that the proposition considers arbitrary subalgebras of $\W$ of finite codimension, which is more difficult than restricting to submodule-subalgebras, but will be useful for Sections \ref{sec:isomorphism} and \ref{sec:universal}.

\begin{prop}\label{prop:extensions are in W}
    Let $L$ be a Lie subalgebra of $\W$ of finite codimension, and let $\overline{L}$ be a one-dimensional extension of $L$. Then $\overline{L}$ can be uniquely embedded in $\W$ such that the diagram
    \begin{center}
        \begin{tikzcd}
            L \arrow[d, hook] \arrow[r, hook] & \W \\
            \overline{L} \arrow[ru, hook]     &   
        \end{tikzcd}
    \end{center}
    commutes.
\end{prop}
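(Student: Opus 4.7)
The plan is to apply Proposition \ref{prop:Alexey} to fix $f \in \kk[t] \nonzero$ with $\W(f) \subseteq L \subseteq \W(\rad(f))$, realise $\overline{L}$ as a Lie algebra via Lemma \ref{lem:SES gives Lie algebra}, and pick a lift $x \in \overline{L} \setminus L$. Writing $u \cdot x = \chi(u) x + \beta(u)$ for $u \in L$ gives a character $\chi \colon L \to \kk$ (killing $[L,L]$) and a $\chi$-twisted derivation $\beta \colon L \to L$ arising from Jacobi. By Lemma \ref{lem:derived subalgebra}, $\W(f^2) = [\W(f), \W(f)] \subseteq [L, L]$, so $\chi$ vanishes on $\W(f^2)$ and $\beta|_{\W(f^2)}$ is a genuine derivation $\W(f^2) \to L \subseteq \W$. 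Proposition \ref{prop:derivations into W}, combined with the easy fact that the centraliser of $\W(f^2)$ in $\kk(t)\del$ is zero, then produces a unique $w_0 \in \kk(t)\del$ with $\beta(u) = [u, w_0]$ for all $u \in \W(f^2)$.

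I would then define $\iota \colon \overline{L} \to \kk(t)\del$ to be the inclusion on $L$ with $\iota(x) = w_0$, and verify that $\iota$ is a Lie algebra homomorphism. Specifically, $\gamma(u) := [u, w_0] - \chi(u) w_0 - \beta(u)$ centralises $\W(f^2)$ for every $u \in L$ (by Jacobi in $\kk(t)\del$ together with the twisted derivation identity for $\beta$), hence $\gamma \equiv 0$. Non-splitness of the extension forces $w_0 \notin L$: otherwise $\kk(x - w_0)$ would be a $U(L)$-submodule of $\overline{L}$ mapping isomorphically to $\overline{L}/L$, splitting the sequence. Uniqueness of the resulting embedding is then immediate: any other $\iota'$ would satisfy the same defining equations and the difference $\iota(x) - \iota'(x)$ would centralise $\W(f^2)$.

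The hard part is showing $w_0 \in \W$. Suppose for contradiction that $w_0$ has a pole of order $k \geq 1$ at some $\xi \in V(f)$, and set $n := \ord_\xi(f)$. A direct Laurent expansion at $\xi$ (in the spirit of Lemma \ref{lem:order of vanishing}) shows that for any $u \in L$ with $\ord_\xi(u) = m$, the element $[u, w_0]$ has $\ord_\xi$ exactly $m - k - 1$ with nonzero leading coefficient (proportional to $k + m$). Matching orders against $\chi(u) w_0 + \beta(u)$ forces the dichotomy: either $\chi(u) \neq 0$ and $m = 1$, or $\chi(u) = 0$ and $\beta(u) \in L$ has $\ord_\xi$ equal to $m - k - 1$. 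In particular $L$ contains no elements of $\ord_\xi \in \{2, \dots, k+1\}$. Iterating $\beta$ from $u_0 := f\del \in \W(f)$, which has $\ord_\xi(u_0) = n \geq 2$ and $\chi(u_0) = 0$, produces $u_i := \beta(u_{i-1}) \in L$ with $\ord_\xi(u_i) = n - i(k+1)$; this sequence strictly decreases by $k+1$ and must avoid both the forbidden range $\{2, \dots, k+1\}$ and values $\leq 0$, so it must land exactly on $1$, forcing $(k+1) \mid (n-1)$. Running the identical argument from $(t - \xi) f \del \in \W(f)$, of order $n+1$, yields $(k+1) \mid n$. Hence $(k+1) \mid \gcd(n-1, n) = 1$, contradicting $k \geq 1$. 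Thus $w_0 \in \W$ and $\iota$ has image in $\W$.
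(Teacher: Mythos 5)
Your proposal is correct and follows essentially the same route as the paper's proof: sandwich $L$ between $\W(f)$ and $\W(\rad(f))$, use triviality of $\overline{L}/L$ over $[L,L] \supseteq \W(f^2)$ together with Proposition \ref{prop:derivations into W} to realise the action of $x$ on $\W(f^2)$ as $\ad_{w_0}$ for a rational vector field $w_0$, extend this to all of $L$ via Jacobi and the vanishing of the centraliser of $\W(f^2)$ in $\kk(t)\del$, use non-splitness to get $w_0 \notin L$, and finally exclude poles of $w_0$ at roots of $f$ by arithmetic on orders of vanishing via Lemma \ref{lem:order of vanishing}. The only genuine divergence is the endgame: where the paper shows that the set $\{\ord_\xi(w) \mid w \in L\}$ misses an entire residue class modulo $k+1$ while containing $\ZZ_{\geq \ord_\xi(f)}$, you derive the incompatible divisibilities $(k+1) \mid (n-1)$ and $(k+1) \mid n$ by iterating $\beta$ on $f\del$ and $(t-\xi)f\del$ --- an equally valid variant, though you should note that the unexplained assertions $n \geq 2$ and $\chi(f\del) = 0$ do require a word of justification (both follow from your forbidden-range observation applied to the elements $(t-\xi)^j f\del \in \W(f)$, which forces $n \geq k+2$).
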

\begin{proof}
    By Proposition \ref{prop:Alexey}, there exists $f \in \kk[t] \nonzero$ such that
    $$\W(f) \subseteq L \subseteq \W(\rad(f)),$$
    since $L$ has finite codimension in $\W$. Write $\overline{L} = L \oplus \kk x$ as a vector space. Since $\overline{L}/L$ is a one-dimensional representation of $L$, it follows that $\overline{L}/L$ is trivial as a representation of $[L,L]$. Hence, $[x,[L,L]] \subseteq L$. Since $\W(f) \subseteq L$, we know that $\W(f^2) \subseteq [L,L]$. Therefore,
    $$[x,\W(f^2)] \subseteq L \subseteq \W(\rad(f)).$$
    Hence, $\restr{\ad_x}{\W(f^2)} \in \Der(\W(f^2),\W(\rad(f)))$. By Proposition \ref{prop:derivations into W}, we have $\ad_x = \ad_{\frac{h}{f^2}\del}$ on $\W(f^2)$, for some $h \in \kk[t]$. Let $g = \frac{h}{f^2}$, so that $[w,x] = [w,g\del]$ for all $w \in \W(f^2)$. Note that $\W(f)$ and $\W(f^2)$ are ideals of $L$, since $L \subseteq \W(\rad(f))$.

    Let $u \in L \nonzero, v \in \W(f^2) \nonzero$. We have
    \beq\label{eq:Jacobi on extension}
        [v,[u,x]] = [u,[v,x]] - [[u,v],x] = [u,[v,g\del]] - [[u,v],g\del] = [v,[u,g\del]],
    \eeq
    where we used that $[u,v] \in \W(f^2)$ in the second equality.
    
    Let $M = \overline{L}/L$ as a $U(L)$-module. We can regard the short exact sequence
    \beq\label{eq:SES with L}
        0 \to L \to \overline{L} \to M \to 0
    \eeq
    as an element of $\Ext_{U(L)}^1(M,L)$ and consider the map
    $$\varphi \colon \Ext_{U(L)}^1(M,L) \to \Ext_{U(L)}^1(M,\kk(t)\del),$$
    where we view $L$ as a subalgebra of $\kk(t)\del$. Under the map $\varphi$, the short exact sequence \eqref{eq:SES with L} gets mapped to
    \beq\label{eq:SES with X}
        0 \to \kk(t)\del \to X \to M \to 0,
    \eeq
    where $X$ is the $U(L)$-module $\kk(t)\del \oplus \kk x$ with $L$ acting by its adjoint action on $\kk(t)\del$ and the action of $L$ on $x$ agreeing with the bracket on $\overline{L}$. We claim that \eqref{eq:SES with X} splits, in other words, that the image of \eqref{eq:SES with L} under $\varphi$ is zero. This will then allow us to conclude that $\overline{L}$ can be embedded into $\kk(t)\del$.
    
    Let $y = x - g\del \in X$. By \eqref{eq:Jacobi on extension}, we have $[v,[u,y]] = 0$ for all $u \in L \nonzero$, $v \in \W(f^2) \nonzero$. There exist $\kk$-linear maps $a \colon L \to \kk(t)\del$ and $\lambda \colon L \to \kk$ such that
    $$[u,y] = a(u) + \lambda(u)x$$
    for all $u \in L$. We can also write
    $$[u,x] = b(u) + \lambda(u)x$$
    for all $u \in L$, where $b(u) = a(u) + [u,g\del] \in L$. Therefore,
    $$0 = [v,[u,y]] = [v,a(u) + \lambda(u)x] = [v,a(u) + \lambda(u)g\del],$$
    where in the final equality we used that $[v,x] = [v,g\del]$ for all $v \in \W(f^2)$. It follows that $a(u) + \lambda(u)g\del \in \kk(t)\del$ is a scalar multiple of $v$, since the centraliser of any nonzero element of $\kk(t)\del$ is one-dimensional. But this must hold for all $v \in \W(f^2) \nonzero$, so $a(u) + \lambda(u)g\del = 0$. Now, we have
    \beq\label{eq:SES with X splits}
        [u,y] = a(u) + \lambda(u)x = a(u) + \lambda(u)x - (a(u) + \lambda(u)g\del) = \lambda(u)y,
    \eeq
    for all $u \in L$. Letting $\overline{x}$ be the image of $x$ in $M$, the map
    \begin{align*}
        s \colon M &\to X \\
        \overline{x} &\mapsto y
    \end{align*}
    is a section of \eqref{eq:SES with X}, so \eqref{eq:SES with X} splits, as claimed.

    Suppose $g\del \in L$. Then $y \in \overline{L}$, so we could take the codomain of $s$ to be $\overline{L}$. In this case, $s \colon M \to \overline{L}$ is a section of \eqref{eq:SES with L}. However, we assumed that \eqref{eq:SES with L} is non-split, so this is a contradiction. Hence, we must have $g\del \not\in L$, and therefore $\overline{L} \cong L \oplus \kk g\del$. Without loss of generality, we make the identification $\overline{L} = L \oplus g\del$, so we view $\overline{L}$ as a subalgebra of $\kk(t)\del$.
    
    We claim that $\ord_\xi(g) \geq 0$ for all $\xi \in V(f)$. Since $g = \frac{h}{f^2}$ can only have poles at roots of $f$, this will imply that $g \in \kk[t]$. In other words, this will show that $\overline{L} \subseteq \W$, which is enough to finish the proof.

    Take $\xi \in V(f)$ and suppose $\ord_\xi(g) < 0$. Define sets $S = \{\ord_\xi(w) \mid w \in L\}$ and $\overline{S} = \{\ord_\xi(w) \mid w \in \overline{L}\}$. Let $n_0 = \min(S)$, and let $w_0 \in L$ such that $\ord_\xi(w_0) = n_0$. If $w = u + \alpha g\del \in \overline{L}$, where $u \in L$ and $\alpha \in \kk$, then either $\ord_\xi(w) = \ord_\xi(g)$ if $\alpha \neq 0$, or $\ord_\xi(w) \geq \ord_\xi(w_0)$ if $\alpha = 0$. Therefore, $\overline{S} = S \cup \{\ord_\xi(g)\}$. Furthermore, since $\W(f) \subseteq L$, we know that $S$ contains $\ZZ_{\geq \ord_\xi(f)}$.
    
    Since $[w_0,g\del] \in \overline{L}$, we know that $\ord_\xi([w_0,g\del]) \in \overline{S} = S \cup \{\ord_\xi(g)\}$. By Lemma \ref{lem:order of vanishing}, we have
    $$\ord_\xi([w_0,g\del]) = \ord_\xi(w_0) + \ord_\xi(g) - 1 < n_0.$$
    By minimality of $n_0$, $\ord_\xi([w_0,g\del]) \not\in S$, so we must have that $\ord_\xi([w_0,g\del]) = \ord_\xi(g)$. Therefore, $n_0 = \ord_\xi(w_0) = 1$.

    Let $n = 1 - \ord_\xi(g) \in \NN$. Then $\ord_\xi([w,g\del]) = \ord_\xi(w) - n$ for all $w \in L$, by Lemma \ref{lem:order of vanishing}. Let $m \in S$ be such that $m \geq 2$, and let $w \in L$ such that $\ord_\xi(w) = m$. Then
    $$\ord_\xi([w,g\del]) = m - n \geq 2 - n = \ord_\xi(g) + 1.$$
    Since $\ord_\xi([w,g\del]) \in \overline{S} = S \cup \{\ord_\xi(g)\}$ and $\ord_\xi([w,g\del]) \neq \ord_\xi(g)$, we must have that $\ord_\xi([w,g\del]) \in S$. Hence, $m - n \in S$ for all $m \in S$ such that $m \geq 2$. In particular, this means that if $m \in S$ such that $m \geq 2$, then $m$ must be larger than $n$, since $\min(S) = 1$. Noting that $n = 1 - \ord_\xi(g) \geq 2$, we therefore see that $2 \not\in S$. 
    
    Applying the above inductively, we deduce that $m - kn \in S$ for all $m \in S, k \in \NN$ such that $m > kn$. Since $2 \not\in S$, it follows that $S$ does not contain any elements which are 2 modulo $n$. However, this contradicts $\ZZ_{\geq \ord_\xi(f)} \subseteq S$. It follows that $\ord_\xi(g) \geq 0$ for all $\xi \in V(f)$, as claimed.
\end{proof}

A special case of Proposition \ref{prop:extensions are in W} is a description of derivations of arbitrary subalgebras of $\W$ of finite codimension.

\begin{cor}\label{cor:derivations}
    Let $L$ be a subalgebra of $\W$ of finite codimension. Then
    $$\Der(L) = \{\ad_w \mid w \in \W \text{ such that } \ad_w(L) \subseteq L\}.$$
    In other words, derivations of $L$ are restrictions of derivations of $\W$. Consequently, $H^1(L;L) \cong N_{\W}(L)/L$, where $N_{\W}(L)$ is the normaliser of $L$ in $\W$. \qed
\end{cor}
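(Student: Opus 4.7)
The plan is to deduce Corollary \ref{cor:derivations} directly from Proposition \ref{prop:extensions are in W} via the standard correspondence between derivations and one-dimensional extensions recalled in \eqref{eq:derivation gives extension}.

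Given a derivation $d \in \Der(L)$, I would form the Lie algebra $\overline{L} = L \oplus \kk x$ whose bracket extends that of $L$ by $[u,x] = -d(u)$ for $u \in L$; the Jacobi identity for this bracket is equivalent to $d$ being a derivation, as in Lemma \ref{lem:SES gives Lie algebra}. If $d \notin \Inn(L)$, then the short exact sequence $0 \to L \to \overline{L} \to \overline{L}/L \to 0$ is non-split, so $\overline{L}$ is a one-dimensional extension of $L$. Proposition \ref{prop:extensions are in W} then embeds $\overline{L}$ uniquely into $\W$ over $L$, identifying $x$ with some $w \in \W$ for which $[u,w] = -d(u)$; that is, $d = \restr{\ad_w}{L}$ and $\ad_w(L) \subseteq L$. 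If instead $d \in \Inn(L)$, then $d = \restr{\ad_u}{L}$ for some $u \in L \subseteq \W$, yielding the same form directly.

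For the isomorphism $H^1(L;L) \cong N_{\W}(L)/L$, I would consider the map $\psi \colon N_{\W}(L) \to \Der(L)$ defined by $\psi(w) = \restr{\ad_w}{L}$. Surjectivity of $\psi$ is exactly what the previous paragraph establishes, and $\psi(L) = \Inn(L)$. The kernel of $\psi$ is the centraliser $C_{\W}(L)$ of $L$ in $\W$. Since $[p\del,q\del] = (pq' - p'q)\del = 0$ in $\kk(t)\del$ forces $q$ to be a scalar multiple of $p$, the centraliser in $\kk(t)\del$ of any nonzero element is one-dimensional, so the intersection of such lines over the infinite-dimensional space $L$ is zero. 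Hence $\psi$ descends to the desired isomorphism $N_{\W}(L)/L \cong H^1(L;L)$.

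The serious work has already been done in Proposition \ref{prop:extensions are in W}; the only routine verification remaining is the triviality of $C_{\W}(L)$, which follows immediately from the structure of commuting pairs in $\kk(t)\del$.
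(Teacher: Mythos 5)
Your proposal is correct and follows exactly the route the paper intends: the corollary is stated as a special case of Proposition \ref{prop:extensions are in W}, obtained by converting a non-inner derivation $d$ into the non-split one-dimensional extension $L \oplus \kk x$ with trivial quotient module and then embedding it in $\W$. Your additional verification that $C_{\W}(L) = 0$ (via the one-dimensionality of centralisers in $\kk(t)\del$ and the infinite-dimensionality of $L$) is precisely the routine step needed to pass from surjectivity of $w \mapsto \restr{\ad_w}{L}$ to the isomorphism $H^1(L;L) \cong N_{\W}(L)/L$.
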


Having shown that all one-dimensional extensions of $\W(f)$ are contained in $\W$, we can now explicitly compute them to prove Theorem \ref{thm:one-dimensional extension}.

\begin{proof}[Proof of Theorem \ref{thm:one-dimensional extension}]
    If $M \cong \kk$ then the result follows by Theorem \ref{thm:derivations of W(f)}. So, assume $M$ is nontrivial as a representation of $\W(f)$.

    Let $0 \to \W(f) \to \overline{L} \to M \to 0$ be a non-split short exact sequence of representations of $\W(f)$. By Proposition \ref{prop:extensions are in W}, we may assume that $\overline{L} \subseteq \W$. It suffices to show that $\overline{L} = \W(\frac{f}{t - \xi})$ for some $\xi \in V(f)$ with $\ord_\xi(f) = 1$.

    Write $\overline{L} = \W(f) \oplus \kk g\del$ as a vector space, where $g \in \kk[t] \nonzero$. If $g\del \in \W(\rad(f))$, then $\overline{L} \subseteq \W(\rad(f))$. So, suppose $g\del \not\in \W(\rad(f))$, in other words, there exists $\xi \in V(f)$ such that $g(\xi) \neq 0$. Note that, if $\lambda \in \kk \nonzero$ and $p \in \kk[t]$, then $\ord_\xi(pf + \lambda g) = 0$, while $\ord_\xi(pf) \geq \ord_\xi(f)$. It follows that if $w \in \overline{L}$, then $\ord_\xi(w) = 0$ or $\ord_\xi(w) \geq \ord_\xi(f)$. Note that $[f\del,g\del] \in \overline{L} \nonzero$, and
    $$\ord_\xi([f\del,g\del]) = \ord_\xi(f) + \ord_\xi(g) - 1 = \ord_\xi(f) - 1,$$
    by Lemma \ref{lem:order of vanishing}. It follows that $\ord_\xi([f\del,g\del]) = 0$, so $\ord_\xi(f) = 1$.

    For $k \in \NN$, write $[t^kf\del,g\del] = w_k + \lambda_k g\del$, where $w_k \in \W(f)$ and $\lambda_k \in \kk$. Note that
    $$[t^kf\del,g\del] = (t^kfg' - kt^{k-1}fg - t^kf'g)\del.$$
    Therefore, $w_k = (t^kfg' - kt^{k-1}fg - t^kf'g)\del - \lambda_k g\del$. Evaluating $w_k$ at $\xi$, we get
    $$-\xi^kf'(\xi)g(\xi) - \lambda_k g(\xi) = 0,$$
    since $w_k \in \W(f)$ vanishes at $\xi$. Hence, $\lambda_k = -\xi^k f'(\xi)$ for all $k \in \NN$, since $g(\xi) \neq 0$.

    The only assumptions we made on $\xi$ were that $\xi \in V(f)$ and $g(\xi) \neq 0$. There is thus only one such $\xi$, and in particular $g(\xi') = 0$ for all $\xi' \in V(f) \setminus \{\xi\}$.

    To end the proof, we claim that $g\del \in \W(\frac{f}{t - \xi})$, so that $\overline{L} \subseteq \W(\frac{f}{t - \xi})$. In particular, since $\W(\frac{f}{t - \xi})$ is a one-dimensional extension of $\W(f)$, this will immediately imply that $\overline{L} = \W(\frac{f}{t - \xi})$. In order to prove that $g\del \in \W(\frac{f}{t - \xi})$, it suffices to show that $\ord_\mu(g) \geq \ord_\mu(f)$ for all $\mu \in V(f) \setminus \{\xi\}$.
    
    We have already shown that $\ord_\mu(g) \geq 1$ for all $\mu \in V(f) \setminus \{\xi\}$, so if $\ord_\mu(f) = 1$ then $\ord_\mu(g) \geq \ord_\mu(f)$. So, suppose $\ord_\mu(f) \geq 2$ and $\ord_\mu(g) < \ord_\mu(f)$ for some $\mu \in V(f) \setminus \{\xi\}$. Recall that
    $$[f\del,g\del] = w_0 + \lambda_0 g\del,$$
    where $w_0 \in \W(f)$ and $\lambda_0 = -f'(\xi) \neq 0$. Therefore, $\ord_\mu([f\del,g\del]) = \ord_\mu(g)$. On the other hand, Lemma \ref{lem:order of vanishing} says that
    $$\ord_\mu([f\del,g\del]) = \ord_\mu(f) + \ord_\mu(g) - 1 \geq \ord_\mu(g) + 1,$$
    since $\ord_\mu(f) \geq 2$, a contradiction. Hence, $\ord_\mu(g) \geq \ord_\mu(f)$ for all $\mu \in V(f) \setminus \{\xi\}$, as claimed.
\end{proof}

The following corollary, which follows immediately from Theorem \ref{thm:one-dimensional extension}, summarises the two types of one-dimensional extensions which $\W(f)$ has.

\begin{cor}
    Suppose $\overline{L}$ is a one-dimensional extension of $\W(f)$, where $f \in \kk[t] \nonzero$. Then either $\overline{L} \subseteq \W(\rad(f))$ or $\overline{L} = \W(\frac{f}{t - \xi})$, where $\xi \in V(f)$ with $\ord_\xi(f) = 1$. \qed
\end{cor}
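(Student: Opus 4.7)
The plan is to read the corollary directly off Theorem \ref{thm:one-dimensional extension}, translating the $\Ext$-theoretic language into a statement about concrete subalgebras. Given a one-dimensional extension $\overline{L}$ of $\W(f)$, set $M \coloneqq \overline{L}/\W(f)$, a one-dimensional $U(\W(f))$-module. The defining short exact sequence represents a nonzero class in $\Ext_{U(\W(f))}^1(M,\W(f))$, so Theorem \ref{thm:one-dimensional extension} forces $M$ to lie in one of the two families for which this Ext group is nonzero: either $M \cong \kk$, or $M \cong \W/\W(t - \xi)$ for some $\xi \in V(f)$ with $\ord_\xi(f) = 1$.

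In the first case, I would invoke Proposition \ref{prop:extensions are in W} to embed $\overline{L}$ inside $\W$, writing $\overline{L} = \W(f) \oplus \kk g\del$ for some $g\del \in \W$. Triviality of $M$ says exactly that $[\W(f), g\del] \subseteq \W(f)$, so $\restr{\ad_{g\del}}{\W(f)} \in \Der(\W(f))$. Theorem \ref{thm:derivations of W(f)} then produces $w \in \W(\rad(f))$ with $\ad_{g\del} = \ad_w$ on $\W(f)$; since the centraliser of $\W(f)$ in $\kk(t)\del$ is trivial (any $h\del$ commuting with $f\del$ must vanish), this equality promotes to $g\del = w$, giving $\overline{L} \subseteq \W(\rad(f))$.

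In the second case, Theorem \ref{thm:one-dimensional extension} exhibits $\W(\tfrac{f}{t - \xi})$ as \emph{the} unique non-split extension with quotient $\W/\W(t - \xi)$, so every such extension is equivalent to this one. The uniqueness clause of Proposition \ref{prop:extensions are in W} then upgrades the equivalence of extensions to actual equality of subalgebras of $\W$, yielding $\overline{L} = \W(\tfrac{f}{t - \xi})$.

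There is no substantive obstacle here: the corollary is essentially a repackaging of Theorem \ref{thm:one-dimensional extension}, moving from isomorphism classes of extensions to explicit subalgebras. The only point that requires a moment's care is the final upgrade from equivalence class to subalgebra equality, and this is precisely what Proposition \ref{prop:extensions are in W} was built to handle.
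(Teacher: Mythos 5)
Your proposal is correct and matches the paper's (unstated) reasoning: the corollary is recorded as an immediate consequence of Theorem \ref{thm:one-dimensional extension}, and your unpacking via Proposition \ref{prop:extensions are in W} and Theorem \ref{thm:derivations of W(f)} is exactly the intended route. One small correction to a parenthetical: commuting with $f\del$ alone does not force $h\del$ to vanish (the centraliser of $f\del$ is $\kk f\del$); what you need, and what is true, is that the centraliser of all of $\W(f)$ in $\kk(t)\del$ is zero, e.g.\ because $[h\del,f\del]=0$ and $[h\del,tf\del]=0$ together force $hf=0$ and hence $h=0$.
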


\section{Isomorphisms between subalgebras of finite codimension}\label{sec:isomorphism}

As an application of our results on extensions of subalgebras of $\W$, we study isomorphisms between subalgebras of $\W$ of finite codimension. We prove that any isomorphism between subalgebras of finite codimension extends to an automorphism of $\W$, which allows us to classify submodule-subalgebras of $\W$ up to isomorphism and compute automorphism groups of submodule-subalgebras. This will be done by an inductive argument: if we have an isomorphism $\varphi \colon L_1 \to L_2$ between subalgebras of $\W$, we will show that $\varphi$ extends to an isomorphism between one-dimensional extensions of $L_1$ and $L_2$.

\begin{thm}\label{thm:extending isomorphisms}
    Let $L_1$ and $L_2$ be subalgebras of $\W$ of finite codimension and suppose there is an isomorphism of Lie algebras $\varphi \colon L_1 \to L_2$. Then $\varphi$ extends to an automorphism of $\W$.
\end{thm}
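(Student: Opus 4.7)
The plan is to proceed by induction on $k = \codim_{\W}(L_1)$. The base case $k = 0$ is immediate: $L_1 = L_2 = \W$, and $\varphi$ is already an automorphism. For the inductive step with $k \geq 1$, I aim to produce one-dimensional extensions $\overline{L}_i \subseteq \W$ of $L_i$ (in the paper's sense, i.e., non-split) for $i = 1,2$ together with a Lie algebra isomorphism $\overline{\varphi} \colon \overline{L}_1 \to \overline{L}_2$ extending $\varphi$. Since $\codim_{\W}(\overline{L}_1) = k - 1$, the induction hypothesis applied to $\overline{\varphi}$ then provides an automorphism of $\W$ that extends $\overline{\varphi}$, and hence $\varphi$.

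The main obstacle is constructing a suitable $\overline{L}_1 \subseteq \W$. By Proposition \ref{prop:Alexey}, fix $f \in \kk[t]$ of degree at least $1$ with $\W(f) \subseteq L_1 \subseteq \W(\rad(f))$. A direct computation gives $[\W(f^2), \W] \subseteq \W(f) \subseteq L_1$, so the adjoint action of $L_1$ on the finite-dimensional space $\W/L_1$ factors through $L_1/\W(f^2)$. Iterating Lemma \ref{lem:derived subalgebra}, the $n$-th derived subalgebra of $\W(\rad(f))$ equals $\W(\rad(f)^{2^n})$, which lies in $\W(f^2)$ as soon as $2^n$ exceeds the multiplicities of the roots of $f$; thus $\W(\rad(f))/\W(f^2)$, and therefore its subalgebra $L_1/\W(f^2)$, is solvable. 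Lie's theorem then provides a common eigenvector in $\W/L_1$, giving $v \in \W \setminus L_1$ with $[L_1, v] \subseteq L_1 + \kk v$, so $\overline{L}_1 := L_1 + \kk v$ is a codimension-one subalgebra of $\W$ containing $L_1$. To see the resulting sequence $0 \to L_1 \to \overline{L}_1 \to \overline{L}_1/L_1 \to 0$ is non-split, a splitting would produce $y \in \overline{L}_1 \setminus L_1$ with $[u, y] = \mu(u) y$ for some character $\mu$ of $L_1$. Any such character annihilates $[L_1, L_1] \supseteq \W(f^2)$, so $y$ would centralise $\W(f^2)$; but in $\kk(t)\del$ the centraliser of any nonzero element is one-dimensional, forcing $y \in \kk \cdot f^2\del \subseteq L_1$, a contradiction.

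With $\overline{L}_1 = L_1 \oplus \kk v$ in hand, I transport the Lie bracket along $\varphi$ to define $\overline{L}_2^{\mathrm{abs}} := L_2 \oplus \kk w$ with $[\varphi(u), w] := \varphi([u,v])$ for $u \in L_1$ (and the existing bracket on $L_2$). This is a one-dimensional extension of $L_2$ as an abstract Lie algebra, since its extension class corresponds, via the isomorphism $\varphi$, to the non-split class of $\overline{L}_1$ over $L_1$. Proposition \ref{prop:extensions are in W} now embeds $\overline{L}_2^{\mathrm{abs}}$ uniquely into $\W$ extending $L_2 \hookrightarrow \W$; let $\overline{L}_2$ be its image. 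The map $\overline{\varphi} \colon \overline{L}_1 \to \overline{L}_2$ sending $u + \alpha v \mapsto \varphi(u) + \alpha w$ is then a Lie algebra isomorphism extending $\varphi$, and applying the induction hypothesis to $\overline{\varphi}$ completes the proof.
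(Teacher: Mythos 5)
Your proof is correct and follows essentially the same route as the paper: induct on the codimension, produce a non-split one-dimensional extension $\overline{L}_1 \subseteq \W$ of $L_1$, regard it via $\varphi$ as an abstract one-dimensional extension of $L_2$, and embed that in $\W$ using Proposition \ref{prop:extensions are in W}. The only difference is that you construct $\overline{L}_1$ by hand (solvability of $L_1/\W(f^2)$ plus Lie's theorem applied to $\W/L_1$, then ruling out a splitting via centralisers in $\kk(t)\del$), whereas the paper invokes Proposition \ref{prop:one-dim extensions exist}, whose proof through Lemma \ref{lem:L/I is solvable} and Corollary \ref{cor:irreps are one-dim} rests on exactly the same mechanism.
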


In particular, if $f \in \kk[t] \nonzero$, then Theorem \ref{thm:extending isomorphisms} implies that automorphisms of $\W(f)$ are restrictions of automorphisms of $\W$. This is similar to Theorem \ref{thm:derivations of W(f)}, which says that derivations of $\W(f)$ are restrictions of derivations of $\W$.

Theorem \ref{thm:extending isomorphisms} also implies that if $L_1$ and $L_2$ are isomorphic subalgebras of $\W$ of finite codimension, then $\codim_{\W}(L_1) = \codim_{\W}(L_2)$, which is not obvious a priori.

\begin{rem}
    Theorem \ref{thm:extending isomorphisms} is not true for arbitrary infinite-dimensional subalgebras of $\W$. Taking $f \in \kk[t]$ of degree at least 2, consider the Lie algebra $L(f)$ from \cite{Buzaglo} (see Notation \ref{ntt:L(f,g)}). In particular, $L(f)$ has infinite codimension in $\W$, and there exists $h \in \kk[t]$ such that $L(f) \cong \W(h)$, by \cite[Lemma 4.12]{Buzaglo}. However, this isomorphism clearly does not extend to an automorphism of $\W$. In fact, it does not even extend to an endomorphism of $\W$, since any nonzero endomorphism of $\W$ is an automorphism \cite{Du}.
\end{rem}

If we have a submodule-subalgebra $\W(f)$, where $f \in \kk[t]$ is non-constant, then we know that $\W(f)$ has one-dimensional extensions (for example, we can take $\W(\frac{f}{t - \xi})$, where $f(\xi) = 0$). However, it is not immediately clear whether an arbitrary subalgebra of $\W$ of finite codimension has any one-dimensional extensions. Our next goal is to show that any such subalgebra has one-dimensional extensions. We will requite notation for the derived series of a Lie algebra.

\begin{ntt}
    For a Lie algebra $L$, let $D(L) = [L,L]$. We write $D^0(L) = L$ and 
    $$D^{n+1}(L) = D(D^{n}(L)) = [D^n(L),D^n(L)]$$ for the derived series of $L$, where $n \in \NN$.
\end{ntt}

\begin{rem}
    Analogously to residual nilpotence, we say a Lie algebra $L$ is \emph{residually solvable} if $\bigcap_{n \in \NN} D^n(L) = 0$. By Lemma \ref{lem:derived subalgebra}, we can see that if $f \in \kk[t]$ is non-constant, then $\W(f)$ is residually solvable.

    Note that residual solvability is, in some sense, a weak property. For example, a free Lie algebra is residually solvable. On the other hand, $\W$ is not residually solvable, since $D^n(\W) = \W$ for all $n \in \NN$.
\end{rem}

In fact, we will see that $\W(f)$ satisfies a stronger property defined below, provided $f \in \kk[t]$ is non-constant.

\begin{dfn}
    We say that a Lie algebra $L$ is \emph{strongly residually solvable} if $L$ is residually solvable and every finite-dimensional quotient of $L$ is solvable.
\end{dfn}

Given a subalgebra $L \subseteq \W$ of finite codimension, the next result shows that $L$ is strongly residually solvable. Lie's theorem will then allow us to deduce that all finite-dimensional irreducible representations of $L$ are one-dimensional.

\begin{lem}\label{lem:L/I is solvable}
    If $L$ is a proper subalgebra of $\W$ of finite codimension then $L$ is strongly residually solvable.
\end{lem}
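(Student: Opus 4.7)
The plan is to apply Proposition \ref{prop:Alexey} to sandwich $L$ between $\W(f)$ and $\W(\rad(f))$ for some non-constant $f \in \kk[t]$ (non-constant because $L$ is proper), and then verify the two halves of strong residual solvability separately.

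For residual solvability, iterating Lemma \ref{lem:derived subalgebra} yields $D^n(\W(h)) = \W(h^{2^n})$ for any $h \in \kk[t]$; applied to $h = \rad(f)$ this gives $D^n(L) \subseteq D^n(\W(\rad(f))) = \W(\rad(f)^{2^n})$. Since $\rad(f)$ is non-constant, the degrees blow up, so $\bigcap_n \W(\rad(f)^{2^n}) = 0$ and hence $\bigcap_n D^n(L) = 0$.

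For the finite-dimensional quotient condition, I would first note (via a short divisibility check, using $\rad(f)^2/f \in \kk[t]$) that $\W(f)$ is an ideal of $\W(\rad(f))$, hence of $L$. Given an ideal $I$ of $L$ with $L/I$ finite-dimensional, the short exact sequence
$$0 \to (\W(f) + I)/I \to L/I \to L/(\W(f) + I) \to 0$$
reduces the task to showing both end terms are solvable. The right term is a quotient of $L/\W(f) \subseteq \W(\rad(f))/\W(f)$, and this ambient quotient is solvable by the same derived-series computation: $D^n$ of it is the image of $\W(\rad(f)^{2^n})$, which lies in $\W(f)$ as soon as $2^n$ exceeds the largest root multiplicity of $f$.

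The left term equals $\W(f)/J$ with $J := \W(f) \cap I$, a finite-codimension ideal of $\W(f)$. The core of the proof is the claim that every finite-codimension ideal $J$ of $\W(f)$ contains $\W(f^n)$ for some $n$; granted this, $\W(f)/J$ is a quotient of $\W(f)/\W(f^n)$, whose $D^m$ vanishes once $2^m \geq n$. To prove the claim I would apply Proposition \ref{prop:Alexey} to $J$ to obtain $\W(p) \subseteq J \subseteq \W(\rad(p))$ with $f \mid p$, and then show that every root of $p$ is in fact a root of $f$ (so that $p \mid f^n$ for large $n$). If $\xi$ were a root of $p$ with $f(\xi) \neq 0$, then every $v \in J$ would satisfy $\ord_\xi(v) \geq 1$ from the containment $J \subseteq \W(\lcm(f, \rad(p)))$, while bracketing with $w := f\del \in \W(f)$ stays inside $J$ and, by Lemma \ref{lem:order of vanishing}, drops $\ord_\xi$ by exactly one; the bracket is nonzero since $[w,v] = 0$ would force $v \in \kk\, f\del$, contradicting $\ord_\xi(v) \geq 1$. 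Iterating forces $\ord_\xi(v)$ to be arbitrarily large for every nonzero $v \in J$, yielding $J = 0$ and contradicting $\W(p) \subseteq J$. The main obstacle I expect is precisely this last order-of-vanishing induction: one has to verify carefully that the iterated brackets both remain inside $J$ and stay nonzero, which is where the ideal property of $J$ combines with $f(\xi) \neq 0$.
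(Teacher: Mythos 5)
Your proof is correct, and it uses the same three ingredients as the paper's: Proposition \ref{prop:Alexey} to sandwich a finite-codimension subalgebra between $\W(p)$ and $\W(\rad(p))$, the derived-series computation $D^n(\W(h)) = \W(h^{2^n})$ coming from Lemma \ref{lem:derived subalgebra}, and the order-of-vanishing trick (bracketing with $f\del$ strictly lowers $\ord_\xi$ at any putative root $\xi$ of the sandwiching polynomial that is not a root of $f$, while staying inside the ideal) to rule out extra roots. The difference is organizational: you pass through the short exact sequence $0 \to (\W(f)+I)/I \to L/I \to L/(\W(f)+I) \to 0$ and apply the sandwich argument to $J = \W(f) \cap I$ inside $\W(f)$, whereas the paper applies Proposition \ref{prop:Alexey} directly to $I$ itself (a finite-codimension subalgebra of $\W$), obtains $\W(g) \subseteq I \subseteq \W(\rad(g))$, shows $\rad(g) = \rad(f)$ by the same minimal-order contradiction, and then gets $D^m(L) \subseteq \W(\rad(f)^{2^m}) \subseteq \W(g) \subseteq I$ in a single step. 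Your extra decomposition is sound but not needed; running your root-comparison argument on $I$ directly collapses the two cases into one and saves the appeal to ``extension of solvable by solvable is solvable.''
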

\begin{proof}
    Let $I$ be an ideal of $L$ of finite codimension. By Proposition \ref{prop:Alexey}, there exist $f,g \in \kk[t] \nonzero$ such that
    $$\W(f) \subseteq L \subseteq \W(\rad(f)), \quad \W(g) \subseteq I \subseteq \W(\rad(g)).$$
    Since $\W(\rad(f))$ is residually solvable, it follows that $L$ is also residually solvable. It remains to show that $L/I$ is solvable.
    
    Note that $\W(g) \subseteq I \subseteq L \subseteq \W(\rad(f))$, so $\rad(f)$ divides $\rad(g)$. Suppose $\rad(g) \neq \rad(f)$. Then there exists $\xi \in V(g) \setminus V(f)$. Let $w \in I \nonzero$ such that $\ord_\xi(w)$ is minimal. Then $[f\del,w] \in I$, since $f\del \in L$ and $I$ is an ideal of $L$. By Lemma \ref{lem:order of vanishing},
    $$\ord_\xi([f\del,w]) = \ord_\xi(f) + \ord_\xi(w) - 1 = \ord_\xi(w) - 1,$$
    contradicting the minimality of $\ord_\xi(w)$. Therefore, $\rad(g) = \rad(f)$. In particular, this means that there exists $n \in \NN$ such that $g$ divides $\rad(f)^n$, in other words, we have $\W(\rad(f)^n) \subseteq \W(g)$. We know that the derived series of $\W(\rad(f))$ is
    $$\W(\rad(f)) \supseteq \W(\rad(f)^2) \supseteq \W(\rad(f)^4) \supseteq \W(\rad(f)^8) \supseteq \ldots.$$
    Since $L \subseteq \W(\rad(f))$, we have $D^k(L) \subseteq D^k(\rad(f)) = \W(\rad(f)^{2^k})$. Let $m \in \NN$ such that $2^m \geq n$, so that $\W(\rad(f)^{2^m}) \subseteq \W(\rad(f)^n) \subseteq \W(g)$. It follows that $D^m(L) \subseteq \W(g) \subseteq I$. Hence, $L/I$ is solvable.
\end{proof}

\begin{cor}\label{cor:irreps are one-dim}
    Let $L$ be a subalgebra of $\W$ of finite codimension. Then all finite-dimensional irreducible representations of $L$ are one-dimensional.
\end{cor}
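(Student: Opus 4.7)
The plan is to invoke Lie's theorem after reducing the statement to a finite-dimensional quotient. Given a finite-dimensional irreducible representation $\rho \colon L \to \mf{gl}(V)$, I would set $I = \Ker(\rho)$. Because $L/I \cong \rho(L) \subseteq \mf{gl}(V)$ is finite-dimensional, $I$ is an ideal of $L$ of finite codimension, so $L/I$ is precisely the kind of finite-dimensional quotient addressed by Lemma \ref{lem:L/I is solvable}.

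In the main case, where $L$ is a proper subalgebra of $\W$, Lemma \ref{lem:L/I is solvable} asserts that every finite-dimensional quotient of $L$ is solvable; in particular, $L/I$ is solvable. Since $\kk$ is algebraically closed of characteristic $0$, Lie's theorem then produces a common eigenvector for $\rho(L)$ in $V$, whose span is a one-dimensional $L$-subrepresentation. Irreducibility of $V$ forces $\dim V = 1$, as desired.

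The remaining case $L = \W$ is not directly covered by Lemma \ref{lem:L/I is solvable} (the lemma requires $L$ to be proper), so I would dispatch it separately by observing that $\W$ is simple. Indeed, given a nonzero ideal $I \subseteq \W$ and $0 \neq v \in I$, applying powers of $\ad_{e_0}$ (which acts semisimply on $\W$ with distinct eigenvalues on the $e_i$) and taking linear combinations isolates each homogeneous component of $v$, so $e_k \in I$ for some $k \geq -1$; then iterated brackets with $e_{-1}$ push $e_k$ down to $e_{-1}$, and further brackets of the form $[e_{n+1}, e_{-1}]$ with $n \geq -1$ recover every $e_n$, so $I = \W$. Consequently any nontrivial finite-dimensional representation of $\W$ would embed the infinite-dimensional $\W$ into $\mf{gl}(V)$, which is impossible, and hence the only finite-dimensional irreducible representation of $\W$ is the trivial one-dimensional one. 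No serious obstacle remains: the real content is packaged in Lemma \ref{lem:L/I is solvable}, after which the corollary follows formally from Lie's theorem together with the simplicity of $\W$ for the ambient case.
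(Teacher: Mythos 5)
Your proof is correct and follows essentially the same route as the paper: pass to the finite-dimensional quotient $L/\Ker(\rho)$ (the paper writes $\Ann(M)$ for this kernel), apply Lemma \ref{lem:L/I is solvable} to get solvability, and conclude by Lie's theorem. The one difference is that you separately treat $L = \W$, which Lemma \ref{lem:L/I is solvable} excludes by requiring $L$ to be proper, via the simplicity of $\W$; the paper's proof glosses over this edge case, so your extra care is a small but genuine improvement.
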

\begin{proof}
    Let $M$ be a finite-dimensional irreducible representation of $L$. Then $\Ann(M) = \{w \in L \mid w \cdot M = 0\}$ is an ideal of $L$ of codimension at most $\dim(M)^2 < \infty$, since $\Ann(M)$ is the kernel of the map
    \begin{align*}
        \rho \colon L &\to \mf{gl}(M) \\
        w &\mapsto \rho_w,
    \end{align*}
    where $\rho_w(m) = w \cdot m$ for $m \in M$. Therefore, $L/\Ann(M)$ is solvable by Lemma \ref{lem:L/I is solvable}. Note that $M$ is a finite-dimensional irreducible representation of $L/\Ann(M)$, so $M$ is one-dimensional by Lie's theorem.
\end{proof}

\begin{rem}
    An infinite-dimensional Lie algebra whose finite-dimensional irreducible representations are all one-dimensional is not necessarily strongly residually solvable. For example, all finite-dimensional irreducible representations of an infinite-dimensional simple Lie algebra $L$ are one-dimensional, but $L$ is not strongly residually solvable.
\end{rem}

We can now show that any subalgebra of $\W$ of finite codimension has one-dimensional extensions.

\begin{prop}\label{prop:one-dim extensions exist}
    Let $L$ be a proper subalgebra of $\W$ of finite codimension. Then $L$ has a one-dimensional extension $\overline{L}$ such that the short exact sequence of $U(L)$-modules
    $$0 \to L \to \overline{L} \to \overline{L}/L \to 0$$
    is non-split. Furthermore, any such extension can be uniquely embedded in $\W$ such that the diagram
    \begin{center}
        \begin{tikzcd}
            L \arrow[d, hook] \arrow[r, hook] & \W \\
            \overline{L} \arrow[ru, hook]     &   
        \end{tikzcd}
    \end{center}
    commutes.
\end{prop}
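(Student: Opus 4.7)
The ``furthermore'' clause follows immediately from Proposition~\ref{prop:extensions are in W}, so the real content is the existence of a one-dimensional extension. My plan is to produce $\overline{L}$ as the preimage in $\W$ of a one-dimensional $L$-submodule of $\W/L$, and then to verify non-splitness by ruling out the existence of an $L$-invariant complement of $L$ in $\overline{L}$.

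First I would view $\W/L$ as a finite-dimensional $L$-module under the adjoint action. By Lemma~\ref{lem:L/I is solvable}, $L$ acts on $\W/L$ through a solvable quotient, so Lie's theorem produces a one-dimensional $L$-submodule $M \subseteq \W/L$. Let $\overline{L}$ be the preimage of $M$ in $\W$. Then $L \subseteq \overline{L}$ with $\dim(\overline{L}/L) = 1$, and since $\overline{L}$ is an $L$-submodule of $\W$ containing $L$, one has $[\overline{L},\overline{L}] \subseteq [L,L] + [L,\overline{L}] \subseteq \overline{L}$, so $\overline{L}$ is automatically a Lie subalgebra.

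It remains to show that $0 \to L \to \overline{L} \to \overline{L}/L \to 0$ is non-split as a sequence of $U(L)$-modules. Suppose for contradiction that it splits, and pick $v \in \overline{L} \setminus L$ with $[u,v] = \chi(u)v$ for all $u \in L$, where $\chi \colon L \to \kk$ is the character of $\overline{L}/L$. If $\chi = 0$ then $v$ centralizes $L$, but the centralizer in $\kk(t)\del$ of any nonzero element of $\kk(t)\del$ is one-dimensional (as used in the proof of Proposition~\ref{prop:extensions are in W}), forcing $v \in \kk u \subseteq L$ for any nonzero $u \in L$, a contradiction. If $\chi \neq 0$, I would pick $f \in \kk[t] \nonzero$ with $\W(f) \subseteq L$ via Proposition~\ref{prop:Alexey}, write $v = g\del$, and expand the identity $[hf\del,g\del] = \chi(hf\del)\,g\del$ for all $h \in \kk[t]$. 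Setting $h = 1$ gives $fg' - f'g = \chi(f\del)g$, and back-substitution reduces the general identity to the scalar relation $h\chi(f\del) - h'f = \chi(hf\del)$; specializing to $h = t$ then yields $f = t\chi(f\del) - \chi(tf\del)$, which forces $\deg f \leq 1$. Since $\deg f = 0$ would give $L = \W$, we must have $f = t - \alpha$ up to scalar and hence $L = \W(t-\alpha)$ by codimension. A short direct calculation then shows that the eigenvector equation for $u = (t-\alpha)\del$ forces $g$ to be a nonzero constant $c \in \kk$, but then the eigenvector equation for $u = t(t-\alpha)\del$ reads $-(2t-\alpha)c = \chi(u)c$, which is impossible since the left-hand side is nonconstant in $t$.

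The main obstacle is the non-splitness verification in the case $\chi \neq 0$: unpacking the eigenvector equations and chasing the degree constraint on $f$ down to the final contradiction is the computational heart of the argument. The $\chi = 0$ subcase is immediate from the one-dimensionality of centralizers in $\kk(t)\del$, which is a recurring tool in this paper.
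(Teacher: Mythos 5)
Your proposal is correct and follows the paper's proof: both construct $\overline{L}$ as the preimage of a one-dimensional $U(L)$-submodule of $\W/L$ (obtained from Lemma~\ref{lem:L/I is solvable} and Lie's theorem via Corollary~\ref{cor:irreps are one-dim}) and invoke Proposition~\ref{prop:extensions are in W} for the embedding. The only difference is that the paper disposes of non-splitness with the one-line assertion that $\W$ has no one-dimensional $U(L)$-submodules, whereas you verify this by an explicit (and correct) computation; note that both of your cases follow at once from the observation that an eigenvector $v \neq 0$ would be centralised by the infinite-dimensional subalgebra $\Ker(\chi) \subseteq L$, contradicting the one-dimensionality of centralisers in $\kk(t)\del$.
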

\begin{proof}
    We start by considering the case where $\dim(\W/L) = 1$, so that $\W$ is a one-dimensional extension of $L$. In this case, the short exact sequence
    $$0 \to L \to \W \to \W/L \to 0$$
    is non-split, since $\W$ does not have any one-dimensional $U(L)$-submodules.
    
    Now suppose $\dim(\W/L) \geq 2$. By Corollary \ref{cor:irreps are one-dim}, $\W/L$ has a one-dimensional $U(L)$-submodule $M$. Let $\pi \colon \W \to \W/L$ be the quotient map and let $\overline{L} = \pi^{-1}(M)$. Then $L \subseteq \overline{L} \subseteq \W$ and the short exact sequence
    $$0 \to L \to \overline{L} \to M \to 0$$
    is non-split for the same reason as above: $\W$ does not have any one-dimensional $U(L)$-submodules.

    The final statement is a restatement of Proposition \ref{prop:extensions are in W}.
\end{proof}

Proposition \ref{prop:one-dim extensions exist} allows us to prove Theorem \ref{thm:extending isomorphisms}.

\begin{proof}[Proof of Theorem \ref{thm:extending isomorphisms}]
    By Proposition \ref{prop:one-dim extensions exist}, there is a one-dimensional extension $L_1 \subseteq \overline{L}_1 \subseteq \W$ of $L_1$ such that the short exact sequence of $U(L)$-modules
    $$0 \to L_1 \to \overline{L}_1 \to \overline{L}_1/L_1 \to 0$$
    is non-split. Now, $L_2 \xrightarrow{\varphi^{-1}} L_1 \hookrightarrow \overline{L}_1$ is also a one-dimensional extension of $L_2$. Therefore, Proposition \ref{prop:extensions are in W} implies that there is an injective map $\overline{\varphi} \colon \overline{L}_1 \hookrightarrow \W$ such that the diagram
    \begin{center}
        \begin{tikzcd}
            L_2 \arrow[d, hook] \arrow[r, hook]       & \W \\
            \overline{L}_1 \arrow[ru, "\overline{\varphi}"', hook] &   
        \end{tikzcd}
    \end{center}
    commutes. Let $\overline{L}_2 = \overline{\varphi}(\overline{L}_1)$. Now, we have an isomorphism $\overline{\varphi} \colon \overline{L}_1 \to \overline{L}_2$ such that $\restr{\overline{\varphi}}{L_1} = \varphi$. Inducting on $\codim_{\W}(L_1)$, we conclude that $\varphi$ extends to an automorphism of $\W$.
\end{proof}

Theorem \ref{thm:extending isomorphisms} allows us to determine when two submodule-subalgebras of $\W$ are isomorphic. We now describe the automorphism group of $\W$. The most ``obvious" automorphisms of $\W$ are those which are induced by automorphisms of $\kk[t]$. We introduce notation for these automorphisms of $\W$.

\begin{ntt}
    For $n \geq -1$ and $x \in \kk$, we let $e_n(x) = (t - x)^{n+1}\del$. Letting $\alpha \in \kk^*$ and $x \in \kk$, we define a linear map
    \begin{align*}
        \rho_{x;\alpha} \colon \W &\to \W \\
        e_n &\mapsto \alpha^n e_n(x)
    \end{align*}
\end{ntt}

In fact, all automorphisms of $\W$ are of the form $\rho_{x;\alpha}$ for some $x \in \kk, \alpha \in \kk^*$. This result appeared without proof in \cite{Rudakov}. For a proof, see \cite{Bavula}.

\begin{prop}[{\cite{Rudakov}}]\label{prop:Rudakov}
    We have $\Aut(\W) = \{\rho_{x;\alpha} \mid x \in \kk, \alpha \in \kk^*\} \cong \kk \rtimes \kk^*$.
\end{prop}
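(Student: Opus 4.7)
The plan is to reduce an arbitrary $\phi \in \Aut(\W)$ to one fixing $e_0$, and then exploit the bracket relations $[e_n, e_m] = (m - n)e_{n+m}$. The organizing principle is that, up to the candidate automorphisms $\rho_{x;\alpha}$, the element $e_0$ is characterized by the spectrum of its adjoint representation: $\ad_{e_0}$ is diagonalizable on $\W$ with simple spectrum $\ZZ_{\geq -1}$, the eigenspaces being the lines $\kk e_n$.

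First I would analyze $h \coloneqq \phi(e_0)$. Since $\phi$ is an automorphism, $\ad_h$ is similar to $\ad_{e_0}$ on $\W$, hence diagonalizable with simple spectrum $\ZZ_{\geq -1}$. Writing $h = f\del$, an $\ad_h$-eigenvector $g\del$ with eigenvalue $\lambda$ satisfies the ODE $fg' - f'g = \lambda g$. The main obstacle is to conclude that $\deg(f) = 1$ from the infinitude of the spectrum. For $\deg(f) = 0$, $\ad_h$ is a scalar multiple of $\del$ acting on $\kk[t]$, which is locally nilpotent, so only $\lambda = 0$ occurs. For $\deg(f) \geq 2$, a comparison of leading terms in the ODE forces $\deg(g) = \deg(f)$ whenever $\lambda \neq 0$; all nonzero-eigenvalue eigenvectors then lie in the finite-dimensional space of polynomials of degree exactly $\deg(f)$, giving a finite spectrum and contradicting the infinitude of $\ZZ_{\geq -1}$. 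Hence $\deg(f) = 1$. Writing $f = \alpha(t - x)$, a direct computation yields spectrum $\alpha\,\ZZ_{\geq -1}$, and equating this with $\ZZ_{\geq -1}$ as sets forces $\alpha = 1$. So $h = (t - x)\del = \rho_{x;1}(e_0)$.

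Replacing $\phi$ by $\rho_{x;1}^{-1}\circ\phi$ reduces to the case $\phi(e_0) = e_0$. Then $\phi$ commutes with $\ad_{e_0}$, so it preserves each eigenspace $\kk e_n$ and acts as a scalar $\lambda_n \in \kk^*$ on it, with $\lambda_0 = 1$. Applying $\phi$ to $[e_n, e_m] = (m - n)e_{n+m}$ yields the multiplicative relation $\lambda_n\lambda_m = \lambda_{n+m}$ whenever $n \neq m$. Setting $\alpha \coloneqq \lambda_1$, a short induction using these relations (beginning with $\lambda_{-1} = \alpha^{-1}$ from $\lambda_{-1}\lambda_1 = 1$ and $\lambda_2 = \alpha^2$ from $\lambda_{-1}\lambda_2 = \lambda_1$) yields $\lambda_n = \alpha^n$ for all $n \geq -1$. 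Thus $\rho_{x;1}^{-1}\circ\phi = \rho_{0;\alpha}$, and a check on generators confirms $\rho_{x;1}\circ\rho_{0;\alpha} = \rho_{x;\alpha}$, so $\phi = \rho_{x;\alpha}$. The group structure $\kk \rtimes \kk^*$ then follows by computing the composition $\rho_{x;\alpha}\circ\rho_{y;\beta}$ on generators, yielding an affine law in the translation parameters with $\kk^*$ acting on $\kk$ by scaling.
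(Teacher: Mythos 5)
The paper does not actually prove this proposition: it records the statement as a known result, citing \cite{Rudakov} (where it appears without proof) and deferring to \cite{Bavula} for a proof. So there is no internal argument to compare against; what you have supplied is a self-contained elementary proof, and it is correct. Your key idea --- that up to the known automorphisms $\rho_{x;\alpha}$ the element $e_0$ is characterized inside $\W$ by the spectral data of $\ad_{e_0}$ (diagonalizable with simple spectrum $\ZZ_{\geq -1}$) --- is sound, and the degree analysis of the eigenvector equation $fg' - f'g = \lambda g$ correctly rules out $\deg(f) = 0$ (local nilpotence, so only the eigenvalue $0$) and $\deg(f) \geq 2$ (all eigenvectors with nonzero eigenvalue land in a finite-dimensional space, contradicting an infinite spectrum); the normalization $\alpha = 1$ from comparing $\alpha\,\ZZ_{\geq -1}$ with $\ZZ_{\geq -1}$ as sets is also right, since $\alpha = -1$ produces $\ZZ_{\leq 1}$. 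The second half, reducing to $\phi(e_0) = e_0$ and extracting $\lambda_n \lambda_m = \lambda_{n+m}$ for $n \neq m$ from the bracket relations, pins down $\lambda_n = \alpha^n$ exactly as claimed (the only care needed is at $\lambda_2$, which you handle via the pair $(-1,2)$). One cosmetic remark: in the $\deg(f) \geq 2$ case a constant eigenvector $g$ with $\lambda \neq 0$ does not literally satisfy $\deg(g) = \deg(f)$, but it forces $\deg(f) \leq 1$ directly, so the conclusion that the nonzero spectrum is finite is unaffected. It would also be worth stating explicitly that each $\rho_{x;\alpha}$ is an automorphism (a one-line check on the bracket, or by noting it is induced by the algebra automorphism $t \mapsto \alpha^{-1}t + x$ of $\kk[t]$), since the proposition asserts an equality of sets. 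Compared with the route through \cite{Bavula}, your argument is more elementary and entirely internal to the combinatorics of the basis $\{e_n\}$, at the cost of not generalizing to $\Der$ of polynomial algebras in several variables.
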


As an immediate consequence, we determine exactly when two submodule-subalgebras of $\W$ are isomorphic, and compute the automorphism group of $\W(f)$.

\begin{cor}\label{cor:classification}
    Let $f,g \in \kk[t]$. Then $\W(f) \cong \W(g)$ if and only if there exist $\alpha, \gamma \in \kk^*, x \in \kk$ such that $f(s) = \gamma g(t)$, where $s = \alpha(t - x)$.
    
    Furthermore, the group of automorphisms of $\W(f)$ is
    $$\Aut(\W(f)) = \{\varphi \in \Aut(\W) \mid \varphi(f\del) = \alpha f\del \text{ for some } \alpha \in \kk^*\}.$$
\end{cor}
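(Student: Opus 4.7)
The plan is to combine Theorem \ref{thm:extending isomorphisms} (every isomorphism between finite-codimension subalgebras of $\W$ extends to $\Aut(\W)$) with Proposition \ref{prop:Rudakov} (which identifies $\Aut(\W)$ with $\{\rho_{x;\alpha}\}$) and then reduce everything to an explicit calculation of the action of $\rho_{x;\alpha}$ on submodule-subalgebras. First I would verify the general formula
$$\rho_{x;\alpha}(h\del) = \alpha^{-1}h(\alpha(t-x))\del \qquad (h \in \kk[t])$$
by expanding $h = \sum h_k t^k$ and applying $\rho_{x;\alpha}(e_n) = \alpha^n(t-x)^{n+1}\del$ termwise. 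Applied to the spanning set $\{t^n f\del : n \geq 0\}$ of $\W(f)$, this yields
$$\rho_{x;\alpha}(\W(f)) = \W(f(\alpha(t-x))).$$
Since $\W(p) = \W(q)$ for nonzero $p,q \in \kk[t]$ iff $p$ and $q$ differ by a unit of $\kk[t]$ (i.e.\ a nonzero scalar), we deduce that $\rho_{x;\alpha}(\W(f)) = \W(g)$ is equivalent to $f(\alpha(t-x)) = \gamma g(t)$ for some $\gamma \in \kk^*$.

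For the first part of the corollary, given an isomorphism $\varphi \colon \W(f) \to \W(g)$ with $f,g \neq 0$, Theorem \ref{thm:extending isomorphisms} extends $\varphi$ to an automorphism $\tilde\varphi \in \Aut(\W)$, which by Proposition \ref{prop:Rudakov} equals $\rho_{x;\alpha}$ for some $x \in \kk$, $\alpha \in \kk^*$. The computation above gives the desired relation with $s = \alpha(t-x)$. Conversely, any such $x, \alpha, \gamma$ produce $\rho_{x;\alpha}$ which restricts to an isomorphism $\W(f) \to \W(g)$. For the automorphism group, Theorem \ref{thm:extending isomorphisms} identifies $\Aut(\W(f))$ with $\{\varphi \in \Aut(\W) : \varphi(\W(f)) = \W(f)\}$. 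Specialising the first part to $g = f$, the element $\rho_{x;\alpha}$ preserves $\W(f)$ iff $f(\alpha(t-x)) = \gamma f(t)$ for some $\gamma \in \kk^*$, and via the formula for $\rho_{x;\alpha}(f\del)$ this is equivalent to $\rho_{x;\alpha}(f\del) = \alpha^{-1}\gamma \cdot f\del$, i.e.\ $\varphi(f\del) \in \kk^* \cdot f\del$.

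There is no real obstacle here: the argument is a direct combination of the two main structural results already established, together with a short explicit calculation of $\rho_{x;\alpha}(h\del)$. The only minor subtlety to flag is that preservation of the whole submodule-subalgebra $\W(f)$ (as opposed to merely $\varphi(f\del) \in \W(f)$) is automatic from $\varphi(f\del) \in \kk^* \cdot f\del$, which in turn follows from the $\kk[t]$-module description of $\W(f)$ noted above.
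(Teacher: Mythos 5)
Your proposal is correct and follows essentially the same route as the paper: extend the isomorphism via Theorem \ref{thm:extending isomorphisms}, identify it as some $\rho_{x;\alpha}$ via Proposition \ref{prop:Rudakov}, compute $\rho_{x;\alpha}(f\del)$ explicitly, and obtain the automorphism group as the special case $g = f$. The only difference is that you spell out the formula $\rho_{x;\alpha}(h\del) = \alpha^{-1}h(\alpha(t-x))\del$ and the ideal-theoretic equivalence $\W(p) = \W(q) \iff p \in \kk^* q$ in slightly more detail than the paper does, which is harmless.
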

\begin{proof}
    Suppose there is an isomorphism $\varphi \colon \W(f) \to \W(g)$. Then Theorem \ref{thm:extending isomorphisms} and Proposition \ref{prop:Rudakov} imply that there exist $\alpha \in \kk^*, x \in \kk$ such that $\varphi = \restr{\rho_{x;\alpha}}{\W(f)}$. It follows that $\rho_{x;\alpha}(f\del)$ is a scalar multiple of $g\del$. In other words, $f(\alpha(t - x))$ is a scalar multiple of $g$.

    Conversely, suppose that there exist $\alpha \in \kk^*, x \in \kk$ such that $f(\alpha(t - x))$ is a scalar multiple of $g$. Then it is clear that $\rho_{x;\alpha}(\W(f)) = \W(g)$, so $\W(f) \cong \W(g)$.

    The computation of $\Aut(\W(f))$ is the special case $g = f$ of the above.
\end{proof}

Theorem \ref{thm:extending isomorphisms} and Proposition \ref{prop:Rudakov} imply that if $L_1$ and $L_2$ are isomorphic subalgebras of $\W$ of finite codimension, then $L_1$ is a submodule-subalgebra of $\W$ if and only if $L_2$ is also a submodule-subalgebra. This suggests that we should be able to differentiate between the subalgebras of $\W$ which are submodules and those which are not. One possible way of distinguishing between these is given in the next result\footnote{Proposition \ref{prop:codimension abelianisation} is a result of a collaboration with Jason Bell.}.

\begin{prop}\label{prop:codimension abelianisation}
    Let $L$ be a subalgebra of $\W$ of finite codimension. Then $\dim(L^{\ab}) = \codim_{\W}(L)$ if and only if $L$ is a $\kk[t]$-submodule of $\W$.
\end{prop}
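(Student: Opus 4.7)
The forward direction is immediate from Lemma \ref{lem:derived subalgebra}: if $L = \W(f)$, then $[L,L] = \W(f^2)$, whence $L^{\ab} \cong \W(f)/\W(f^2)$ has dimension $\deg(f) = \codim_{\W}(L)$. For the converse, assume $L$ is not a submodule-subalgebra. By Proposition \ref{prop:Alexey} there is a unique monic $f$ of minimal degree with $\W(f) \subseteq L \subseteq \W(\rad(f))$, and $L \neq \W(f)$ gives $d := \dim(L/\W(f)) \geq 1$. The goal is to show $\dim(L^{\ab}) < \codim_{\W}(L) = \deg(f) - d$.

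Since $\W(f^2) = [\W(f),\W(f)] \subseteq [L,L]$ by Lemma \ref{lem:derived subalgebra}, pass to the finite-dimensional quotient $\mathfrak{L} := L/\W(f^2)$, in which $L^{\ab} \cong \mathfrak{L}^{\ab}$. The ideal $J := \W(f)/\W(f^2)$ is abelian of dimension $\deg(f)$, and $\mathfrak{L}/J \cong L/\W(f)$ has dimension $d$. Picking a vector-space complement $U \subseteq \mathfrak{L}$ to $J$, one has $[\mathfrak{L}, \mathfrak{L}] = [U, J] + [U, U]$, and the desired inequality becomes $\dim([\mathfrak{L}, \mathfrak{L}]) > 2d$. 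The central computation is as follows: for $v = g\rad(f)\del \in L \setminus \W(f)$ (so $s := f/\rad(f)$ satisfies $s \nmid g$) and any $fp\del \in \W(f)$, the Wronskian formula yields
$$[g\rad(f)\del, fp\del] = \rad(f)^2\bigl(g(sp)' - g'(sp)\bigr)\del.$$
Lemma \ref{lem:order of vanishing} shows this action respects the Chinese remainder decomposition $J \cong \bigoplus_{\xi \in V(f)} J_\xi$ (with $\dim J_\xi = k_\xi := \ord_\xi(f)$), and a local calculation shows that $\ad_v|_{J_\xi}$ has rank $k_\xi - \ord_\xi(g)$ whenever $\ord_\xi(g) \leq k_\xi - 2$, and rank $0$ otherwise.

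Summing these local rank contributions across a basis of $U$ gives a lower bound on $\dim([U, J])$; adding the contribution from $[U, U]$ (nontrivial whenever $L/\W(f)$ is non-abelian as a subalgebra of $\W(\rad(f))/\W(f)$) yields $\dim([\mathfrak{L}, \mathfrak{L}]) \geq 2d + 1$. The main obstacle is verifying this strict inequality uniformly: the minimality of $f$ forces $L/\W(f) \subsetneq \W(\rad(f))/\W(f)$, which prevents degenerate configurations in which $U$ could act with uniformly low rank on $J$ at every root. Concretely, every $v \in U \setminus \{0\}$ satisfies $\ord_\xi(g) \leq k_\xi - 2$ for at least one root $\xi$ (since $s \nmid g$) and thus contributes rank $\geq 2$ to the $\xi$-component of $[U, J]$; a careful combinatorial balance between the $[U, J]$ and $[U, U]$ contributions across the CRT decomposition then yields $\dim([\mathfrak{L}, \mathfrak{L}]) > 2d$, hence $\dim(L^{\ab}) < \codim_{\W}(L)$.
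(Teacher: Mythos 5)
Your forward direction matches the paper's and is fine, and the reduction of the converse to the inequality $\dim([\mathfrak{L},\mathfrak{L}]) > 2d$ in the finite-dimensional quotient $\mathfrak{L} = L/\W(f^2)$ is a legitimate (and genuinely different) framework; the Wronskian identity and the local rank computation for $\ad_v|_{J_\xi}$ are also correct. The problem is that the one step carrying all the difficulty --- the strict inequality itself --- is never actually proved. Worse, the assembly strategy you propose is logically backwards: for a basis $v_1,\ldots,v_d$ of $U$ one has $\dim([U,J]) = \dim\bigl(\sum_i \ad_{v_i}(J)\bigr) \leq \sum_i \mathrm{rank}(\ad_{v_i}|_J)$, so ``summing the local rank contributions'' gives an \emph{upper} bound on $\dim([U,J])$, not a lower one; the images of distinct $\ad_{v_i}$ can overlap arbitrarily, so the fact that each $v_i$ individually contributes rank $\geq 2$ at some root does not force $\dim([U,J])$ to grow with $d$. (Already for $d=1$ the case ``total rank exactly $2$'' has to be excluded by showing it forces $L = \W(h)$ with $\deg(h) = \deg(f)-1$, contradicting minimality of $f$ --- this works, but it is a separate argument you do not give, and for $d \geq 2$ the interaction between overlapping images of $[U,J]$ and the $[U,U]$ term is precisely the ``careful combinatorial balance'' that you assert rather than establish.) As written, the converse is a plan, not a proof.

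For comparison, the paper takes a different route that sidesteps this counting problem entirely. It first disposes of the case $\deg(L) \neq \ZZ_{\geq r}$ by a direct degree count (Lemma \ref{lem:not an interval}). In the remaining case it \emph{assumes} equality $\dim(L^{\ab}) = \codim_{\W}(L)$ and deduces from a degree count that the derived subalgebra is principal, $[L,L] = [g\del,L]$, where $g\del \in L$ has minimal degree (Lemma \ref{lem:derived subalgebra is principal}). Writing $L = \spn\{g\del, g\theta_1\del,\ldots\}$ with $\theta_i \in \kk(t)$, non-submodularity produces a $\theta_i$ with a pole at a root $\lambda$ of $g$, and the inclusion $[\theta_i\del,\theta_j\del] \in \spn\{\theta_k'\del\}$ forces the set of orders $\ord_\lambda(\theta_i)$ to be closed under addition of distinct elements; a short arithmetic argument then produces an element with a pole of order exactly $1$ at $\lambda$, which cannot lie in $\spn\{\theta_k'\del\}$. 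Turning your equality hypothesis into a structural statement ($[L,L]$ principal) and then contradicting it is what replaces the missing lower-bound argument; if you want to salvage your approach, you would need an analogous mechanism to control the overlaps in $\sum_i \ad_{v_i}(J)$.
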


The proof of Proposition \ref{prop:codimension abelianisation} will be split into two cases depending on what the associated graded algebra of $L$ looks like. We will see that if $\gr(L) \neq W_{\geq r}$ for any $r$, then it is not hard to show that $\dim(L^{\ab}) < \codim_{\W}(L)$ by simply looking at degrees of elements of $[L,L]$. Therefore, the difficult case is when $\gr(L) = W_{\geq r}$ for some $r \in \NN$.

First, we introduce some notation.

\begin{ntt}
    Let $L$ be a subalgebra of $\W$. We write $\deg(L) = \{\deg(u) \mid u \in L\}$.
\end{ntt}

We now prove the first case of Proposition \ref{prop:codimension abelianisation}, when $\gr(L) \neq W_{\geq r}$ for any $r$, in other words, when $\deg(L) \neq \ZZ_{\geq r}$ for any $r$.

\begin{lem}\label{lem:not an interval}
    Let $L$ be a subalgebra of $\W$ of finite codimension and suppose $\deg(L) \neq \ZZ_{\geq r}$ for any $r \geq -1$. Then $\dim(L^{\ab}) < \codim_{\W}(L)$.
\end{lem}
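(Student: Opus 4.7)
The plan is to translate the statement into combinatorics on $A := \deg(L) \subseteq \ZZ_{\geq -1}$ and its complement $S := \ZZ_{\geq -1} \setminus A$. Since $L$ has finite codimension in $\W$, $A$ is cofinite and $|S|$ is finite; a standard filtered-to-graded count gives $\codim_{\W}(L) = |S|$. Writing $B := \deg([L,L])$, the same argument applied to the filtered inclusion $[L, L] \subseteq L$ yields $\dim(L^{\ab}) = |A \setminus B|$. So it suffices to show $|A \setminus B| < |S|$.

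The structural input I would use is that $B = \{n + m : n, m \in A, n \neq m\}$, since $[u_n, u_m]$ has leading term a nonzero multiple of $e_{n+m}$ when $n \neq m$; in particular $B \subseteq A$ because $[L, L] \subseteq L$. A first consequence is that $-1 \notin A$: otherwise $\del \in L$, and from $-1 + n \in B \subseteq A$ for $n \in A \cap \ZZ_{\geq 0}$ we see that $A$ is closed under $n \mapsto n - 1$, which together with cofiniteness forces $A = \ZZ_{\geq -1}$, contradicting the hypothesis. So $r := \min(A) \geq 0$.

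Next I would build an injection $\phi \colon A \setminus B \to S$ that fails to be surjective. The natural choice is $\phi(n) := n - r$, which lands in $S \setminus \{-1\}$ whenever $n \neq 2r$: if $n \in A$ and $n - r \in A$ with $n - r \neq r$, the decomposition $n = r + (n - r)$ forces $n \in B$, so $n \in A \setminus B$ with $n \neq 2r$ implies $n - r \in S$ (and $n - r \geq 0$ since $n \geq r$). If $2r \in A \setminus B$, I set $\phi(2r) := -1$; this preserves injectivity because no $n \geq r$ satisfies $n - r = -1$.

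Non-surjectivity is the main obstacle, and I would handle it by case analysis. If $2r \notin A$, the image avoids $-1$ and we are done. If $2r \in A$ but some $k \in S$ lies in the interval $(r, 2r)$, then $k - r \in \{0, \ldots, r - 1\} \subseteq S$ is not hit, because its $\phi$-preimage would be $k \notin A$. The delicate remaining case is $[r, 2r] \subseteq A$; I would rule this out by induction on $N \geq 2r$, propagating $[r, N] \subseteq A$ to $[r, N + 1] \subseteq A$ via $N + 1 = r + (N + 1 - r)$ and $B \subseteq A$, which forces $A = \ZZ_{\geq r}$ and contradicts the hypothesis. A separate quick argument handles $r = 0$: every $n > 0$ in $A$ lies in $B$ via $n = 0 + n$, so $A \setminus B \subseteq \{0\}$ and $|A \setminus B| \leq 1 < |S|$.
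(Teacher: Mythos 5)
Your argument is correct, and at its core it runs on the same engine as the paper's proof: both rest on the observation that $r + (A \setminus \{r\}) \subseteq \deg([L,L])$ where $r = \min(\deg(L))$, together with the standard filtered-to-graded identifications $\codim_{\W}(L) = |S|$ and $\dim(L^{\ab}) = |A \setminus B|$. The difference is in the bookkeeping. The paper writes $\deg(L) = \ZZ_{\geq \rho} \sqcup \{n_1 < \cdots < n_k\}$ with $\rho$ minimal such that $\ZZ_{\geq \rho} \subseteq \deg(L)$, notes that the hypothesis forces $k \geq 1$ and $n_1 < \rho - k$, and then reads off $\dim(L^{\ab}) \leq n_1 + 1 < \rho - k + 1 = \codim_{\W}(L)$ directly from the containment $\deg([L,L]) \supseteq \ZZ_{\geq n_1 + \rho} \cup \{n_1 + n_2, \ldots, n_1 + n_k\}$; the strict inequality falls out of the arithmetic with no case analysis. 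You instead package the same containment as the injection $n \mapsto n - r$ of $A \setminus B$ into $S$, and then have to work for non-surjectivity, which costs you the three-way split on $2r$ and the separate treatment of $r = 0$; in exchange your version makes explicit which gaps of $\deg(L)$ absorb the abelianisation. Two small points to tighten, neither of which affects correctness: the asserted equality $B = \{n + m : n, m \in A,\ n \neq m\}$ is neither needed nor obviously true (degrees can drop under cancellation in sums of brackets), and you only ever use the two inclusions $\{n + m : n, m \in A,\ n \neq m\} \subseteq B \subseteq A$, both of which are immediate, so state those instead; and in the $r = 0$ case the final step $1 < |S|$ needs the one-line remark that $-1 \in S$ and $S \neq \{-1\}$, since $A = \ZZ_{\geq 0}$ is excluded by hypothesis.
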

\begin{proof}
    We know that $\deg(L) \supseteq \ZZ_{\geq r}$ for some $r \in \NN$. Choose $r$ minimal with this property. As a vector space, we can write $L = L_{\geq r} \oplus \spn\{g_1\del,\ldots,g_k\del\}$, where
    $$L_{\geq r} = \spn\{f_r\del,f_{r+1}\del,f_{r+3}\del,\ldots\}$$
    is some subspace of $L$ spanned by elements of degrees $r$ and above, $\deg(f_i\del) = i$ for all $i$, and $g_i \in \CC[t]$ are polynomials such that $\deg(g_1\del) < \deg(g_2\del) < \ldots < \deg(g_k\del) < r$. By minimality of $r$, we must have $\deg(g_k\del) < r - 1$, and thus $\deg(g_1\del) < r - k$.

    Now, $\codim_{\W}(L) = r - k + 1$, so we want to show that $\dim(L/[L,L]) < r - k + 1$. Let $n_i = \deg(g_i\del)$. Taking brackets of $g_1\del$ with elements in $L_{\geq r}$, we can get elements of any degree greater than or equal to $n_1 + r$, in other words, $\ZZ_{\geq n_1 + r} \subseteq \deg([L,L])$. In $[L,L]$, we also have the elements $[g_1\del,g_i\del]$ for $i = 2,\ldots,k$. These elements have degrees $n_1 + n_i$, which are distinct integers less than $n_1 + r$. In other words,
    $$\deg([L,L]) \supseteq \ZZ_{\geq n_1 + r} \cup \{n_1 + n_2, n_1 + n_3, \ldots, n_1 + n_k\}.$$
    We therefore see that $\codim_{\W}([L,L]) \leq n_1 + r + 1 - (k - 1) = n_1 + r - k + 2$, and thus
    \begin{multline*}
        \dim(L/[L,L]) = \codim_{\W}([L,L]) - \codim_{\W}(L) \\
        \leq n_1 + r - k + 2 - (r - k + 1) = n_1 + 1 < r - k + 1,
    \end{multline*}
    as required.
\end{proof}

The remaining case is when $\gr(L) = W_{\geq r}$ for some $r \in \NN$. For this case, we will need the following easy lemma.

\begin{lem}\label{lem:derived subalgebra is principal}
    Let $L$ be a subalgebra of $\W$ of finite codimension such that $\deg(L) = \ZZ_{\geq r}$ for some $r \in \NN$ and
    $\codim_{\W}(L) = \dim(L^{\ab})$. Let $g\del$ be an element of $L$ of degree $r$. Then $[L,L] = [g\del,L]$.
\end{lem}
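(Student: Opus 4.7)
The plan is to pass to the associated graded Lie algebra with respect to the degree filtration on $\W$ and use a dimension count to pin $[L,L]$ down exactly.

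First, the hypothesis $\deg(L) = \ZZ_{\geq r}$ is equivalent to $\gr(L) = W_{\geq r}$, and in particular $\codim_{\W}(L) = r+1$. The Lie bracket respects the filtration, and since $[e_n,e_m] = (m-n)e_{n+m}$ vanishes precisely when $n = m$, we have
$$[\gr(L),\gr(L)] = [W_{\geq r},W_{\geq r}] = W_{\geq 2r+1}.$$
Consequently $\gr([L,L]) \subseteq W_{\geq 2r+1}$, so every element of $[L,L]$ has degree at least $2r+1$.

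Next, I would use the abelianisation hypothesis to force this inclusion to be an equality. From the short exact sequence $0 \to L/[L,L] \to \W/[L,L] \to \W/L \to 0$ and the assumption $\dim(L^{\ab}) = \codim_{\W}(L) = r+1$, we obtain
$$\codim_{\W}([L,L]) = \codim_{\W}(L) + \dim(L^{\ab}) = 2r+2 = \codim_{\W}(W_{\geq 2r+1}).$$
Combined with $\gr([L,L]) \subseteq W_{\geq 2r+1}$, this forces $\gr([L,L]) = W_{\geq 2r+1}$; that is, for each $k \geq 2r+1$ the graded piece $\gr([L,L])_k$ equals $\kk e_k$.

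It then remains to show the same for $[g\del,L]$ and descend. For each $n \geq r+1$, pick $h_n\del \in L$ of degree $n$; since $n \neq r$, the bracket $[g\del,h_n\del]$ has degree $r+n$ with nonzero leading term, so $\gr([g\del,L]) \supseteq W_{\geq 2r+1}$ (and equality follows from $[g\del,L] \subseteq [L,L]$). Given $b \in [L,L]$ of degree $k \geq 2r+1$, one-dimensionality of $\gr([L,L])_k$ yields $a \in [g\del,L]$ with $\LT(a) = \LT(b)$, and $b - a \in [L,L]$ has strictly smaller degree. Downward induction on $k$ — using that $[L,L]$ contains no element of degree $< 2r+1$ as the base case — shows $b \in [g\del,L]$, hence $[L,L] = [g\del,L]$.

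The main step is the middle one: extracting the sharp identity $\gr([L,L]) = W_{\geq 2r+1}$ from the abelianisation hypothesis via the dimension count. Once that is in hand, the comparison between $[g\del,L]$ and $[L,L]$ is a routine leading-term descent.
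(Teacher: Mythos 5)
Your first step contains a genuine error: the inclusion $\gr([L,L]) \subseteq [\gr(L),\gr(L)]$ goes the wrong way. The standard associated-graded containment is $[\gr(L),\gr(L)] \subseteq \gr([L,L])$; the reverse can fail because an element of $[L,L]$ is a \emph{sum} of brackets, and the leading terms of the individual brackets may cancel, leaving a nonzero element of unexpectedly low degree. Concretely, the claim you extract from this step --- that $\deg(L) = \ZZ_{\geq r}$ alone forces every nonzero element of $[L,L]$ to have degree at least $2r+1$ --- is false. Take $g = t^2(t-1)^2$, $h = t(t-1)(2t-1)$, and $L = \kk h\del + \W(g)$. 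Since $hg' - h'g = (2t^2-2t+1)g$, one checks that $[h\del, gk\del] = g\bigl((2t^2-2t+1)k + hk'\bigr)\del \in \W(g)$, so $L$ is a subalgebra with $\deg(L) = \ZZ_{\geq 2}$, i.e.\ $r = 2$. A direct computation gives $[h\del,\tfrac{1}{6}(t^2-t-1)g\del] = (g^2 - \tfrac{1}{6}g)\del$, and since $g^2\del \in [\W(g),\W(g)] \subseteq [L,L]$ we conclude $g\del \in [L,L]$, a nonzero element of degree $3 < 2r+1 = 5$. (This $L$ necessarily violates the abelianisation hypothesis, so it is not a counterexample to the lemma itself, but it shows your step 1 cannot be established before that hypothesis enters, which is how you present it.)

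The error is repairable without changing your strategy: keep the codimension count of your second step, but pair it with the correct inclusion $W_{\geq 2r+1} = [\gr(L),\gr(L)] \subseteq \gr([L,L])$. Since $\codim_{\W}(\gr([L,L])) = \codim_{\W}([L,L]) = 2r+2 = \codim_{\W}(W_{\geq 2r+1})$, the containment is an equality --- and this is precisely where $\dim(L^{\ab}) = \codim_{\W}(L)$ is used. Everything downstream (the computation of $\gr([g\del,L])$ and the leading-term descent, including its base case) then goes through. For comparison, the paper's proof sidesteps the issue entirely by never bounding $\deg([L,L])$ from below: the brackets of $g\del$ with a basis of $L$ adapted to degree have pairwise distinct degrees $2r+1, 2r+2, \ldots$, so no cancellation can occur, giving $\dim(L/[g\del,L]) = r+1 = \dim(L/[L,L])$ directly; equality then follows from $[g\del,L] \subseteq [L,L]$. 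That version is shorter and avoids the subtlety you tripped over.
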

\begin{proof}
    Write $L = \spn\{g\del,f_1\del,f_2\del,\ldots\}$, where $\deg(f_i\del) = \deg(g\del) + i = r + i$. Then
    $$[g\del,L] = \spn\{[g\del,f_1\del],[g\del,f_2\del],[g\del,f_3\del],\ldots\},$$
    so $[g\del,L]$ has elements of degrees $2r + 1$ and above. Hence,
    $$\dim(L/[g\del,L]) = r + 1 = \codim_{\W}(L) = \dim(L^{\ab}).$$
    Since $[g\del,L] \subseteq [L,L]$, it follows that $[g\del,L] = [L,L]$.
\end{proof}

We are now ready to prove Proposition \ref{prop:codimension abelianisation}.

\begin{proof}[Proof of Proposition \ref{prop:codimension abelianisation}]
    Suppose $L$ is a submodule-subalgebra. Then $L = \W(f)$ for some $f \in \CC[t]$. We have $[L,L] = \W(f^2)$, and therefore
    $$\codim_{\W}(L) = \dim(L^{\ab}) = \deg(f).$$
    Conversely, suppose $L$ is not a submodule-subalgebra. By Lemma \ref{lem:not an interval}, we may assume that $\deg(L) = \ZZ_{\geq r}$ for some $r \in \NN$. Let $g\del \in L$ be an element of degree $r$, and write
    $$L = \spn\{g\del,g\theta_1\del,g\theta_2\del,\ldots\},$$
    where $\theta_i \in \CC(x)$. Since $g\theta_i \in \CC[x]$, the poles of $\theta_i$ are limited to roots of $g$. Furthermore, since $L$ is not a submodule-subalgebra, it must be the case that at least one $\theta_i$ has a pole at some root $\lambda$ of $g$. Let $p_i$ be the order of vanishing of $\theta_i$ at $\lambda$ (which is negative if $\theta_i$ has a pole at $\lambda$), and let $V = \{p_1,p_2,p_3,\ldots\}$. Relabeling if necessary, assume that $p_1 < p_2 < p_3 < \ldots$. In particular, $p_1 < 0$, in other words, $\theta_1$ has a pole at $\lambda$.
    
    Assume, for a contradiction, that $\dim(L/[L,L]) = \codim_{\W}(L)$. Lemma \ref{lem:derived subalgebra is principal} implies that
    $$[L,L] = [g\del,L] = \spn\{g^2\theta_i'\del \mid i \geq 1\},$$
    since $[g\del,g\theta_i\del] = g^2\theta_i'\del$. Therefore,
    $$[g\theta_i\del,g\theta_j\del] = g^2[\theta_i\del,\theta_j\del] \in \spn\{g^2\theta_i'\del \mid i \geq 1\}.$$
    It follows that $[\theta_i\del,\theta_j\del] \in \spn\{\theta_i'\del \mid i \geq 1\}$ for all $i,j$.

    Note that, for $i \neq j$, the order of vanishing of $[\theta_i\del,\theta_j\del]$ at $\lambda$ is $p_i + p_j - 1$. The above implies that $p_i + p_j - 1 = p_k - 1$ for some $k$, and therefore $p_i + p_j = p_k \in V$. Hence, $V$ is closed under addition of distinct elements. In particular, $p_1 + p_2 = p_n$ for some $n$. Since $p_1 < 0$, we must have $p_n < p_2$, which forces $n = 1$. Hence, $p_2 = 0$. Similarly, $p_1 + p_3 = p_m$ for some $m$. Now, $p_1 < 0$ implies that $m < 3$, while $p_3 > p_2 = 0$ implies that $m > 1$. Thus, $m = 2$, so $p_1 + p_3 = p_2 = 0$, meaning $p_3 = -p_1$.

    Therefore, $[\theta_1\del,\theta_3\del]$ has a pole of order 1 at $\lambda$. But $[\theta_1\del,\theta_3\del] \in \spn\{\theta_1'\del,\theta_2'\del,\theta_3'\del,\ldots\}$, leading to a contradiction. This concludes the proof.
\end{proof}

\section{Universal property}\label{sec:universal}

Up until now, all our results suggest that submodule-subalgebras of $\W$ are very rigid, in the sense that their Lie algebraic properties are controlled by $\W$. Indeed, we have shown that their derivations and automorphisms are restrictions of derivations and automorphisms of $\W$, and that their one-dimensional extensions are all contained in $\W$. Therefore, even when studying submodule-subalgebras of $\W$ as abstract Lie algebras, they still seem to remember $\W$.

This rigidity is further demonstrated in \cite{PetukhovSierra}, where the authors focused on the Poisson algebra structure of symmetric algebras of various infinite-dimensional Lie algebras. In particular, they show that the only elements of $\W^*$ which vanish on a nontrivial Poisson ideal of $S(\W)$ are what they call \emph{local functions}; these are functions given by linear combinations of derivatives at a finite set of points. They also show that the same result holds for $S(\W(f))$, where $f \in \kk[t]$: the only functions in $\W(f)^*$ which vanish on a nontrivial Poisson ideal of $S(\W(f))$ are restrictions of local functions.

In this section, we show that $\W$ can be intrinsically reconstructed from any of its subalgebras of finite codimension, purely by considering their Lie algebraic structure. In particular, we prove that $\W$ is, in the appropriate sense, a universal finite-dimensional extension of any of its subalgebras of finite codimension, which provides an explanation for the rigidity observed above.

Our goal is to state and prove the universal property satisfied by $\W$ as an extension of any of its subalgebras of finite codimension. We first define the types of extensions we want to consider.

\begin{dfn}\label{dfn:completely non-split extension}
    Let $L$ be a Lie algebra. We say that $\overline{L}$ is a \emph{completely non-split extension} of $L$ if $\overline{L}$ is a Lie algebra containing $L$ such that there exists a chain of Lie algebras
    $$L_0 = L \subseteq L_1 \subseteq L_2 \subseteq \ldots \subseteq L_n = \overline{L},$$
    where for all $i$, the representation $L_{i+1}/L_i$ of $L_i$ is finite-dimensional and irreducible, and the following short exact sequence of $L_i$-representations is non-split:
    $$0 \to L_i \to L_{i+1} \to L_{i+1}/L_i \to 0.$$
\end{dfn}

Applying Proposition \ref{prop:one-dim extensions exist} inductively, we can easily see that $\W$ is a completely non-split extension of any of its subalgebras of finite codimension.

\begin{cor}\label{cor:completely non-split extension}
    Let $L$ be a subalgebra of $\W$ of finite codimension. Then $\W$ is a completely non-split extension of $L$. \qed
\end{cor}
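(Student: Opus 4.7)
The plan is to prove this by straightforward induction on $\codim_{\W}(L)$, using Proposition \ref{prop:one-dim extensions exist} as the inductive step. The base case is $L = \W$ (codimension $0$), where the trivial chain $L_0 = \W$ with no extensions satisfies the definition vacuously.

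For the inductive step, suppose $L$ is a proper subalgebra of $\W$ of finite codimension $n \geq 1$. I would apply Proposition \ref{prop:one-dim extensions exist} to obtain a Lie algebra $L \subsetneq L_1 \subseteq \W$ with $\dim(L_1/L) = 1$ such that the short exact sequence of $U(L)$-modules
$$0 \to L \to L_1 \to L_1/L \to 0$$
is non-split. Since $L_1/L$ is one-dimensional, it is automatically finite-dimensional and irreducible as a representation of $L$, so this is the first step in the chain required by Definition \ref{dfn:completely non-split extension}. Now $L_1$ is a subalgebra of $\W$ of codimension $n - 1$, so by the induction hypothesis there exists a chain $L_1 \subseteq L_2 \subseteq \ldots \subseteq L_n = \W$ witnessing that $\W$ is a completely non-split extension of $L_1$. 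Prepending $L_0 = L$ to this chain gives the required chain for $L$.

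There is essentially no obstacle here: all the real work has already been done in Proposition \ref{prop:one-dim extensions exist}, which both produces the one-dimensional extension inside $\W$ and guarantees non-splitness. The only thing to check is that the induction terminates, which is immediate from the finiteness of $\codim_{\W}(L)$ and the fact that each application of Proposition \ref{prop:one-dim extensions exist} strictly decreases the codimension by $1$.
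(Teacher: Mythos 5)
Your proposal is correct and matches the paper exactly: the paper derives this corollary by "applying Proposition \ref{prop:one-dim extensions exist} inductively," which is precisely the induction on $\codim_{\W}(L)$ you carry out. The details you fill in (one-dimensional quotients are automatically irreducible, codimension drops by one at each step) are the right ones and require no further justification.
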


Note that the trivial one-dimensional representation is the only irreducible finite-dimensional representation of $\W$, since $\W$ is simple. By Proposition \ref{prop:derivations of W1},
$$\Ext_{U(\W)}^1(\kk,\W) = 0,$$
so $\W$ does not have any completely non-split extensions. Given a subalgebra $L$ of $\W$ of finite codimension, it is therefore reasonable to expect $\W$ to be the unique maximal completely non-split extension of $L$. We will see that this is indeed the case.

Thanks to Corollary \ref{cor:irreps are one-dim}, in order to study completely non-split extensions of subalgebras of $\W$, we only need to consider one-dimensional extensions. In particular, we will be able to use our results from Section \ref{sec:extensions}.

\begin{thm}\label{thm:universal property}
    Let $L$ be a subalgebra of $\W$ of finite codimension. Then $\W$ is the universal completely non-split extension of $L$, in the following sense: if $\overline{L}$ is another completely non-split extension of $L$, then $\overline{L}$ can be uniquely embedded in $\W$ such that the diagram
    \begin{center}
        \begin{tikzcd}
            L \arrow[d, hook] \arrow[r, hook] & \W \\
            \overline{L} \arrow[ru, hook]     &   
        \end{tikzcd}
    \end{center}
    commutes.
\end{thm}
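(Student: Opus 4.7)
The plan is to prove this by induction on the length $n$ of a chain witnessing that $\overline{L}$ is a completely non-split extension of $L$. The base case $n = 0$ is trivial since then $\overline{L} = L$ and we take the identity embedding. For the inductive step, suppose we have a chain
$$L = L_0 \subseteq L_1 \subseteq \ldots \subseteq L_n = \overline{L}$$
of length $n \geq 1$. Then $L_{n-1}$ is a completely non-split extension of $L$ via the truncated chain of length $n-1$, so by the inductive hypothesis there is a unique embedding $L_{n-1} \hookrightarrow \W$ extending $L \hookrightarrow \W$. Identify $L_{n-1}$ with its image in $\W$. Since $L \subseteq L_{n-1}$ and $L$ has finite codimension in $\W$, so does $L_{n-1}$.

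Now the key reduction is that $L_n/L_{n-1}$, being finite-dimensional and irreducible as a representation of $L_{n-1}$, must be one-dimensional by Corollary \ref{cor:irreps are one-dim}. Hence $L_n$ is a (non-split) one-dimensional extension of $L_{n-1}$, and Proposition \ref{prop:extensions are in W} provides a unique embedding $L_n \hookrightarrow \W$ extending the inclusion $L_{n-1} \hookrightarrow \W$. Composing with the identification of $L_{n-1}$ as a subalgebra of $\W$, we obtain the desired embedding $\overline{L} = L_n \hookrightarrow \W$ extending $L \hookrightarrow \W$.

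For uniqueness of this embedding (independent of the chosen chain), I would argue as follows. Suppose $\varphi_1, \varphi_2 \colon \overline{L} \hookrightarrow \W$ are two embeddings both restricting to the inclusion on $L$. Fix any chain $L = L_0 \subseteq \ldots \subseteq L_n = \overline{L}$ as above and prove by induction on $i$ that $\varphi_1|_{L_i} = \varphi_2|_{L_i}$. For the inductive step, $\varphi_1(L_i)$ and $\varphi_2(L_i)$ are both one-dimensional non-split extensions of the common subalgebra $\varphi_1(L_{i-1}) = \varphi_2(L_{i-1}) \subseteq \W$, so the uniqueness clause of Proposition \ref{prop:extensions are in W} forces $\varphi_1(L_i) = \varphi_2(L_i)$ and in fact $\varphi_1|_{L_i} = \varphi_2|_{L_i}$.

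The proof itself is therefore a short induction; all the real work was done earlier in establishing Proposition \ref{prop:extensions are in W} (one-dimensional extensions of finite codimension subalgebras of $\W$ live canonically inside $\W$) and Corollary \ref{cor:irreps are one-dim} (finite-dimensional irreducible $L$-representations are one-dimensional, via the strong residual solvability of $L$ and Lie's theorem). The only thing to be careful about is verifying at each inductive step that $L_{n-1}$ has finite codimension in $\W$ so that Corollary \ref{cor:irreps are one-dim} applies; this is immediate from $L \subseteq L_{n-1}$, but should be noted explicitly.
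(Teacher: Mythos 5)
Your proof is correct. The existence half is essentially the paper's argument: induct along the chain, use Corollary \ref{cor:irreps are one-dim} to see each successive quotient is one-dimensional, and apply Proposition \ref{prop:extensions are in W} at each step. Where you genuinely diverge is in the uniqueness half. The paper takes the two embeddings $\varphi,\psi \colon \overline{L} \hookrightarrow \W$, forms the isomorphism $\psi \circ \varphi^{-1}$ between their images (which are finite-codimension subalgebras of $\W$ containing $L$), extends it to an automorphism of $\W$ via Theorem \ref{thm:extending isomorphisms}, and then invokes the explicit description $\Aut(\W) \cong \kk \rtimes \kk^*$ of Proposition \ref{prop:Rudakov} to see that an automorphism fixing $L$ pointwise must be the identity. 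You instead run a second induction along the chain and at each step invoke only the uniqueness clause of Proposition \ref{prop:extensions are in W}: two embeddings of $L_i$ agreeing on $L_{i-1}$ must coincide. Your route is more self-contained and more elementary --- it avoids both the isomorphism-extension theorem and Rudakov's classification of $\Aut(\W)$, and for that reason it would transfer more readily to settings where the automorphism group is not known explicitly. What the paper's route buys is that it never needs to re-examine the chain for uniqueness: it works directly with the images $\varphi(\overline{L})$ and $\psi(\overline{L})$ as abstract finite-codimension subalgebras, so the uniqueness statement is manifestly independent of any choice of chain from the outset (in your version this independence is also immediate, since the chain is only a tool in the argument, but it is worth saying so, as you do). The one point you should make explicit if writing this up is that the uniqueness clause of Proposition \ref{prop:extensions are in W} is a statement about the embedding as a map, not merely about its image; this is what the iterated step $\varphi_1|_{L_i} = \varphi_2|_{L_i}$ requires, and it is indeed what that proposition asserts.
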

\begin{proof}
    By Corollary \ref{cor:completely non-split extension}, we know that $\W$ is a completely non-split extension of $L$.

    Suppose $\overline{L}$ is another completely non-split extension of $L$. Therefore, we have a chain of Lie algebras
    $$L_0 = L \subseteq L_1 \subseteq L_2 \subseteq \ldots \subseteq L_n = \overline{L},$$
    where for all $i$, the representation $L_{i+1}/L_i$ of $L_i$ is finite-dimensional and irreducible, and the short exact sequence of $L_i$-representations
    $$0 \to L_i \to L_{i+1} \to L_{i+1}/L_i \to 0$$
    is non-split. By Corollary \ref{cor:irreps are one-dim}, $L_1/L$ is one-dimensional, so $L_1$ is a one-dimensional extension of $L$, and thus $L_1$ can be uniquely embedded in $\W$ in a way that makes the following diagram commute:
    \begin{center}
        \begin{tikzcd}
            L \arrow[d, hook] \arrow[r, hook] & \W \\
            L_1 \arrow[ru, hook]     &   
        \end{tikzcd},
    \end{center}
    by Proposition \ref{prop:extensions are in W}. Continuing inductively, we conclude that there exists an injective homomorphism $\varphi \colon \overline{L} \to \W$ such that
    \beq\label{eq:commuting triangle in theorem}
        \begin{tikzcd}
            L \arrow[d, hook] \arrow[r, hook] & \W \\
            \overline{L} \arrow[ru, hook, "\varphi"']     &   
        \end{tikzcd}
    \eeq
    commutes.

    To prove uniqueness of $\varphi$, suppose $\psi \colon \overline{L} \to \W$ is another injective homomorphism making \eqref{eq:commuting triangle in theorem} commute. Let $L_1 = \varphi(\overline{L})$ and $L_2 = \psi(\overline{L})$. These are two isomorphic subalgebras of $\W$ which contain $L$. Let $\rho = \psi \circ \varphi^{-1}$ be the isomorphism between $L_1$ and $L_2$. Since $\rho$ is an isomorphism between two subalgebras of $\W$ of finite codimension, Theorem \ref{thm:extending isomorphisms} and Proposition \ref{prop:Rudakov} imply that $\rho = \restr{\rho_{x;\alpha}}{L_1}$ for some $x \in \kk, \alpha \in \kk^*$. Note that $\restr{\rho_{x;\alpha}}{L} = \restr{\rho}{L} = \id_L$, since $\restr{\varphi}{L} = \restr{\psi}{L}$. This forces $\alpha = 1$ and $x = 0$, so that $\rho_{x;\alpha} = \rho_{0;1} = \id_{\W}$. Hence, $\psi = \varphi$.
\end{proof}

\section{Subalgebras of infinite codimension}\label{sec:infinite codimension}

Having studied derivations and extensions of subalgebras of $\W$ of finite codimension, we now consider the situation in infinite codimension. Although there are some similarities with the situation in finite codimension, we will see that there are many differences.

We will need the following subalgebras, first defined in \cite{Buzaglo}.

\begin{ntt}\label{ntt:L(f,g)}
    For $f,g \in \kk[t] \nonzero$ such that $f'g \in \kk[f]$ (in other words, $f'g = h(f)$ for some $h \in \kk[t]$), we write $L(f,g) = \kk[f]g\del$.

    Letting $g_f \in \kk[t]$ be the unique monic polynomial of minimal degree such that $f'g_f \in \kk[f]$, we write $L(f)$ instead of $L(f,g_f)$. By \cite[Proposition 4.13]{Buzaglo}, $L(f,g) \subseteq L(f)$ for all $f,g \in \kk[t]$ such that $f'g \in \kk[f]$.
\end{ntt}

Let $f,g \in \kk[t] \nonzero$ such that $f'g \in \kk[f]$. Certainly, if $\deg(f) \geq 2$, then $L(f,g)$ has infinite codimension in $\W$. In this case, $L(f,g)$ is clearly not a submodule-subalgebra of $\W$, but it is still a $\kk[f]$-submodule. Note that if $\deg(f) = 1$, then $L(f,g) = \W(g)$.

In \cite{Buzaglo}, it was shown that $L(f,g)$ is a Lie algebra and that $L(f,g)$ has a familiar Lie algebra structure: it is isomorphic to a submodule-subalgebra of $\W$.

\begin{lem}[{\cite[Lemma 4.12]{Buzaglo}}]\label{lem:L(f,g)}
    Let $f,g \in \kk[t] \nonzero$ such that $f'g = h(f) \in \kk[f]$ for some $h \in \kk[t]$. Then $L(f,g)$ is a Lie subalgebra of $\W$ and $L(f,g) \cong \W(h)$.
\end{lem}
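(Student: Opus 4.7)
The plan is to split the proof into two parts: first, verify that $L(f,g) = \kk[f]g\del$ is closed under the bracket of $\W$, and second, construct an explicit isomorphism $L(f,g) \to \W(h)$. Both parts are ultimately direct computations; the hypothesis $f'g = h(f)$ enters at exactly one crucial step in each.

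For closure, I would compute directly: for $p, q \in \kk[t]$,
$$[p(f)g\del,\, q(f)g\del] = \bigl(p(f)g \cdot (q(f)g)' - (p(f)g)' \cdot q(f)g\bigr)\del.$$
Expanding $(q(f)g)' = q'(f)f'g + q(f)g'$ and similarly for the other factor, the $p(f)q(f)gg'$ cross-terms cancel, leaving $(p(f)q'(f) - p'(f)q(f))\,f'g^2\,\del$. Now substituting $f'g = h(f)$, this becomes $h(f)(pq' - p'q)(f)\cdot g\del$, which lies in $\kk[f]g\del = L(f,g)$.

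For the isomorphism, the key observation is that any element $q(f)g\del \in L(f,g)$ preserves the subring $\kk[f] \subseteq \kk[t]$, since $q(f)g\del(f) = q(f)g f' = q(f)h(f) \in \kk[f]$. Thus $L(f,g)$ acts by derivations on $\kk[f]$. Identifying $\kk[f]$ with a polynomial ring $\kk[s]$ via the ring isomorphism $s \mapsto f$ (which is well-defined whenever $\deg f \geq 1$, the only nontrivial case), this induced action gives a map
$$\phi \colon L(f,g) \to \W, \qquad q(f)g\del_t \mapsto q(s)h(s)\del_s.$$
The image lies in $\W(h) = h\kk[s]\del_s$, and $\phi$ is clearly surjective onto $\W(h)$. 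Injectivity follows from the injectivity of $\kk[s] \hookrightarrow \kk[t]$, $s \mapsto f$: if $q(s)h(s) = 0$ in $\kk[s]$ and $h \neq 0$, then $q = 0$, so $q(f)g\del_t = 0$.

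Finally, I would verify that $\phi$ is a Lie algebra homomorphism by computing the bracket on the $\W(h)$-side: $[p(s)h(s)\del_s, q(s)h(s)\del_s] = h(s)^2(pq' - p'q)(s)\,\del_s$, matching $\phi$ applied to the bracket computed in the first step. The main potential obstacle is just keeping careful track of the two variables $t$ and $s$ and ensuring the substitution $s = f(t)$ is compatible on both sides; there is no conceptual difficulty, and the hypothesis $f'g = h(f)$ is precisely what makes the two computations agree.
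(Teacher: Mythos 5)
Your proof is correct, and the computations check out: the bracket $[p(f)g\del,q(f)g\del]=(pq'-p'q)(f)\,f'g^2\,\del$ combined with $f'g=h(f)$ gives closure, and the restriction of these derivations to the invariant subring $\kk[f]\cong\kk[s]$ yields the isomorphism onto $\W(h)$. The paper itself gives no proof of this lemma (it is quoted from \cite[Lemma 4.12]{Buzaglo}), but your argument is the natural one and matches the cited source's strategy; your aside that $\deg f\geq 1$ is needed is apt, since for constant $f$ the statement degenerates and is implicitly excluded.
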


In some sense, these are infinite codimension analogues of submodule-subalgebras: all infinite-dimensional subalgebras of $\W$ are very similar to subalgebras of the form $L(f,g)$, just like subalgebras of finite codimension are very similar to submodule-subalgebras (cf. Proposition \ref{prop:Alexey}).

\begin{thm}[{\cite{BellBuzaglo}}]\label{thm:Jason}
    Let $L$ be an infinite-dimensional subalgebra of $\W$. Then there exist $f,g \in \kk[t]$ such that $f'g \in \kk[f]$ and
    $$L(f,g) \subseteq L \subseteq L(f).$$
    In particular, $L$ has finite codimension in $L(f)$, so $L$ is isomorphic to a subalgebra of $\W$ of finite codimension.
\end{thm}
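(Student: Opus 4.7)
The plan is to extract the polynomial $f$ from the structure of $L$ viewed inside $\kk[t]\del$. Set $J := \{h \in \kk[t] : h\del \in L\}$, so $L = J\del$; then $J$ is an infinite-dimensional $\kk$-subspace of $\kk[t]$ closed under the Wronskian bracket $\{h_1, h_2\} := h_1 h_2' - h_1' h_2$. The theorem becomes the statement that $J$ sits between $\kk[f]g$ and $\kk[f]g_f$ for suitable $f, g \in \kk[t]$, which I would extract from the pair (vector space $J$, Wronskian bracket). The strategy is parallel in spirit to Proposition \ref{prop:Alexey} in the finite-codimension case: first one identifies a natural ``envelope'' $L(f)$ of $L$, and then one exhibits a $\kk[f]$-submodule sitting inside $L$.

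The first and hardest step is to introduce the multiplier subalgebra $A := \{p \in \kk[t] : pJ \subseteq J\}$ and show that $A$ contains a nonconstant polynomial $f$ with $\deg f$ large enough to force $J$ into a single rank-one free $\kk[f]$-submodule of $\kk[t]$. Producing a nonconstant element of $A$ from only the Lie-algebra hypothesis is where bracket-closure really pays off: given $h_1, h_2 \in J$ of distinct degrees, iterated Wronskian brackets $\{h_1, h_2\}, \{h_1, \{h_1, h_2\}\}, \ldots$ produce a linearly recursively-generated infinite sequence in $J$ whose leading terms span increasingly large slices, and a careful degree-filtration analysis should extract a polynomial $f$ with $fJ \subseteq J$. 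Choosing $f$ of maximal such degree (bounded above by an elementary dimension count comparing $J$ to $\kk[t]$ as a free $\kk[f]$-module of rank $\deg f$), the bracket-closure of $J$ inside this free module should then force $J \subseteq \kk[f] g_0$ for a single $g_0$; the identity $\{g_0, fg_0\} = f' g_0^2$ forces $f' g_0 \in \kk[f]$, and the classical fact $\kk[t] \cap \kk(f) = \kk[f]$ (Lüroth) upgrades this to $g_0 \in g_f \kk[f]$, giving $L \subseteq L(f)$.

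For the other containment $L(f,g) \subseteq L$, take any nonzero $g \in J$: since $f \in A$, we have $f^k g \in J$ for all $k \geq 0$, hence $\kk[f]g \subseteq J$, i.e., $L(f,g) \subseteq L$. A short $\kk[f]$-module calculation shows $L(f)/L(f,g)$ is finite-dimensional, so $L$ has finite codimension in $L(f)$; combined with Lemma \ref{lem:L(f,g)}, which identifies $L(f)$ with $\W(h)$ for some $h \in \kk[t]$, this realises $L$ as a finite-codimension subalgebra of $\W(h) \subseteq \W$. The main obstacle, as indicated above, is the step producing the nonconstant multiplier $f \in A$ from bracket data and pinning down the correct rank-one $\kk[f]$-submodule of $\kk[t]$ containing $J$; this is where the interplay between the Wronskian bracket and the $\kk[f]$-module structure must be exploited delicately. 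It is quite possible that the argument in \cite{BellBuzaglo} follows a different route here, for example via Gelfand--Kirillov dimension or growth arguments on $L$, which might sidestep the direct module-theoretic analysis proposed above.
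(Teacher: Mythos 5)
First, a point of reference: the paper contains no proof of Theorem \ref{thm:Jason} to compare against. It is quoted from \cite{BellBuzaglo}, listed as ``in preparation'', so your argument has to be judged on its own merits rather than against the paper's route.

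Judged that way, there is a genuine gap, and it is not merely the admitted incompleteness of the ``hardest step'': the mechanism you propose is provably wrong. Everything rests on finding a \emph{nonconstant} $f$ in the multiplier algebra $A = \{p \in \kk[t] : pJ \subseteq J\}$ which also satisfies $J \subseteq \kk[f]g_0$, and on the ensuing deduction that $\kk[f]g \subseteq J$ for an \emph{arbitrary} nonzero $g \in J$. Test this on a subalgebra $L$ of finite codimension in $\W$ which is not a $\kk[t]$-submodule (such $L$ exist and are treated at length in the paper: Proposition \ref{prop:Alexey} only sandwiches $L$ between submodules, and Proposition \ref{prop:codimension abelianisation} and Lemma \ref{lem:not an interval} are precisely about detecting the non-submodule ones; e.g.\ $\W(f_0) \oplus \kk\,w$ for suitable $w \in \W(\rad(f_0)) \setminus \W(f_0)$ with $\rad(f_0) \neq f_0$). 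Since $\kk[f]g_0$ has infinite codimension in $\kk[t]$ whenever $\deg f \geq 2$ (its degrees form an arithmetic progression of gap $\deg f$), the containment $J \subseteq \kk[f]g_0$ forces $\deg f = 1$ for such $L$; but then $f \in A$ says exactly that $J$ is an ideal of $\kk[t]$, i.e.\ that $L$ is a submodule-subalgebra --- a contradiction. Independently, the final step fails even when nonconstant multipliers do exist: closure of $J$ under the Wronskian bracket gives no closure under multiplication, and with $\deg f = 1$ your argument would conclude $\W(g) \subseteq L$ for \emph{every} $g\del \in L$, which is false for every non-submodule $L$ by definition. The correct shape of the lower containment, as in Proposition \ref{prop:Alexey}, is that \emph{some specific} $g$ works, and producing that $g$ is the real content of the theorem; iterated brackets manufacture more elements of $J$, not multipliers of $J$. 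The pieces of your sketch that do hold up --- the sandwich structure, the identity $\{g_0, fg_0\} = f'g_0^2$ forcing $f'g_0 \in \kk[f]$ once $J \subseteq \kk[f]g_0$ and $fg_0 \in J$ are known, and the use of $\kk[t] \cap \kk(f) = \kk[f]$ --- all sit downstream of the step that fails.
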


Theorem \ref{thm:Jason} implies that our results on derivations and extensions can be translated to arbitrary infinite-dimensional subalgebras of $\W$. Therefore, there are many similarities between infinite codimension and finite codimension subalgebras. However, there are also some key differences, which we highlight at the end of the section.

For example, Lemma \ref{lem:L/I is solvable} and Theorem \ref{thm:Jason} immediately imply:

\begin{cor}
    If $L$ is a proper infinite-dimensional subalgebra of $\W$, then $L$ is strongly residually solvable. \qed
\end{cor}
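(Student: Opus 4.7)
The plan is to split into two cases according to the codimension of $L$ in $\W$ and reduce both to Lemma \ref{lem:L/I is solvable}, using that strong residual solvability is an isomorphism invariant (both clauses of the definition — residual solvability and solvability of every finite-dimensional quotient — are clearly abstract Lie-algebraic properties).

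In the finite codimension case there is nothing to do: since $L$ is proper and of finite codimension, Lemma \ref{lem:L/I is solvable} applies directly. For the infinite codimension case, I invoke Theorem \ref{thm:Jason} to obtain $f,g \in \kk[t]$ with $f'g \in \kk[f]$ and $L(f,g) \subseteq L \subseteq L(f)$. I then need $\deg(f) \geq 2$, which is easy to check: if $\deg(f) = 1$, then after a linear change of variable $f = t$, so $f'g = g \in \kk[t] = \kk[f]$ gives $L(f) = \kk[t]\del = \W$, forcing $L$ to have finite codimension in $\W$, contrary to assumption. With $\deg(f) \geq 2$, Lemma \ref{lem:L(f,g)} gives an isomorphism $L(f) \cong \W(h)$ for some $h \in \kk[t]$; a short degree count in the relation $f'g_f = h(f)$ shows $\deg(h) \geq 1$, so $\W(h)$ is a proper subalgebra of $\W$. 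Via this isomorphism, $L$ is identified with a subalgebra of $\W(h) \subsetneq \W$ of finite codimension in $\W(h)$, hence of finite codimension in $\W$; being contained in $\W(h)$, this image is proper. Lemma \ref{lem:L/I is solvable} then applies to the image, and the isomorphism invariance of strong residual solvability transports the conclusion back to $L$.

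There is no real obstacle here — the entire argument is bookkeeping once Theorem \ref{thm:Jason} and Lemma \ref{lem:L(f,g)} are in hand, which is presumably why the excerpt flags this corollary as immediate. The only point that requires even a moment's attention is verifying that the degree bound $\deg(f) \geq 2$ (equivalently, that the image of $L$ inside $\W$ under the isomorphism from Theorem \ref{thm:Jason} is proper), so that Lemma \ref{lem:L/I is solvable} is actually applicable; everything else is automatic.
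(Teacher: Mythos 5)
Your proof is correct and matches the paper's intended argument: the paper derives this corollary directly from Lemma \ref{lem:L/I is solvable} and Theorem \ref{thm:Jason}, exactly the reduction you carry out. Your only additions are the (correct) bookkeeping details the paper leaves implicit — the case split on codimension, the check that $\deg(f) \geq 2$ forces the image of $L$ inside $\W(h)$ to be a proper finite-codimension subalgebra of $\W$, and the observation that strong residual solvability is an isomorphism invariant.
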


As a consequence of our study of derivations of submodule-subalgebras of $\W$, we can deduce that $L(f)$ has no outer derivations. In order to see this, we require the following result, which shows that for any $f \in \kk[t] \setminus \kk$, the subalgebra $L(f)$ is isomorphic to a submodule-subalgebra $\W(h)$ with $h \in \kk[t]$ a reduced polynomial.

\begin{prop}\label{prop:h_f is reduced}
    Let $f \in \kk[t] \setminus \kk$ and let $h \in \kk[t]$ such that $f'g_f = h(f) \in \kk[f]$. Then $h$ is reduced (i.e. $h$ is a scalar multiple of $\rad(h)$).
\end{prop}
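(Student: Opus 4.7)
The plan is to argue by contradiction using the minimality in the definition of $g_f$. Suppose $h$ is not reduced. Since $\kk$ is algebraically closed, we may write $h = (t - \alpha)^s k(t)$ with $\alpha \in \kk$, $s \geq 2$, and $k(\alpha) \neq 0$. Then the identity $f'g_f = h(f) = (f - \alpha)^s k(f)$ will let us produce a strictly smaller monic polynomial $\tilde g$ with $f'\tilde g \in \kk[f]$, contradicting minimality.

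The key step I would prove is that $(f - \alpha)$ divides $g_f$ in $\kk[t]$. Granting this for a moment, set $g = g_f / (f - \alpha)$, which is a polynomial of degree $\deg(g_f) - \deg(f) < \deg(g_f)$. Then
\[
    f' g = \frac{(f - \alpha)^s k(f)}{f - \alpha} = (f - \alpha)^{s-1} k(f) \in \kk[f].
\]
Rescaling $g$ by the inverse of its leading coefficient yields a monic polynomial $\tilde g$ of strictly smaller degree than $g_f$ with $f'\tilde g \in \kk[f]$, contradicting the defining minimality of $g_f$.

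To establish $(f - \alpha) \mid g_f$, I would compare multiplicities at each root $\beta$ of $f - \alpha$. If $\beta$ appears in $f - \alpha$ with multiplicity $m = m_\beta$, then a short computation shows $(t - \beta)^{m-1}$ divides $f'$ exactly (differentiate $(t - \beta)^m u$ with $u(\beta) \neq 0$). On the other hand, $(f - \alpha)^s k(f)$ vanishes at $\beta$ to order exactly $ms$, since $k(f)(\beta) = k(\alpha) \neq 0$. Comparing orders in $f' g_f = (f-\alpha)^s k(f)$, the multiplicity of $\beta$ in $g_f$ is exactly $ms - (m - 1) = m(s - 1) + 1$, which exceeds $m$ whenever $s \geq 2$. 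Running this over every root $\beta$ of $f - \alpha$ (with multiplicities) shows $(f - \alpha) \mid g_f$.

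I do not expect any serious obstacle; the argument is purely a multiplicity bookkeeping. The only subtlety worth flagging is that $f$ need not be monic, but this only affects the final rescaling of $g$ and does not interfere with the degree drop.
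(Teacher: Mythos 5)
Your proof is correct, and it takes a genuinely different route from the paper's. The paper does not argue by contradiction: it uses the fact (imported from the proof of \cite[Proposition 4.13]{Buzaglo}) that $h$ generates the ideal $I(f) = \{p \in \kk[t] \mid f' \text{ divides } p(f)\}$, writes down the explicit reduced candidate $\widetilde{h} = \prod_{\lambda \in f(V(f'))}(t - \lambda)$, and shows $\widetilde{h} \in I(f)$ by comparing $\ord_\lambda(f')$ with $\ord_\lambda(\widetilde{h}(f))$ at each critical point $\lambda$ of $f$; since $\widetilde{h}$ divides $h$ and lies in $(h)$, the two are associates. You instead work directly from the minimality in the definition of $g_f$: a repeated root $\alpha$ of $h$ forces, by the same kind of multiplicity bookkeeping at the points $\beta$ with $f(\beta) = \alpha$ (where $\ord_\beta(f') = m_\beta - 1$ while $\ord_\beta((f-\alpha)^s k(f)) = m_\beta s$), that $f - \alpha$ divides $g_f$, and dividing it out produces a smaller monic witness. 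The underlying computation is the same order-of-vanishing comparison at preimages of critical values, but your argument is self-contained --- it avoids the external input that $h$ generates $I(f)$ --- whereas the paper's argument buys more, namely the explicit identification of the roots of $h$ as exactly the critical values of $f$. Your handling of the non-monic leading coefficient of $f$ is also fine, since it is a unit and does not affect divisibility.
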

\begin{proof}
    Certainly, in order to have $f'g_f = h(f)$, it must be the case that $f'$ divides $h(f)$. In particular, this implies that $h$ must vanish on $f(V(f')) = \{f(\lambda) \mid f'(\lambda) = 0\}$. We claim that these are all the roots of $h$, in other words, $h$ is a scalar multiple of the polynomial
    $$\widetilde{h} = \prod_{\lambda \in f(V(f'))}(t - \lambda).$$
    This will certainly imply that $h$ is reduced, as required.
    
    In the proof of \cite[Proposition 4.13]{Buzaglo}, it was shown that $h$ is the generator of the ideal
    $$I(f) \coloneqq \{p \in \kk[t] \mid f' \text{ divides } p(f)\}$$
    of $\kk[t]$. As mentioned in the first paragraph, any element of $I(f)$ must vanish on $f(V(f'))$, so $\widetilde{h}$ divides $h$. Therefore, it suffices to show that $\widetilde{h} \in I(f) = (h)$, that is, $f'$ divides $\widetilde{h}(f) = \prod_{\lambda \in f(V(f'))}(f - \lambda)$.

    Let $\lambda \in V(f')$. Then $(t - f(\lambda))$ divides $\widetilde{h}$ by definition, so $f_\lambda \coloneqq f - f(\lambda)$ divides $\widetilde{h}(f)$. Note that $\ord_\lambda(f_\lambda') = \ord_\lambda(f_\lambda) - 1$, since $f_\lambda$ vanishes at $\lambda$. But $f_\lambda' = f'$, so this means that $\ord_\lambda(f') = \ord_\lambda(f_\lambda) - 1$. Therefore,
    $$\ord_\lambda(\widetilde{h}(f)) \geq \ord_\lambda(f_\lambda) > \ord_\lambda(f')$$
    for all $\lambda \in V(f')$. This implies that $f'$ divides $\widetilde{h}(f)$, as claimed.
\end{proof}

Theorem \ref{thm:derivations of W(f)}, Lemma \ref{lem:L(f,g)}, and Proposition \ref{prop:h_f is reduced} immediately imply that $L(f)$ has no outer derivations.

\begin{cor}
    Let $f \in \kk[t] \setminus \kk$. Then all derivations of $L(f)$ are inner. In other words, $H^1(L(f);L(f)) = 0$. \qed
\end{cor}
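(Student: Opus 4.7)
The plan is to combine the three cited results directly. By Lemma \ref{lem:L(f,g)}, the Lie algebra $L(f) = L(f,g_f)$ is isomorphic to the submodule-subalgebra $\W(h)$, where $h \in \kk[t]$ is characterised by $f'g_f = h(f)$. Therefore, computing $H^1(L(f);L(f))$ reduces to computing $H^1(\W(h);\W(h))$.

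Next, I would invoke Proposition \ref{prop:h_f is reduced}, which tells us that this particular $h$ is reduced. Consequently, $\rad(h)$ is a scalar multiple of $h$, and in particular $\deg(h) = \deg(\rad(h))$, so also $\W(\rad(h)) = \W(h)$.

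Applying Theorem \ref{thm:derivations of W(f)} with this reduced $h$, we obtain
\[
\Der(\W(h)) = \{\ad_w \mid w \in \W(\rad(h))\} = \{\ad_w \mid w \in \W(h)\} = \Inn(\W(h)),
\]
and the dimension formula there gives $\dim H^1(\W(h);\W(h)) = \deg(h) - \deg(\rad(h)) = 0$. Transporting this back along the isomorphism $L(f) \cong \W(h)$ yields $H^1(L(f);L(f)) = 0$, which is the desired conclusion. There is no real obstacle here: the entire content of the corollary is packaged into the reducedness of $h$ established in Proposition \ref{prop:h_f is reduced}, and the only step to articulate carefully is the observation that reducedness of $h$ forces $\W(\rad(h)) = \W(h)$, so that the inner derivations exhaust $\Der(\W(h))$.
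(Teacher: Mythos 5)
Your proposal is correct and is exactly the argument the paper intends: it cites Lemma \ref{lem:L(f,g)}, Proposition \ref{prop:h_f is reduced}, and Theorem \ref{thm:derivations of W(f)} as "immediately implying" the corollary, and you have simply spelled out how they combine. The key observation you articulate --- that reducedness of $h$ gives $\W(\rad(h)) = \W(h)$, so the dimension formula yields $\deg(h) - \deg(\rad(h)) = 0$ --- is precisely the content being packaged.
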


Furthermore, we can compute derivations of arbitrary infinite-dimensional subalgebras of $\W$, generalising Corollary \ref{cor:derivations}. This further highlights the rigidity exhibited by $\W$: derivations of any infinite-dimensional subalgebra of $\W$ are controlled by $\W$.

\begin{thm}\label{thm:derivations of general subalgebras}
    Let $L$ be an infinite-dimensional subalgebra of $\W$. Then
    $$\Der(L) = \{\ad_w \mid w \in \W \text{ such that } \ad_w(L) \subseteq L\} \cong N_{\W}(L).$$
    In other words, derivations of $L$ are restrictions of derivations of $\W$. Consequently, $H^1(L;L) \cong N_{\W}(L)/L$.
\end{thm}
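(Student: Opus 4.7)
The plan is to reduce the statement to the finite-codimension case handled by Corollary \ref{cor:derivations}, via the isomorphism between $L(f)$ and a reduced submodule-subalgebra of $\W$. By Theorem \ref{thm:Jason}, we may choose $f, g \in \kk[t]$ with $L(f,g) \subseteq L \subseteq L(f)$ and $\codim_{L(f)}(L) < \infty$. If $\deg(f) \leq 1$ then $L(f)$ is already a submodule-subalgebra, so $L$ has finite codimension in $\W$ and the statement is exactly Corollary \ref{cor:derivations}. Henceforth assume $\deg(f) \geq 2$; by Lemma \ref{lem:L(f,g)} and Proposition \ref{prop:h_f is reduced} there is an isomorphism $\psi \colon L(f) \xrightarrow{\sim} \W(\widetilde h)$ with $\widetilde h \in \kk[t]$ reduced, and $\widetilde L := \psi(L)$ is a finite-codimension subalgebra of $\W(\widetilde h) \subseteq \W$.

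Given $d \in \Der(L)$, transport it to $\widetilde d := \psi \circ d \circ \psi^{-1} \in \Der(\widetilde L)$. By Corollary \ref{cor:derivations}, there exists $u \in \W$ with $\ad_u(\widetilde L) \subseteq \widetilde L$ and $\widetilde d = \ad_u|_{\widetilde L}$. The crux is to show $u \in \W(\widetilde h) = \psi(L(f))$; granting this, we write $u = \psi(w)$ for some $w \in L(f) \subseteq \W$, and since $\psi$ is a Lie algebra isomorphism, the identity $\ad_{\psi(w)} = \psi \circ \ad_w \circ \psi^{-1}$ yields $d = \ad_w|_L$ with $\ad_w(L) \subseteq L$. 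The reverse containment is immediate, and because the centraliser in $\W$ of any infinite-dimensional subalgebra vanishes, the map $N_\W(L) \to \Der(L)$, $w \mapsto \ad_w|_L$, is injective, giving $\Der(L) \cong N_\W(L)$.

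The main obstacle is the claim that $u \in \W(\widetilde h)$, which I expect to establish by contradiction. Write $u = q\del$ with $q \in \kk[t]$ and suppose $q(\xi) \neq 0$ for some $\xi \in V(\widetilde h)$; since $\widetilde h$ is reduced, $\ord_\xi(\widetilde h) = 1$. As $\widetilde L \subseteq \W(\widetilde h)$, every nonzero $v \in \widetilde L$ satisfies $\ord_\xi(v) \geq 1$; and $[u, v] \in \widetilde L$ gives $\ord_\xi([u, v]) \geq 1$. Since $\ord_\xi(q) = 0 \neq \ord_\xi(v)$, Lemma \ref{lem:order of vanishing} yields $\ord_\xi([u, v]) = \ord_\xi(v) - 1$, and so $\ord_\xi(v) \geq 2$. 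Iterating this argument inductively, one sees that $\ord_\xi(v) \geq k$ for every $k \geq 1$ and every nonzero $v \in \widetilde L$, forcing $\widetilde L = 0$. This contradicts the fact that $\widetilde L$ is infinite-dimensional, so $\widetilde h$ divides $q$ and $u \in \W(\widetilde h)$ as required.
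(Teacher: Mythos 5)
Your proof is correct and follows essentially the same route as the paper: reduce to the finite-codimension case via the isomorphism $L(f) \cong \W(h)$ with $h$ reduced (Lemma \ref{lem:L(f,g)} and Proposition \ref{prop:h_f is reduced}), apply Corollary \ref{cor:derivations} to the transported derivation, and then rule out $\ord_\xi(u) = 0$ at roots using Lemma \ref{lem:order of vanishing}. The only cosmetic difference is that the paper routes through Proposition \ref{prop:Alexey} and contradicts the minimality of $\ord_\xi$ on $\widetilde{L}$ in one step, whereas you iterate directly over $V(\widetilde{h})$ to push orders of vanishing to infinity; the contradiction mechanism is the same.
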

\begin{proof}
    Let $f \in \kk[t]$ such that $L$ has finite codimension in $L(f)$, which exists by Theorem \ref{thm:Jason}. Let $h \in \kk[t]$ such that $f'g_f = h(f)$, so that $L(f) \cong \W(h)$. Let $\varphi \colon L(f) \to \W(h)$ be an isomorphism, and let $\widetilde{L} = \varphi(L) \subseteq \W(h)$, which is a subalgebra of $\W$ of finite codimension.

    Let $d \in \Der(L)$, and let $\widetilde{d} = \varphi \circ d \circ \varphi^{-1} \in \Der(\widetilde{L})$. By Corollary \ref{cor:derivations}, $\widetilde{d} = \ad_{\widetilde{w}}$ for some $\widetilde{w} \in \W$ such that $\ad_{\widetilde{w}}(\widetilde{L}) \subseteq \widetilde{L}$. We claim that $\widetilde{w} \in \W(h)$.

    By Proposition \ref{prop:Alexey}, there exists $g \in \kk[t]$ such that $\W(g) \subseteq \widetilde{L} \subseteq \W(\rad(g))$. Furthermore, $\W(g) \subseteq \widetilde{L} \subseteq \W(h)$, so $h$ divides $g$. By Proposition \ref{prop:h_f is reduced}, $h$ is reduced, so $h$ divides $\rad(g)$. Therefore,
    $$\W(g) \subseteq \widetilde{L} \subseteq \W(\rad(g)) \subseteq \W(h).$$
    Hence, in order to prove the claim, it suffices to prove that $\widetilde{w} \in \W(\rad(g))$, in other words, that $\ord_\xi(\widetilde{w}) \geq 1$ for all $\xi \in V(g)$.

    Assume, for a contradiction, that $\ord_\xi(\widetilde{w}) = 0$ for some $\xi \in V(g)$. Let $\widetilde{x} \in \widetilde{L}$ such that $\ord_\xi(\widetilde{x})$ is minimal. Note that $\ord_\xi(\widetilde{x}) \geq 1$, since $\widetilde{x} \in \W(\rad(g))$. By Lemma \ref{lem:order of vanishing}, $\ord_\xi([\widetilde{w},\widetilde{x}]) = \ord_\xi(\widetilde{x}) - 1$. But $[\widetilde{w},\widetilde{x}] \in \widetilde{L}$, since $\widetilde{w} \in N_{\W}(\widetilde{L})$, contradicting the minimality of $\ord_\xi(\widetilde{x})$. This proves the claim.

    Now, we have
    $$d = \varphi^{-1} \circ \widetilde{d} \circ \varphi = \varphi^{-1} \circ \ad_{\widetilde{w}} \circ \varphi.$$
    Let $w = \varphi^{-1}(\widetilde{w}) \in L(f)$. Then,
    $$d(x) = \varphi^{-1}([\widetilde{w},\varphi(x)]) = [w,x] = \ad_w(x)$$
    for all $x \in L$. Hence, $d = \ad_w$, which concludes the proof.
\end{proof}

\begin{rem}
    In the proof of Theorem \ref{thm:derivations of general subalgebras}, it was shown that if $L$ is a subalgebra of $\W$, and $f \in \kk[t]$ is a reduced polynomial such that $L \subseteq \W(f)$, then $N_{\W}(L) \subseteq \W(f)$.
\end{rem}

Another consequence of our previous work is that any infinite-dimensional subalgebra $L \subseteq \W$ has a universal completely non-split extension isomorphic to $\W$, which follows by combining Theorems \ref{thm:universal property} and \ref{thm:Jason}.

\begin{cor}
    Let $L$ be an infinite-dimensional subalgebra of $\W$. Then $L$ has a universal completely non-split extension $\overline{L}$: if $X$ is another completely non-split extension of $L$ then $X$ embeds in $\overline{L}$ such that the following diagram commutes:
    \begin{center}
        \begin{tikzcd}
            {L} \arrow[r, hook] \arrow[d, hook]   & \overline{L} \\
            X\arrow[ru, hook] &   
        \end{tikzcd}.
    \end{center}
    Furthermore, $\overline{L} \cong \W$ as Lie algebras. \qed
\end{cor}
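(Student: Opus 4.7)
The plan is to bootstrap the finite-codimension universal property (Theorem \ref{thm:universal property}) to the infinite-codimension setting using Theorem \ref{thm:Jason}, which guarantees that every infinite-dimensional subalgebra of $\W$ is isomorphic (as an abstract Lie algebra) to a finite-codimension subalgebra. The key conceptual point is that the notion of completely non-split extension is defined purely in terms of the intrinsic Lie algebra structure of $L$ (finite-dimensional irreducible subquotients whose associated short exact sequences are non-split), so it transports along any isomorphism.

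First I would invoke Theorem \ref{thm:Jason} to produce $f,g \in \kk[t]$ with $f'g \in \kk[f]$ and $L(f,g) \subseteq L \subseteq L(f)$, so that $L$ has finite codimension in $L(f)$. Composing the inclusion $L \hookrightarrow L(f)$ with the isomorphism $L(f) \cong \W(h)$ of Lemma \ref{lem:L(f,g)} (where $h \in \kk[t]$ satisfies $f'g_f = h(f)$) yields an isomorphism $\varphi \colon L \xrightarrow{\sim} \widetilde{L}$ onto a subalgebra $\widetilde{L} \subseteq \W(h) \subseteq \W$ of finite codimension. I would then set $\overline{L} := \W$ and view $L$ as embedded in $\overline{L}$ via $\varphi$.

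By Theorem \ref{thm:universal property}, $\W$ is the universal completely non-split extension of $\widetilde{L}$. Pulling the corresponding chain of one-dimensional extensions $\widetilde{L} = \widetilde{L}_0 \subseteq \widetilde{L}_1 \subseteq \ldots \subseteq \widetilde{L}_n = \W$ back through $\varphi$ (and extending $\varphi$ one step at a time by choosing a preimage of a complementary line at each stage) shows that $\overline{L} = \W$ is itself a completely non-split extension of $L$. Conversely, given any completely non-split extension $X$ of $L$, I would transport the whole filtration of $X$ along $\varphi$ to obtain a completely non-split extension $X'$ of $\widetilde{L}$ equipped with a Lie algebra isomorphism $X \xrightarrow{\sim} X'$ extending $\varphi$. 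Applying Theorem \ref{thm:universal property} to $X'$ supplies a unique embedding $X' \hookrightarrow \W$ over $\widetilde{L}$, and composing with $X \xrightarrow{\sim} X'$ gives the required embedding $X \hookrightarrow \overline{L}$ over $L$.

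The only point requiring any care, rather than an outright obstacle, is checking that the completely non-split property is preserved when one transports a filtration across an isomorphism of the base Lie algebra; this is automatic since the definition only refers to subquotients as $L_i$-representations and to non-splitting of short exact sequences, both of which are preserved under Lie algebra isomorphism. The uniqueness clause in the original corollary is not asserted in this infinite-codimension version, so there is nothing further to verify beyond existence of the embedding, and the isomorphism $\overline{L} \cong \W$ holds by construction.
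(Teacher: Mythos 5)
Your proposal is correct and follows essentially the same route as the paper, which derives this corollary precisely by combining Theorem \ref{thm:Jason} (to identify $L$ with a finite-codimension subalgebra $\widetilde{L} \subseteq \W(h) \subseteq \W$) with Theorem \ref{thm:universal property}, the transport of the completely non-split property along the isomorphism being automatic since the definition is intrinsic to the abstract Lie algebra structure. Your explicit verification of that transport step is a reasonable elaboration of what the paper leaves implicit.
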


We now highlight the difference between the situations in finite and infinite codimension. Consider an infinite-dimensional subalgebra $L \subseteq \W$ of infinite codimension. Theorem \ref{thm:Jason} shows that $L$ is isomorphic to a subalgebra of $\W$ of finite codimension. Considering the universal property of Theorem \ref{thm:universal property}, it is natural to ask if the universal completely non-split extension $\overline{L}$ of $L$ embeds in $\W$ in a way that makes the diagram
\beq\label{eq:diagram does not embed}
    \begin{tikzcd}
        L \arrow[r, hook] \arrow[d, hook]   & \W \\
        \overline{L} \arrow[ru, hook] &   
    \end{tikzcd}
\eeq
commute. We show that this is not the case.

\begin{prop}\label{prop:no embedding}
    Let $L$ be an infinite-dimensional subalgebra of $\W$ of infinite codimension. Then the universal completely non-split extension $\overline{L}$ of $L$ cannot be embedded in $\W$ in a way that makes \eqref{eq:diagram does not embed} commute.
\end{prop}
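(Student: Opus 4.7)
The plan is a short codimension-counting argument assembled from three results already in the paper. First, the corollary immediately preceding the proposition gives an abstract isomorphism $\overline{L} \cong \W$ of Lie algebras. Second, the definition of a completely non-split extension (Definition \ref{dfn:completely non-split extension}) involves a \emph{finite} chain $L_0 = L \subseteq L_1 \subseteq \ldots \subseteq L_n = \overline{L}$ with each quotient $L_{i+1}/L_i$ finite-dimensional, so $\dim_\kk(\overline{L}/L) < \infty$. Third, the rigidity result of Du cited after Theorem \ref{thm:extending isomorphisms} says that every nonzero Lie algebra endomorphism of $\W$ is an automorphism. The combination of the first and third facts will force any hypothetical embedding $\overline{L} \hookrightarrow \W$ to be surjective, which the second fact will then turn into a contradiction with the infinite codimension of $L$ in $\W$.

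In detail, suppose for contradiction that $\varphi \colon \overline{L} \hookrightarrow \W$ is an embedding making \eqref{eq:diagram does not embed} commute. Pick any isomorphism $\psi \colon \W \xrightarrow{\sim} \overline{L}$. Then $\varphi \circ \psi$ is a nonzero Lie algebra endomorphism of $\W$, hence an automorphism by Du's theorem, so in particular $\varphi(\overline{L}) = \W$. Because the triangle commutes, $\varphi$ restricts to the inclusion on $L$, meaning $\varphi(L) = L$ as a subset of $\W$. Therefore
$$\codim_{\W}(L) = \dim_\kk(\W/L) = \dim_\kk\bigl(\varphi(\overline{L})/\varphi(L)\bigr) = \dim_\kk(\overline{L}/L) < \infty,$$
contradicting the hypothesis that $L$ has infinite codimension in $\W$.

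There is essentially no obstacle beyond lining up these three ingredients. The one subtlety worth flagging is the finite codimension of $L$ in $\overline{L}$: a priori one might imagine the ``universal'' completely non-split extension could be much larger than $L$, especially since it is abstractly isomorphic to the infinite-codimension ambient algebra $\W$. However, Definition \ref{dfn:completely non-split extension} pins down CNS extensions as only finitely many irreducible (hence, in our setting, one-dimensional by Corollary \ref{cor:irreps are one-dim} applied after transporting through Theorem \ref{thm:Jason}) steps above $L$. Once this is observed, the Du rigidity input does all the remaining work, and the contradiction is immediate.
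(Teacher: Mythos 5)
Your proof is correct and uses exactly the same three ingredients as the paper's own argument (the abstract isomorphism $\overline{L} \cong \W$, the finiteness of $\dim(\overline{L}/L)$ coming from the definition of a completely non-split extension, and Du's theorem that every nonzero endomorphism of $\W$ is an automorphism); the only cosmetic difference is that the paper concludes the composite endomorphism is not an automorphism because its image has infinite codimension, whereas you first invoke Du to get surjectivity and then contradict the infinite codimension. This is essentially the same proof.
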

\begin{proof}
    Suppose we can embed $\overline{L}$ in $\W$ such that \eqref{eq:diagram does not embed} commutes. Certainly, the image of $\overline{L}$ in $\W$ must have infinite codimension in $\W$. Therefore, the map
    $$\W \cong \overline{L} \hookrightarrow \W$$
    is a nonzero endomorphism of $\W$ which is not an automorphism. By \cite[Theorem 2.3]{Du}, this is impossible.
\end{proof}

\section{Finite codimension subalgebras of the full Witt algebra}\label{sec:Witt}

We define submodule-subalgebras of $W$ analogously to those of $\W$: they are Lie subalgebras of $W$ which are also submodules over $\kk[t,t^{-1}]$. They are denoted by $W(f) \coloneqq fW$, where $f \in \kk[t,t^{-1}]$. Since submodule-subalgebras of $W$ correspond to ideals of $\kk[t,t^{-1}]$, we will usually assume that $f \in \kk[t]$ and that $f(0) \neq 0$.

In this section, we prove results about derivations and extensions of submodule-subalgebras of the Witt algebra similar to those of $\W$. Most proofs are omitted, since they are identical to those in Sections \ref{sec:general submodule-subalgebras}--\ref{sec:universal}.

\subsection{Derivations}

We start by considering derivations of submodule-subalgebras of the Witt algebra, similarly to Section \ref{sec:general submodule-subalgebras}.

\begin{thm}\label{thm:Witt submodule-subalgebra derivations}
    Let $f \in \kk[t]$ such that $f(0) \neq 0$. We have $\Der(W(f)) = \{\ad_w \mid w \in W(\rad(f))\}$. Consequently, $\Der(W(f)) \cong W(\rad(f))$, and
    $$\dim H^1(W(f);W(f)) = \deg(f) - \deg(\rad(f)).$$
\end{thm}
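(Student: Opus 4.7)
The plan is to mirror the associated-graded strategy of Theorem \ref{thm:derivations of W(f)} essentially verbatim. The only ingredient requiring a new proof is an analogue of Proposition \ref{prop:derivations of Wn}, namely that every derivation of $W$ is inner; this replaces the use of graded derivations of $W_{\geq n}$ in the one-sided case. As the paper remarks, the associated graded of $W(f)$ under the degree filtration is the \emph{full} Witt algebra $W$, so this is the only preliminary needed.

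I would first prove $\Der(W) = \Inn(W) \cong W$ by applying Theorem \ref{thm:Farnsteiner} with $L = M = W$. Both hypotheses are verified exactly as in the proof of Proposition \ref{prop:derivations of W1}: $H^1(\kk e_0; \kk e_n) = 0$ for $n \neq 0$ by a dimension count, and $\Hom_{\kk e_0}(\kk e_n, \kk e_m) = 0$ for $n \neq m$ because $e_0$ acts with distinct eigenvalues. It remains to compute $\Der(W)_0$, and the commutator argument of Lemma \ref{lem:derivations of degree 0}---now with indices ranging over $\ZZ$ rather than $\ZZ_{\geq -1}$---forces any $d$ with $d(e_n) = \lambda_n e_n$ to satisfy $\lambda_n = n\lambda_1$, so every degree-zero derivation is a scalar multiple of $\ad_{e_0}$.

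With $\Der(W) = \Inn(W)$ in hand, the machinery of Section \ref{sec:general submodule-subalgebras} transfers to $W(f)$ with no essential change. I would define the degree, leading term, and $d$-compatibility of elements of $W(f)$ by the same formulas as before, construct the associated graded derivation $\gr(d) \in \Der(W)$ for each $d \in \Der(W(f))$, and deduce $\gr(d) = \lambda \ad_{e_N}$ for some $\lambda \in \kk$, $N \in \ZZ$. Iterating as in Proposition \ref{prop:derivations into W} yields $d = \ad_{g\del}$ with $g = h/f$ for some $h \in \kk[t,t^{-1}]$. Finally, Lemma \ref{lem:order of vanishing} applies verbatim to show $\ord_\xi(g) \geq 1$ for every $\xi \in V(f)$; combined with the fact that $g$ can only have poles at roots of $f$, this gives $g\del \in W(\rad(f))$. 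The dimension formula then follows from $H^1(W(f);W(f)) \cong W(\rad(f))/W(f)$ (using that $W(f)$ has trivial center) together with $\dim(W(\rad(f))/W(f)) = \deg f - \deg\rad(f)$. The one step that is not purely cosmetic is the boundedness of $\deg(d)$ (the analogue of Lemma \ref{lem:degree of derivation is bounded}), which requires $W(f)$ to be finitely generated as a Lie algebra; this can be verified using the bracket formula $[t^af\del, t^bf\del] = (b-a)t^{a+b-1}f^2\del$ together with explicit generators that bridge the positive- and negative-degree sides of $W(f)$.
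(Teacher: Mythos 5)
Your proposal follows the paper's proof essentially verbatim: compute $\Der(W) = \Inn(W)$ via Farnsteiner's theorem (the paper's Theorem \ref{thm:Farnsteiner}), transfer the degree/$d$-compatibility/associated-graded machinery of Section \ref{sec:general submodule-subalgebras} using that $\gr(W(f)) = W$, obtain $d = \ad_{h/f\,\del}$ with $h \in \kk[t,t^{-1}]$, and finish with Lemma \ref{lem:order of vanishing}, noting that poles at $0$ cause no trouble since $f(0) \neq 0$. This is exactly the argument the paper gives (including the remark that finite generation of $W(f)$ is what makes $\deg(d)$ well-defined), so the proposal is correct and takes the same route.
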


We firstly note that the notions of degree and $d$-compatibility are still well-defined for derivations of submodule-subalgebras of $W$. In other words, for $d \in \Der(W(f),W)$, we define
$$\deg(d) = \max\{\deg(d(x)) - \deg(x) \mid x \in W(f)\}.$$
Since $W(f)$ is a finitely generated Lie algebra, $\deg(d)$ is well-defined by a similar proof to Lemma \ref{lem:degree of derivation is bounded}. Similarly, for $d$-compatibility we simply note that an analogous result to Lemma \ref{lem:d-compatible} holds in this situation.

One key difference between submodule-subalgebras of $W$ and of $\W$ is that the associated graded algebra of $W(f)$ is $W$. Therefore, in order to compute the associated graded derivation of $d \in \Der(W(f),W)$, we only need to know about $\Der(W)$, rather than $\Der(W_{\geq n},W)$ as was the case in Section \ref{sec:general submodule-subalgebras}, resulting in a much simpler picture.

The computation of $\Der(W)$ is similar to that of $\Der(\W)$ in Proposition \ref{prop:derivations of W1}: we simply apply Theorem \ref{thm:Farnsteiner}.

\begin{prop}
    We have $\Der(W) = \Inn(W)$. \qed
\end{prop}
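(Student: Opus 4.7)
The plan is to mimic the proof of Proposition \ref{prop:derivations of W1} essentially verbatim, applying Farnsteiner's Theorem \ref{thm:Farnsteiner} with $L = M = W$ to reduce to the degree-$0$ piece, and then showing $\Der(W)_0 \subseteq \Inn(W)$ by a direct analysis on the basis $\{e_n \mid n \in \ZZ\}$.

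First I would verify that the hypotheses of Theorem \ref{thm:Farnsteiner} hold. Since $W = \bigoplus_{n \in \ZZ} \kk e_n$ is $\ZZ$-graded and finitely generated (it is generated by $e_{-1}, e_1, e_2$), it suffices to check (i) $H^1(\kk e_0; \kk e_n) = 0$ for all $n \in \ZZ \nonzero$, and (ii) $\Hom_{\kk e_0}(\kk e_n, \kk e_m) = 0$ for $n \neq m$. For (i), $\Inn(\kk e_0, \kk e_n) = \kk \ad_{e_n} \subseteq \Der(\kk e_0, \kk e_n) \subseteq \Hom_\kk(\kk e_0, \kk e_n)$, and a dimension count forces $\Der = \Inn$ since the ambient $\Hom$ space is one-dimensional. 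For (ii), $e_0$ acts on $\kk e_n$ with eigenvalue $n$, so distinct weights give no $\kk e_0$-equivariant maps. This is identical to the one-sided case.

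Next I would show $\Der(W)_0 = \kk \ad_{e_0}$, following Lemma \ref{lem:derivations of degree 0} but now with $n$ ranging over all of $\ZZ$. If $d \in \Der(W)_0$, write $d(e_n) = \lambda_n e_n$ with $\lambda_n \in \kk$. Comparing the two expressions for $d([e_n,e_m])$ gives $\lambda_n + \lambda_m = \lambda_{n+m}$ whenever $n \neq m$. The chain $\lambda_0 + \lambda_1 = \lambda_1$ forces $\lambda_0 = 0$, then $\lambda_{-1} + \lambda_1 = \lambda_0 = 0$ gives $\lambda_{-1} = -\lambda_1$, and the identity $2\lambda_1 + \lambda_3 = \lambda_1 + \lambda_4 = \lambda_5 = \lambda_2 + \lambda_3$ yields $\lambda_2 = 2\lambda_1$; an induction in both directions then gives $\lambda_n = n\lambda_1$ for all $n \in \ZZ$. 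Hence $d = \lambda_1 \ad_{e_0} \in \Inn(W)$.

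Finally, Theorem \ref{thm:Farnsteiner} gives $\Der(W) = \Der(W)_0 + \Inn(W) = \Inn(W)$, and the reverse inclusion is trivial. There is no real obstacle to this argument; the only point worth noting is that the doubly-infinite index set makes the degree-$0$ analysis even cleaner than in the one-sided case, since one may use brackets $[e_n,e_{-n+k}]$ freely. In particular, one does \emph{not} need to carry out the higher-degree case analysis (as in Propositions \ref{prop:W+} and \ref{prop:derivations of Wn}), because here $e_0 \in W$ lies in the Lie algebra itself, which is precisely what allows Farnsteiner's theorem to dispose of all nonzero degrees at once.
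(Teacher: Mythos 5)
Your argument is essentially the paper's: the paper gives no separate proof of this proposition, noting only that one applies Theorem \ref{thm:Farnsteiner} exactly as in Proposition \ref{prop:derivations of W1}, and your verification of the two hypotheses together with the degree-zero computation (the functional equation $\lambda_n + \lambda_m = \lambda_{n+m}$ for $n \neq m$, forcing $\lambda_n = n\lambda_1$ and hence $d = \lambda_1\ad_{e_0}$) is exactly the intended route. One correction: $\{e_{-1}, e_1, e_2\}$ does \emph{not} generate $W$, since any bracket of elements of index $\geq -1$ again has index $\geq -1$ (the only pair of indices summing to $-2$ is $(-1,-1)$, whose bracket vanishes), so $e_{-2}, e_{-3}, \ldots$ are never reached; however $W$ is still finitely generated, for instance by $\{e_{-2}, e_{-1}, e_1, e_2\}$, so the finite-generation hypothesis of Theorem \ref{thm:Farnsteiner} holds and the rest of your argument goes through unchanged.
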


This has been known for a long time. For example, a proof can be found in \cite{IkedaKawamoto}, where the authors compute derivations of a class of Lie algebras known as \emph{generalised Witt algebras}.

Now we can define associated graded derivations similarly to Section \ref{sec:general submodule-subalgebras}. The proof is identical to Proposition \ref{prop:associated graded derivation}.

\begin{prop}\label{prop:Witt associated graded derivation}
    Let $d \in \Der(W(f),W)$ and define a $\kk$-linear map $\gr(d) \colon W \to W$ by
    $$\gr(d)(e_k) = \begin{cases}
        \LT(d(x)), &\text{where } x \in W(f) \text{ and }\LT(x) = e_k, \text{ if } k \text{ is } d\text{-compatible,} \\
        0, &\text{if } k \text{ is not } d\text{-compatible,}
    \end{cases}$$
    for $k \in \ZZ$. Then $\gr(d) = \lambda \ad_{e_k} \in \Der(W)$ for some $\lambda \in \kk$ and $k \in \ZZ$. \qed
\end{prop}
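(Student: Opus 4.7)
The plan is to mimic the proof of Proposition \ref{prop:associated graded derivation} almost verbatim, exploiting the fact that the full Witt algebra $W$ has a cleaner derivation theory than the graded subalgebras $W_{\geq n}$. First I would establish the analogue of Lemma \ref{lem:d-compatible} for submodule-subalgebras of $W$: if $x, y \in W(f)$ satisfy $\deg(x) = \deg(y) = k$ and $\LT(x) = \LT(y) = e_k$, then $\deg(x - y) < k$, so the maximality of $\deg(d)$ forces $\deg(d(x - y)) < k + \deg(d)$, which gives $\LT(d(x)) = \LT(d(y))$. This ensures $\gr(d)$ is well-defined.

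Next I would verify that $\gr(d) \colon W \to W$ is a derivation via the same case-by-case analysis as in Proposition \ref{prop:associated graded derivation}. Setting $N = \deg(d)$ and fixing distinct $k, \ell \in \ZZ$, choose $u, v \in W(f)$ with $\LT(u) = e_k$, $\LT(v) = e_\ell$ and apply the Leibniz rule to $d([u, v])$. Split into Case 1, where $k + N = \ell$ or $\ell + N = k$ (one of $[d(u), v], [u, d(v)]$ drops below degree $k + \ell + N$), and Case 2, where neither equality holds and both terms live in the top degree. In each case one checks both that $k + \ell$'s $d$-compatibility matches with whether $\LT([u, d(v)]) + \LT([d(u), v])$ vanishes, and that the leading term identity reproduces the Leibniz rule for $\gr(d)$ on $e_k, e_\ell$. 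This argument is word-for-word the same as in the proof of Proposition \ref{prop:associated graded derivation}.

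Finally, by construction $\gr(d)(e_k)$ lies in degree $k + N$ whenever nonzero, so $\gr(d) \in \Der(W, W)_N$ is a graded derivation of degree $N$. Applying the identity $\Der(W) = \Inn(W)$ stated just above the proposition, we have $\gr(d) = \ad_w$ for some $w \in W$. Writing $w = \sum_i \mu_i e_i$ and using the direct sum decomposition $\Der(W) = \bigoplus_n \Der(W)_n$ of Proposition \ref{prop:decomposition of Der}, the degree-$N$ component of $\ad_w$ is $\mu_N \ad_{e_N}$, and since $\gr(d)$ is purely of degree $N$ this forces $\gr(d) = \mu_N \ad_{e_N}$, giving the claimed form with $\lambda = \mu_N$ and $k = N$.

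The only step that carries any nontrivial content is the case analysis verifying that $\gr(d)$ is a derivation, and specifically the self-consistency of the ``set to zero when not $d$-compatible'' convention with the Leibniz rule; but this is precisely what was handled in Proposition \ref{prop:associated graded derivation}, and nothing about that argument uses anything particular to $W_{\geq n}$ over $W$. Everything else is a direct invocation of the derivation computation for $W$ and the graded decomposition of $\Der(W)$.
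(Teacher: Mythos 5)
Your proposal is correct and follows essentially the same route as the paper, which simply asserts that the proof is identical to that of Proposition \ref{prop:associated graded derivation}: well-definedness via the analogue of Lemma \ref{lem:d-compatible}, the same two-case Leibniz verification that $\gr(d)$ is a derivation, and then the conclusion from $\Der(W) = \Inn(W)$ together with the fact that $\gr(d)$ is graded of pure degree $N = \deg(d)$ (and $W$ is centerless, so the graded components of $\ad_w$ isolate $\mu_N \ad_{e_N}$). Your closing observation that this case only needs $\Der(W) = \Inn(W)$ rather than the full computation of $\Der(W_{\geq n}, W)$ is exactly the simplification the paper points out in the surrounding text.
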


The next result is analogous to Proposition \ref{prop:derivations into W}.

\begin{prop}\label{prop:Witt derivation rational function}
    Let $d \in \Der(W(f),W)$. Then $d = \ad_{\frac{h}{f}\del}$ for some $h \in \kk[t,t^{-1}]$. \qed
\end{prop}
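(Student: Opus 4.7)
The plan is to follow the template of the proof of Proposition \ref{prop:derivations into W}, substituting the Witt-algebra associated graded construction (Proposition \ref{prop:Witt associated graded derivation}) for its one-sided counterpart. Starting from $d_0 = d$, we use Proposition \ref{prop:Witt associated graded derivation} to write $\gr(d_0) = \lambda_0 \ad_{e_{k_0}}$ for some $\lambda_0 \in \kk$ and $k_0 \in \ZZ$, and set $d_1 = d_0 - \lambda_0 \ad_{e_{k_0}} \in \Der(W(f),W)$, which has strictly smaller degree than $d_0$. Iterating this procedure yields a sequence of derivations $d_0, d_1, d_2, \ldots$ with $\deg(d_0) > \deg(d_1) > \ldots$, together with scalars $\lambda_i \in \kk$ and integers $k_0 > k_1 > \ldots$ (tending to $-\infty$ if the process does not terminate), satisfying
$$d = d_{m+1} + \sum_{i=0}^m \lambda_i \ad_{e_{k_i}}$$
for every $m \in \NN$, with the convention $\lambda_i = 0$ once $d_i = 0$.

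Next, I assemble the formal series $g = \sum_{i=0}^\infty \lambda_i t^{k_i+1}$, which is a well-defined element of $\kk((t^{-1}))$ because the exponents $k_i + 1$ strictly decrease. Viewing $\ad_{g\del}$ as a derivation of the Lie algebra $\kk((t^{-1}))\del$, the partial-sum identity above shows that for each $x \in W(f)$, the difference $d(x) - [g_m\del,x]$ equals $d_{m+1}(x)$, whose degree tends to $-\infty$ with $m$; combined with $g_m \to g$ in $\kk((t^{-1}))$, this gives $d = \restr{\ad_{g\del}}{W(f)}$. In particular, $[g\del,x] \in W$ for every $x \in W(f)$.

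Finally, to extract the conclusion that $g = h/f$ for some $h \in \kk[t,t^{-1}]$, I apply the containment $[g\del, x] \in W$ to the two test elements $x = f\del$ and $x = tf\del$, both of which lie in $W(f)$. This yields $gf' - g'f \in \kk[t,t^{-1}]$ and $gf + t(gf' - g'f) \in \kk[t,t^{-1}]$; subtracting $t$ times the former from the latter gives $gf \in \kk[t,t^{-1}]$, and setting $h = gf$ produces the required expression. The main potential subtlety is justifying the formal-series convergence in $\kk((t^{-1}))$ rigorously, but this works exactly as in Proposition \ref{prop:derivations into W}, so no genuine obstacle is anticipated; the only feature specific to the two-sided setting is that $W(f)$ also contains elements such as $t^{-1}f\del$, which would serve equally well as test elements (and in fact make it clear that no one-sided limitation is at play).
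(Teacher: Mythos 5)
Your proposal is correct and is essentially the paper's intended argument: the paper omits the proof of this proposition precisely because it is the verbatim adaptation of the proof of Proposition \ref{prop:derivations into W}, replacing the one-sided associated graded construction with Proposition \ref{prop:Witt associated graded derivation}, and your iteration, formal series in $\kk((t^{-1}))$, and the two test elements $f\del$ and $tf\del$ match that template exactly. No gaps.
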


We can now prove Theorem \ref{thm:Witt submodule-subalgebra derivations}.

\begin{proof}[Proof of Theorem \ref{thm:Witt submodule-subalgebra derivations}]
    The proof follows in a similar way to Theorem \ref{thm:derivations of W(f)}. The only difference is that in $W$ we allow poles at 0, so we need to be a bit more careful. Note that the proof of Theorem \ref{thm:derivations of W(f)} only considers $\ord_\xi$ for $\xi \in V(f)$. Since we assumed that $f(0) \neq 0$, we have $0 \not\in V(f)$, so we do not run into any problems by allowing poles at 0.
\end{proof}

\subsection{Extensions}

We proceed similarly to Section \ref{sec:extensions} to compute one-dimensional extensions of submodule-subalgebras of the Witt algebra. Just like submodule-subalgebras of $\W$, one-dimensional extensions of $W(f)$ are either contained in $W(\rad(f))$ or they come from non-split extensions of $W/W(t - \xi)$ by $W(f)$ for $\xi \in V(f)$ with $\ord_\xi(f) = 1$. We explain the proof of this result, with emphasis on the difference between this situation and the proof of Theorem \ref{thm:one-dimensional extension}.

\begin{thm}\label{thm:Witt extensions}
    Let $f \in \kk[t]$ with $f(0) \neq 0$. If $M \cong W/W(t - \xi)$ for some $\xi \in V(f)$ with $\ord_\xi(f) = 1$, then
    $$\dim(\Ext_{U(W(f))}^1(W/W(t - \xi),W(f))) = 1.$$
    In this case, the unique non-split extension of $W/W(t - \xi)$ by $W(f)$ is $W(\frac{f}{t - \xi})$. If $M \cong \kk$ is trivial, we have
    $$\dim(\Ext_{U(W(f))}^1(\kk,W(f))) = \deg(f) - \deg(\rad(f)).$$
    Otherwise, $\Ext_{U(W(f))}^1(M,W(f)) = 0$.
\end{thm}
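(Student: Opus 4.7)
The plan is to follow the three-stage argument of Theorem \ref{thm:one-dimensional extension}, checking that each step transfers cleanly to the Witt setting under the assumption $f(0) \neq 0$. The key preliminary observation is the Witt analog of Lemma \ref{lem:derived subalgebra}: the identity $[af\del, bf\del] = (ab' - a'b)f^2\del$ for $a, b \in \kk[t,t^{-1}]$ gives $[W(f), W(f)] = W(f^2)$, so any one-dimensional $U(W(f))$-module $M$ is trivial over $W(f^2)$. Consequently a non-split short exact sequence $0 \to W(f) \to \overline{L} \to M \to 0$ of $U(W(f))$-modules restricts to an element of $\Der(W(f^2), W(f))/\Inn(W(f^2), W(f))$, and in the case $M \cong \kk$ the conclusion is immediate from Theorem \ref{thm:Witt submodule-subalgebra derivations}, which identifies $\Ext^1_{U(W(f))}(\kk, W(f))$ with $W(\rad(f))/W(f)$ and hence gives dimension $\deg(f) - \deg(\rad(f))$.

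The main obstacle is the Witt analog of Proposition \ref{prop:extensions are in W}: one must show that every one-dimensional extension $\overline{L}$ of $W(f)$ embeds in $W$ over $W(f)$. I would prove this by copying the proof of that proposition. Write $\overline{L} = W(f) \oplus \kk x$; since $W(f) \subseteq W(\rad(f))$, the map $\ad_x$ restricts to a derivation $W(f^2) \to W(\rad(f))$, and by Proposition \ref{prop:Witt derivation rational function} it is implemented on $W(f^2)$ by $\ad_{g\del}$ for some $g \in \kk(t)$. The Jacobi-identity calculation from the original proof, combined with the fact that centralizers of nonzero elements of $\kk(t)\del$ are one-dimensional, forces $\overline{L} = W(f) \oplus \kk g\del$ inside $\kk(t)\del$. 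The delicate step is the order-of-vanishing argument at each root $\xi \in V(f)$, which forces $\ord_\xi(g) \geq 0$. Here the hypothesis $f(0) \neq 0$ is critical: all roots of $f$ lie in $\kk^*$, so the combinatorial obstruction to poles of $g$ at roots of $f$ survives intact, while possible poles of $g$ at $0$ are harmless in $W$. The conclusion is that $g \in \kk[t,t^{-1}]$, so $\overline{L} \subseteq W$.

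Once the embedding of $\overline{L}$ into $W$ is secured, the rest of the argument parallels the end of the proof of Theorem \ref{thm:one-dimensional extension}. Writing $\overline{L} = W(f) \oplus \kk g\del$ with $g \in \kk[t,t^{-1}]$, either $g\del \in W(\rad(f))$ (the trivial-module case handled above), or there exists $\xi \in V(f)$ with $g(\xi) \neq 0$. Computing $\ord_\xi([f\del, g\del])$ via Lemma \ref{lem:order of vanishing} forces $\ord_\xi(f) = 1$. Evaluating the relations $[t^k f\del, g\del] \equiv \lambda_k g\del \pmod{W(f)}$ at $\xi$ pins down $\lambda_k = -\xi^k f'(\xi)$ and shows that such a $\xi$ is unique, so $g(\xi') = 0$ for every other $\xi' \in V(f)$. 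A final application of Lemma \ref{lem:order of vanishing} at each remaining $\mu \in V(f)$ with $\ord_\mu(f) \geq 2$ forces $\ord_\mu(g) \geq \ord_\mu(f)$, so that $g\del \in W(\tfrac{f}{t-\xi})$ and hence $\overline{L} = W(\tfrac{f}{t-\xi})$, yielding the unique non-split extension in this case.
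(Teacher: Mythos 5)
Your proposal is correct and follows essentially the same route as the paper: establish $[W(f),W(f)] = W(f^2)$, prove the Witt analogue of Proposition \ref{prop:extensions are in W} by rerunning the order-of-vanishing argument (noting that $f(0) \neq 0$ keeps all roots of $f$ away from $0$, where poles are harmless in $W$), handle $M \cong \kk$ via Theorem \ref{thm:Witt submodule-subalgebra derivations}, and then repeat the endgame of Theorem \ref{thm:one-dimensional extension} to identify the nontrivial extensions as $W(\frac{f}{t-\xi})$. This matches the paper's proof, which explicitly reduces to the one-sided case by observing that the argument never considers orders of vanishing at $0$.
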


As in Section \ref{sec:extensions}, we need to compute the derived subalgebra of $W(f)$ for $f \in \kk[t]$. Just like in Lemma \ref{lem:derived subalgebra}, we can easily see that $[W(f),W(f)] = W(f^2)$.

\begin{lem}
    For $f \in \kk[t,t^{-1}]$, the derived subalgebra of $W(f)$ is $W(f^2)$. \qed
\end{lem}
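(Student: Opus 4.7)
The plan is to adapt the one-line proof of Lemma \ref{lem:derived subalgebra} to this setting, verifying both inclusions from the same bracket computation, the only new ingredient being that negative powers of $t$ are now at our disposal.

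For the inclusion $[W(f),W(f)] \subseteq W(f^2)$, I would apply the Leibniz rule: for $g,h \in \kk[t,t^{-1}]$,
$$[gf\del,\, hf\del] = \bigl((gf)(hf)' - (gf)'(hf)\bigr)\del = (gh' - g'h)f^2\del,$$
since the cross-terms $ghff'$ cancel. The right-hand side visibly lies in $W(f^2)$.

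For the reverse inclusion $W(f^2) \subseteq [W(f),W(f)]$, it suffices to exhibit every monomial $t^n f^2\del$ with $n \in \ZZ$ as a bracket of elements of $W(f)$. Choosing $a,b \in \ZZ$ with $a \neq b$ and $a+b = n+1$, the identity above specialises to
$$[t^a f\del,\, t^b f\del] = (b-a)\, t^n f^2\del,$$
a nonzero scalar multiple of $t^n f^2\del$. Such a pair $(a,b)$ always exists: take $(0,n+1)$ when $n \neq -1$, and $(1,-1)$ when $n = -1$. Since the monomials $t^n f^2\del$ span $W(f^2)$, this gives the reverse inclusion.

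There is no real obstacle here; the argument is a direct computation with no conceptual difficulty, presentable in essentially one line as in Lemma \ref{lem:derived subalgebra}. The only point worth flagging is the case $n = -1$, which is handled using $t$ and $t^{-1}$; that $t^{-1}$ is available in the Witt algebra (but not in the one-sided Witt algebra) is precisely what makes the surjectivity onto $W(f^2)$ uniform in $n$, and in some respects simpler than its one-sided counterpart.
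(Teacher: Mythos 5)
Your proof is correct and follows essentially the same route as the paper, which simply invokes the bracket identity $[gf\del,hf\del] = (gh'-g'h)f^2\del$ (as in Lemma \ref{lem:derived subalgebra}) and leaves the rest implicit. Your explicit verification of the reverse inclusion via the monomials $t^n f^2\del$, including the case $n=-1$, is a careful spelling-out of what the paper takes as immediate.
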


We now need to show that one-dimensional extensions of submodule-subalgebras of $W$ are contained in $W$. The proof of this is nearly identical to Proposition \ref{prop:extensions are in W}.

\begin{prop}\label{prop:Witt extensions are in W}
    Let $L$ be a Lie subalgebra of $W$ of finite codimension, and let $\overline{L}$ be a one-dimensional extension of $L$. Then $\overline{L}$ can be uniquely embedded in $W$ such that the diagram
    \begin{center}
        \begin{tikzcd}
            L \arrow[d, hook] \arrow[r, hook] & W \\
            \overline{L} \arrow[ru, hook]     &   
        \end{tikzcd}
    \end{center}
    commutes. \qed
\end{prop}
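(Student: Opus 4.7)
The plan is to follow the proof of Proposition \ref{prop:extensions are in W} essentially verbatim, substituting $W$ for $\W$ throughout and using Proposition \ref{prop:Witt derivation rational function} in place of Proposition \ref{prop:derivations into W}. First, the analogue of Proposition \ref{prop:Alexey} for $W$ (provable by the same argument) gives $W(f) \subseteq L \subseteq W(\rad(f))$ for some $f \in \kk[t]$ with $f(0) \neq 0$. Writing $\overline{L} = L \oplus \kk x$ as a vector space and using that the one-dimensional module $\overline{L}/L$ is killed by $[L,L] \supseteq W(f^2)$, we obtain that $\ad_x$ restricts to an element of $\Der(W(f^2), W(\rad(f)))$. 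Proposition \ref{prop:Witt derivation rational function} then yields some $g \in \kk(t)$ such that $\ad_x = \ad_{g\del}$ on $W(f^2)$.

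Next, I would run the Jacobi-identity/centralizer argument exactly as in Proposition \ref{prop:extensions are in W}: for $u \in L$ and $v \in W(f^2)$ one obtains $[v, [u,x]] = [u, [v, g\del]] - [[u,v], g\del] = [v, [u, g\del]]$, so setting $y = x - g\del$ forces $[v, [u,y]] = 0$ for all such $u, v$. Since the centralizer of any nonzero element of $\kk(t)\del$ is one-dimensional, this yields $[u,y] = \lambda(u) y$ for some scalar $\lambda(u)$, producing a splitting of the induced extension when pushed forward into $\kk(t)\del$. Non-splitness of the original sequence then forces $g\del \notin L$, so that $\overline{L}$ may be canonically identified with $L \oplus \kk g\del \subseteq \kk(t)\del$.

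The final step is to show that $g \in \kk[t,t^{-1}]$, so that $\overline{L} \subseteq W$. Since $g = h/f^2$ for some $h \in \kk[t,t^{-1}]$, the potential poles of $g$ lie only at roots of $f$. Because $f(0) \neq 0$, all such roots lie in $\kk^*$, so I would repeat the order-of-vanishing argument from the final part of the proof of Proposition \ref{prop:extensions are in W}, applying Lemma \ref{lem:order of vanishing} at each $\xi \in V(f) \subseteq \kk^*$ to conclude $\ord_\xi(g) \geq 0$. In fact, this step is slightly simpler than for $\W$, since a pole at $0$ would now be allowed. Uniqueness of the embedding follows as in Proposition \ref{prop:extensions are in W}: the element $g\del$ is determined modulo $L$ by the restriction of $\ad_x$ to $W(f^2)$.

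I do not anticipate any genuine obstacle in carrying out this translation; the only care needed is tracking the hypothesis $f(0) \neq 0$, which, if anything, simplifies the argument compared to the one-sided case by removing $0$ from the set of problematic points.
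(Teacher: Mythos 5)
Your proposal is correct and follows exactly the route the paper takes: the paper's own ``proof'' is precisely the observation that the argument of Proposition \ref{prop:extensions are in W} carries over verbatim once one notes that $f(0) \neq 0$, so that no orders of vanishing at $0$ ever arise. Your tracking of where Proposition \ref{prop:Witt derivation rational function} replaces Proposition \ref{prop:derivations into W}, and your remark that allowing poles at $0$ only simplifies matters, match the paper's discussion.
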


The only subtlety in the proof of Proposition \ref{prop:Witt extensions are in W} is that, when applying \cite[Proposition 3.2.7]{PetukhovSierra} to deduce that there exists $f \in \kk[t]$ such that
$$W(f) \subseteq L \subseteq W(\rad(f)),$$
we need to assume that $f(0) \neq 0$. After this, the proof is exactly the same as Proposition \ref{prop:extensions are in W}, since we never consider any orders of vanishing at 0. The proof of Theorem \ref{thm:Witt extensions} now follows similarly to Theorem \ref{thm:one-dimensional extension}.

As we did with $\W$, we summarise the computation of one-dimensional extensions of submodule-subalgebras of $W$.

\begin{cor}
    Suppose $\overline{L}$ is a one-dimensional extension of $W(f)$, where $f \in \kk[t]$ and $f(0) \neq 0$. Then either $\overline{L} \subseteq W(\rad(f))$ or $\overline{L} = W(\frac{f}{t - \xi})$, where $\xi \in V(f)$ with $\ord_\xi(f) = 1$. \qed
\end{cor}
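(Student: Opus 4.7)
The plan is to deduce this corollary directly from Theorem \ref{thm:Witt extensions} combined with Proposition \ref{prop:Witt extensions are in W}, by a case analysis on the isomorphism type of the quotient module $M = \overline{L}/W(f)$. First I would invoke Proposition \ref{prop:Witt extensions are in W} to reduce to the situation where $\overline{L}$ sits inside $W$ with $W(f) \subseteq \overline{L} \subseteq W$. Then $M$ is a one-dimensional $U(W(f))$-module, and Theorem \ref{thm:Witt extensions} restricts $M$ to just two isomorphism classes: the trivial representation $\kk$, or $W/W(t-\xi)$ for some $\xi \in V(f)$ with $\ord_\xi(f) = 1$, since these are the only $M$ for which $\Ext_{U(W(f))}^1(M,W(f))$ is nonzero.

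If $M \cong W/W(t-\xi)$, then Theorem \ref{thm:Witt extensions} identifies the unique non-split extension as $W(\tfrac{f}{t-\xi})$, so $\overline{L} = W(\tfrac{f}{t-\xi})$, which is the second alternative. If $M$ is trivial, then $W(f)$ is an ideal of $\overline{L}$, so every $w \in \overline{L}$ satisfies $[w, W(f)] \subseteq W(f)$; in other words $\overline{L} \subseteq N_W(W(f))$. The remaining step is to identify this normaliser with $W(\rad(f))$. This falls out of Theorem \ref{thm:Witt submodule-subalgebra derivations}: for any $w \in N_W(W(f))$, the restriction $\ad_w|_{W(f)}$ is a derivation of $W(f)$, hence equals $\ad_{w'}|_{W(f)}$ for some $w' \in W(\rad(f))$; then $w - w'$ centralises $W(f)$ inside $W$. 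Because centralisers in $\kk(t)\del$ are one-dimensional while $W(f)$ contains $\kk$-linearly independent elements, the only such element is $0$, so $w = w' \in W(\rad(f))$.

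I expect no real obstacle here: the corollary is essentially a repackaging of Theorem \ref{thm:Witt extensions}. The only point that is not an immediate citation is the normaliser identification $N_W(W(f)) = W(\rad(f))$, and even that is a short centraliser argument on top of the derivation theorem. Once both cases are handled, the dichotomy stated in the corollary follows immediately.
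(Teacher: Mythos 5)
Your proposal is correct and follows essentially the same route as the paper, which presents this corollary as an immediate consequence of Theorem \ref{thm:Witt extensions} (after the reduction to $\overline{L} \subseteq W$ via Proposition \ref{prop:Witt extensions are in W}). The only step you add beyond direct citation, the identification $N_W(W(f)) = W(\rad(f))$ via the derivation theorem and the one-dimensionality of centralisers in $\kk(t)\del$, is sound and matches how the paper handles the trivial-module case through Theorem \ref{thm:Witt submodule-subalgebra derivations}.
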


\subsection{Isomorphisms between subalgebras of the Witt algebra of finite codimension}

As we did in Section \ref{sec:isomorphism}, we explain how any isomorphism between subalgebras of $W$ of finite codimension extends to an automorphism of $W$, analogously to Theorem \ref{thm:extending isomorphisms}.

\begin{thm}\label{thm:Witt isomorphism}
    Let $L_1$ and $L_2$ be subalgebras of $W$ of finite codimension and suppose there is an isomorphism of Lie algebras $\varphi \colon L_1 \to L_2$. Then $\varphi$ extends to an automorphism of $W$.
\end{thm}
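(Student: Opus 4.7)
The plan is to adapt the proof of Theorem \ref{thm:extending isomorphisms} to the two-sided setting, following the same inductive strategy. Given an isomorphism $\varphi \colon L_1 \to L_2$ between finite-codimension subalgebras of $W$, I would induct on $\codim_W(L_1)$. The base case $L_1 = W$ is tautological, since then $\varphi$ is already an automorphism. For the inductive step, I would produce a one-dimensional extension $L_1 \subsetneq \overline{L}_1 \subseteq W$ and then compose $\varphi^{-1} \colon L_2 \to L_1$ with the inclusion $L_1 \hookrightarrow \overline{L}_1$ to obtain a one-dimensional extension of $L_2$. By Proposition \ref{prop:Witt extensions are in W} this extension embeds uniquely into $W$ as some $\overline{L}_2 \supseteq L_2$, yielding an isomorphism $\overline{\varphi} \colon \overline{L}_1 \to \overline{L}_2$ which restricts to $\varphi$ on $L_1$. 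Since $\codim_W(\overline{L}_1) = \codim_W(L_1) - 1$, the inductive hypothesis produces an automorphism of $W$ extending $\overline{\varphi}$, hence $\varphi$.

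The only missing input is the analogue of Proposition \ref{prop:one-dim extensions exist} for $W$, namely that every proper finite-codimension subalgebra $L \subseteq W$ admits a one-dimensional extension inside $W$. Its proof would proceed in three steps. First, $L$ is strongly residually solvable: by the Witt-algebra analogue of Proposition \ref{prop:Alexey} (used implicitly after Proposition \ref{prop:Witt extensions are in W}), one has $W(f) \subseteq L \subseteq W(\rad(f))$ for some monic $f \in \kk[t]$ with $f(0) \neq 0$; the orders-of-vanishing argument from Lemma \ref{lem:L/I is solvable} then shows that any finite-codimension ideal $I$ of $L$ forces $\rad(g) = \rad(f)$ for the generator $g$ of the corresponding submodule-subalgebra inside $I$, and hence contains some $D^m(L) \subseteq W(\rad(f)^{2^m})$. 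Second, Lie's theorem converts strong residual solvability into the statement that every finite-dimensional irreducible representation of $L$ is one-dimensional, exactly as in Corollary \ref{cor:irreps are one-dim}. Third, one picks a one-dimensional $U(L)$-submodule $M \subseteq W/L$ and sets $\overline{L} = \pi^{-1}(M)$, where $\pi \colon W \to W/L$ is the quotient; non-splitness of $0 \to L \to \overline{L} \to M \to 0$ will follow provided $W$ itself has no one-dimensional $U(L)$-submodule.

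The main obstacle is verifying this last non-existence statement, which in the one-sided case was invoked implicitly but requires some care in $W$ since Laurent polynomials allow poles at $0$. Concretely, a one-dimensional $U(L)$-submodule $\kk g\del \subseteq W$ would force $u h - u' \in \kk$ for every $u \in \kk[t,t^{-1}]$ with $u\del \in L$, where $h \coloneqq g'/g \in \kk(t)$. Testing against $u = t^n f$ for varying $n \in \ZZ$ (so that $u\del \in W(f) \subseteq L$) gives $h = n t^{-1} + f'/f + \lambda_n/(t^n f)$ for constants $\lambda_n \in \kk$, and comparing these expressions for different $n$ yields the desired contradiction. With this in hand, the inductive argument above completes the proof.
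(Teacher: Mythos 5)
Your proposal is correct and takes essentially the same route as the paper: the paper deduces Theorem \ref{thm:Witt isomorphism} by the same induction on codimension used for Theorem \ref{thm:extending isomorphisms}, relying on Proposition \ref{prop:Witt extensions are in W} together with the $W$-analogues of Lemma \ref{lem:L/I is solvable}, Corollary \ref{cor:irreps are one-dim}, and Proposition \ref{prop:one-dim extensions exist}, whose proofs it declares identical to the one-sided case. Your explicit check that $W$ has no one-dimensional $U(L)$-submodule (comparing $h = g'/g$ against $u = t^n f$ for varying $n$), which the paper leaves implicit, is correct and addresses the one point where the two-sided setting genuinely requires a new verification.
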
 

The proof of Theorem \ref{thm:Witt isomorphism} follows by showing that any subalgebra of $W$ of finite codimension has a one-dimensional extension contained in $W$. As we did with $\W$, we can achieve this by showing that any such subalgebra of $W$ is strongly residually solvable. The proof is exactly the same as Lemma \ref{lem:L/I is solvable}.

\begin{lem}
    If $L$ is a proper subalgebra of $W$ of finite codimension then $L$ is strongly residually solvable. \qed
\end{lem}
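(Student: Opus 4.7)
The plan is to port the proof of Lemma \ref{lem:L/I is solvable} almost verbatim, paying attention only to the mild subtleties that arise from working with $\kk[t,t^{-1}]$ rather than $\kk[t]$. Let $I$ be an ideal of $L$ of finite codimension in $L$. By the Witt-algebra version of Proposition \ref{prop:Alexey} (used in the proof of Proposition \ref{prop:Witt extensions are in W}), there exist monic polynomials $f, g \in \kk[t]$ with $f(0), g(0) \neq 0$ such that
$$W(f) \subseteq L \subseteq W(\rad(f)), \qquad W(g) \subseteq I \subseteq W(\rad(g)).$$
In particular every root of $f$ and of $g$ lies in $\kk^*$, which will allow us to apply Lemma \ref{lem:order of vanishing} at such roots without any issue about poles or vanishing at $0$.

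For residual solvability, observe that $W(\rad(f)) \supseteq W(\rad(f)^2) \supseteq W(\rad(f)^4) \supseteq \cdots$ is the derived series of $W(\rad(f))$, and its intersection is $0$ because $\deg(\rad(f)^{2^k}) \to \infty$. Since $L \subseteq W(\rad(f))$, the intersection of the derived series of $L$ is also trivial.

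It remains to show $L/I$ is solvable. From $W(g) \subseteq I \subseteq L \subseteq W(\rad(f))$ we conclude that $\rad(f)$ divides $\rad(g)$. Suppose for contradiction that $\rad(g) \neq \rad(f)$, and pick $\xi \in V(g) \setminus V(f)$; note $\xi \neq 0$ since $g(0) \neq 0$. Choose $w \in I$ nonzero with $\ord_\xi(w)$ minimal (this exists, since $\ord_\xi(w) \geq 0$ whenever $w \in W$ with $\xi \neq 0$, and $W(g) \subseteq I$ forces $\ord_\xi$ on $I$ to be bounded below by $\ord_\xi(g) \geq 1$). Then $f\del \in L$, so $[f\del, w] \in I$; since $\ord_\xi(f) = 0$ and $\xi \neq 0$, Lemma \ref{lem:order of vanishing} gives
$$\ord_\xi([f\del,w]) = \ord_\xi(f) + \ord_\xi(w) - 1 = \ord_\xi(w) - 1,$$
contradicting minimality of $\ord_\xi(w)$. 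Hence $\rad(g) = \rad(f)$, so $g \mid \rad(f)^n$ for some $n \in \NN$, giving $W(\rad(f)^n) \subseteq W(g) \subseteq I$. Since $L \subseteq W(\rad(f))$, induction on $k$ shows $D^k(L) \subseteq W(\rad(f)^{2^k})$; picking $m$ with $2^m \geq n$ yields $D^m(L) \subseteq I$, so $L/I$ is solvable.

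The main obstacle, which is really just a bookkeeping point rather than a mathematical one, is to verify that the one place where the $\W$-proof used special features of $\kk[t]$ (namely the orders of vanishing calculation) goes through in $W$. This is what forces the choice $f(0), g(0) \neq 0$: it keeps all relevant $\xi$ in $\kk^*$ so that Lemma \ref{lem:order of vanishing} applies without modification and the derived series descent is unaffected by the possibility of poles at $0$.
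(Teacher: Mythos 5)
Your proof is correct and follows the same route as the paper, which simply declares the argument of Lemma \ref{lem:L/I is solvable} to carry over verbatim once one notes that $f(0), g(0) \neq 0$ keeps all relevant roots away from $0$. One tiny slip: it is $I \subseteq W(\rad(g))$ (not $W(g) \subseteq I$) that gives $\ord_\xi(w) \geq 1$ for $w \in I$ and $\xi \in V(g)$, but this does not affect the argument since the existence of the minimum already follows from $\ord_\xi \geq 0$ on $W \setminus \{0\}$ for $\xi \neq 0$.
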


Applying Lie's theorem, we deduce that all finite-dimensional irreducible representations of $L$ are one-dimensional, just like Corollary \ref{cor:irreps are one-dim}.

\begin{cor}
    Let $L$ be a subalgebra of $W$ of finite codimension. Then all finite-dimensional irreducible representations of $L$ are one-dimensional. \qed
\end{cor}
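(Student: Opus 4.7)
The plan is to adapt the proof of Corollary \ref{cor:irreps are one-dim} essentially verbatim to the Witt algebra setting, since the preceding lemma supplies the one nontrivial ingredient. Given a finite-dimensional irreducible representation $M$ of $L$, I would consider the representation homomorphism $\rho \colon L \to \mf{gl}(M)$ defined by $\rho_w(m) = w \cdot m$. Its kernel is the annihilator ideal $\Ann(M) = \{w \in L \mid w \cdot M = 0\}$. Since $\mf{gl}(M)$ has dimension $\dim(M)^2$, the ideal $\Ann(M)$ has codimension at most $\dim(M)^2 < \infty$ in $L$.

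Next I would invoke the preceding lemma, which asserts that every proper subalgebra of $W$ of finite codimension is strongly residually solvable. By the definition of strong residual solvability, every finite-dimensional quotient of $L$ is solvable, so in particular the finite-dimensional quotient $L/\Ann(M)$ is a solvable Lie algebra.

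Finally, $M$ descends to a finite-dimensional irreducible representation of the solvable Lie algebra $L/\Ann(M)$ over the algebraically closed field $\kk$ of characteristic $0$. Lie's theorem then forces $\dim(M) = 1$, which is the desired conclusion.

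There is no real obstacle at this stage: the entire substance of the argument has been absorbed into the strong residual solvability lemma, whose proof in turn relies on the Witt-algebra analogue of Proposition \ref{prop:Alexey} (squeezing ideals between $W(g) \subseteq I \subseteq W(\rad(g))$) together with the order-of-vanishing calculation of Lemma \ref{lem:order of vanishing} to show $\rad(f) = \rad(g)$, and thence $D^m(L) \subseteq W(g) \subseteq I$ for $m$ large. Once that lemma is in hand, the corollary is immediate and no new ideas are required beyond transcribing the $\W$-argument.
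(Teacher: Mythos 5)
Your proof is correct and follows exactly the route the paper intends: the paper omits the proof precisely because it is the verbatim transcription of Corollary \ref{cor:irreps are one-dim} (annihilator ideal of finite codimension, solvability of the quotient via strong residual solvability, then Lie's theorem), which is what you have written. No further comment is needed.
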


We can now show that any subalgebra of $W$ of finite codimension has a one-dimensional extension. The proof is identical to that of Proposition \ref{prop:one-dim extensions exist}.

\begin{prop}\label{prop:one-dimesional extensions exist}
    Let $L$ be a proper subalgebra of $W$ of finite codimension. Then $L$ has a one-dimensional extension $\overline{L}$ such that the short exact sequence of $U(L)$-modules
    $$0 \to L \to \overline{L} \to \overline{L}/L \to 0$$
    is non-split. Furthermore, by Proposition \ref{prop:Witt extensions are in W}, all such extensions can be uniquely embedded in $W$ such that the diagram \begin{center}
        \begin{tikzcd}
            L \arrow[d, hook] \arrow[r, hook] & W \\
            \overline{L} \arrow[ru, hook]     &   
        \end{tikzcd}
    \end{center}
    commutes. \qed
\end{prop}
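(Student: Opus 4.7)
The plan is to mirror the argument of Proposition \ref{prop:one-dim extensions exist}, using the tools already set up for $W$: namely, Proposition \ref{prop:Witt extensions are in W} for the uniqueness/embedding statement, and the just-established fact that every finite-dimensional irreducible representation of a finite-codimension subalgebra of $W$ is one-dimensional.

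First I would split into two cases according to $\dim(W/L)$. If $\dim(W/L) = 1$, then $W$ itself is the candidate $\overline{L}$, and the main thing to check is that the short exact sequence $0 \to L \to W \to W/L \to 0$ is non-split. This is where I would use the fact that $W$ admits no nonzero one-dimensional $U(L)$-submodule: if $\kk x \subseteq W$ were such a submodule, then $[y,x] \in \kk x$ for every $y \in L$; but since $L$ has finite codimension in $W$, $L$ contains elements of arbitrarily large degree, and comparing leading terms in $W$ forces $x = 0$.

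If $\dim(W/L) \geq 2$, I would pick any nonzero irreducible $U(L)$-subrepresentation of the finite-dimensional module $W/L$. By the corollary preceding the proposition (the Witt analogue of Corollary \ref{cor:irreps are one-dim}, obtained from the strong residual solvability of $L$ and Lie's theorem), any such irreducible submodule is one-dimensional; call it $M$. Taking $\overline{L} := \pi^{-1}(M)$, where $\pi \colon W \to W/L$ is the quotient map, produces a subalgebra with $L \subseteq \overline{L} \subseteq W$ and $\dim(\overline{L}/L) = 1$. Non-splitness of the sequence $0 \to L \to \overline{L} \to M \to 0$ again follows from the absence of one-dimensional $U(L)$-submodules in $W$, applied to $\overline{L}$ in place of $W$, since a splitting would exhibit such a submodule.

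The final statement about embedding into $W$ is exactly the content of Proposition \ref{prop:Witt extensions are in W} applied to each $\overline{L}$. The only step that requires any genuine care is the verification that $W$ contains no one-dimensional $U(L)$-submodule; once this is in hand, the rest of the argument is a direct transcription of the $\W$ case, and there are no new obstacles arising from the two-sided nature of $W$ (since no order-of-vanishing considerations at $0$ enter the argument).
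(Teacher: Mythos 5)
Your proposal is correct and follows essentially the same route as the paper, which simply declares the proof identical to that of Proposition \ref{prop:one-dim extensions exist}: the same case split on $\dim(W/L)$, the same use of the Witt analogue of Corollary \ref{cor:irreps are one-dim} to find a one-dimensional $U(L)$-submodule of $W/L$, non-splitness from the absence of one-dimensional $U(L)$-submodules of $W$, and Proposition \ref{prop:Witt extensions are in W} for the embedding. Your leading-term verification that $W$ has no one-dimensional $U(L)$-submodule is a correct filling-in of a detail the paper leaves implicit.
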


Automorphisms of $W$ are different to those of $\W$: in $W$, we no longer have access to the map $e_n \mapsto e_n(x)$ for $x \in \kk \nonzero$, since $(t - x)^n\del \not\in W$ for $n < 0$. Furthermore, in $\kk[t,t^{-1}]$ we have an automorphism $t \mapsto t^{-1}$, which induces an automorphism of $W$.

\begin{ntt}
    For $\alpha \in \kk^*$. Define linear maps $\sigma_\alpha, \tau \colon W \to W$ by
    $$\sigma_\alpha(e_n) = \alpha^n e_n, \quad \tau(e_n) = -e_{-n},$$
    for $n \in \ZZ$.
\end{ntt}

Similarly to $\W$, these give rise to all automorphisms of $W$.

\begin{prop}[{\cite[Theorem 1.1]{BavulaWitt}}]
    We have $\Aut(W) = \{\sigma_\alpha \mid \alpha \in \kk^*\} \rtimes \{\tau^{\pm 1}\}$.
\end{prop}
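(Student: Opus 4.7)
The plan is to exploit the distinguished role played by $e_0 = t\del$ under the adjoint action, reducing the classification of $\Aut(W)$ to a classification of elements $x \in W$ whose adjoint action mimics that of $e_0$.

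Call an element $x \in W$ a \emph{standard element} if $\ad_x$ is diagonalizable on $W$ with spectrum exactly $\ZZ$ and each eigenspace one-dimensional. The element $e_0$ is standard, with weight spaces $\kk e_n$, since $[e_0, e_n] = n e_n$. Any automorphism $\varphi \in \Aut(W)$ must carry standard elements to standard elements, because both diagonalizability and the set of eigenvalues (together with the dimensions of eigenspaces) are preserved under conjugation by an automorphism.

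The heart of the argument is to classify all standard elements, and show that they are exactly $\pm e_0$. Given a candidate $x = \sum_{n=p}^{q} a_n e_n$ with $a_p, a_q \neq 0$, I would analyze the eigenvalue equation $\ad_x(v) = \lambda v$ by writing $v = \sum v_n e_n$ and expanding using $[e_n, e_m] = (m-n) e_{n+m}$. This yields a finite linear recurrence relating the coefficients $v_n$, whose solutions must have finite support for $v$ to lie in $W$. A careful analysis of when such solutions exist for \emph{every} integer eigenvalue, and when the resulting eigenspaces are one-dimensional, forces $p = q = 0$; the further requirement that the spectrum be $\ZZ$ (rather than any dilated copy $\alpha \ZZ$) then forces $a_0 = \pm 1$. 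Given this classification, for any $\varphi \in \Aut(W)$ we have $\varphi(e_0) = \pm e_0$, and since $\tau(e_0) = -e_0$, after possibly replacing $\varphi$ by $\varphi \circ \tau$ we may assume $\varphi(e_0) = e_0$.

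With $\varphi(e_0) = e_0$, the map $\varphi$ commutes with $\ad_{e_0}$, hence preserves each weight space $\kk e_n$. Thus $\varphi(e_n) = \lambda_n e_n$ for some $\lambda_n \in \kk^*$. Applying $\varphi$ to the relation $[e_n, e_m] = (m-n) e_{n+m}$ for $n \neq m$ yields $\lambda_n \lambda_m = \lambda_{n+m}$. Setting $\lambda = \lambda_1$, an easy induction (using, e.g., the relations $\lambda_2 \lambda_{-1} = \lambda_1$ and $\lambda_{n+1} = \lambda_n \lambda_1 / \lambda_n \cdot \lambda_1$ obtained from distinct index pairs) gives $\lambda_n = \lambda^n$ for all $n \in \ZZ$, and hence $\varphi = \sigma_\lambda$. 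Combined with the possible factor of $\tau$, this shows $\Aut(W) = \{\sigma_\alpha \mid \alpha \in \kk^*\} \rtimes \{\tau^{\pm 1}\}$.

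The main obstacle is the classification of standard elements: the computation with the eigenvalue recurrence is delicate because one must simultaneously rule out (i) failure of diagonalizability (appearance of generalized eigenspaces), (ii) eigenspaces of dimension $> 1$, and (iii) spectra that are proper sub- or super-lattices of $\ZZ$. An alternative route would be to note that for a standard $x$ the centralizer $\ker(\ad_x)$ is $\kk x$, so $\kk x$ is a Cartan subalgebra of $W$, and then classify Cartan subalgebras directly; but this reformulation is essentially equivalent, and one still needs the same recurrence analysis to conclude.
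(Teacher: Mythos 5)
The paper offers no proof of this proposition: it is imported verbatim from \cite[Theorem 1.1]{BavulaWitt}, so there is no internal argument to compare yours against. Your strategy is a legitimate self-contained route, and the reduction steps are correct: an automorphism $\varphi$ satisfies $\ad_{\varphi(x)} = \varphi \circ \ad_x \circ \varphi^{-1}$, so it preserves the class of elements whose adjoint action is diagonalizable with spectrum $\ZZ$ and one-dimensional eigenspaces; and once $\varphi(e_0) = \pm e_0$ is known, composing with $\tau$ if necessary and using $\lambda_n\lambda_m = \lambda_{n+m}$ for $n \neq m$ (with $\lambda_0 = 1$, $\lambda_{-1} = \lambda_1^{-1}$, and $\lambda_2 = \lambda_1/\lambda_{-1} = \lambda_1^2$ to start the induction) does yield $\varphi = \sigma_\alpha$ or $\varphi = \sigma_\alpha \circ \tau$; the relation $\tau\sigma_\alpha\tau = \sigma_{\alpha^{-1}}$ then gives the semidirect product structure.

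The one substantive gap is that the classification of your ``standard elements'' as $\{\pm e_0\}$ --- which is the entire content of the theorem --- is asserted rather than carried out. The claim is true, but the coefficient recurrence you propose is an awkward way to see it. A cleaner execution: $g\del$ is a $\lambda$-eigenvector of $\ad_{f\del}$ if and only if $fg' - (f'+\lambda)g = 0$, i.e. $g'/g = (f'+\lambda)/f$, and since this first-order linear equation has a one-dimensional solution space, eigenspaces are automatically at most one-dimensional and the only question is when the solution is a Laurent polynomial. Write $f = ct^k\prod_i(t-\xi_i)^{m_i}$ with $\xi_i \neq 0$ and compare partial fractions of $(f'+\lambda)/f = f'/f + \lambda/f$ with $g'/g = \sum_\mu \ord_\mu(g)(t-\mu)^{-1}$: a repeated root $\xi_i$ forces $\lambda = 0$, since $\lambda/f$ would contribute a pole of order $m_i \geq 2$ while $g'/g$ has only simple poles; a simple root $\xi_i$ forces $1 + \lambda/f'(\xi_i) \in \NN$ for every $\lambda$ in the spectrum, which is impossible for spectrum $\ZZ$ (test $\lambda = \pm 1, \pm 2$); and $f = ct^k$ with $k \neq 1$ gives $g = t^k\exp\bigl(\tfrac{\lambda}{c(1-k)}t^{1-k}\bigr)$, hence only the eigenvalue $0$. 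So $f = ct$, the spectrum is $c\ZZ$, and $c = \pm 1$. With this computation supplied, your proof is complete.
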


This allows us to determine when two submodule-subalgebras of the Witt algebra are isomorphic. It is worth noting that, unlike submodule-subalgebras of $\W$, we have $W(f) = W(t^nf)$ for $f \in \kk[t,t^{-1}]$ and $n \in \ZZ$.

\begin{cor}\label{cor:Witt isomorphism classes}
    Let $f,g \in \kk[t]$ such that $f(0) \neq 0$ and $g(0) \neq 0$ and suppose $W(f) \cong W(g)$ as Lie algebras. Then one of the following is true:
    \begin{enumerate}
        \item There exist $\alpha, \beta \in \kk^*$ such that $f(\alpha t) = \beta g(t)$;
        \item There exist $\alpha, \beta \in \kk^*$ and $n \in \NN$ such that $t^n f(\alpha t^{-1}) = \beta g(t)$. \qed
    \end{enumerate}
\end{cor}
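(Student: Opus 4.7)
The approach is a direct application of Theorem \ref{thm:Witt isomorphism} combined with the preceding classification of $\Aut(W)$, in close analogy with Corollary \ref{cor:classification} for $\W$. Any isomorphism $\varphi \colon W(f) \to W(g)$ extends, by Theorem \ref{thm:Witt isomorphism}, to an automorphism $\Phi$ of $W$. Since $\Aut(W) = \{\sigma_\alpha \mid \alpha \in \kk^*\} \rtimes \{\tau^{\pm 1}\}$ and $\tau^2 = \id$, we have either $\Phi = \sigma_\alpha$ or $\Phi = \sigma_\alpha \circ \tau$ for some $\alpha \in \kk^*$.

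The plan is then to compute how each of these automorphisms acts on the generator of $W(f)$. A routine calculation from the definitions of $\sigma_\alpha$ and $\tau$ gives $\sigma_\alpha(h(t)\del) = \alpha^{-1} h(\alpha t)\del$ and $\tau(h(t)\del) = -t^2 h(t^{-1})\del$ for every $h \in \kk[t,t^{-1}]$. Consequently $\sigma_\alpha(W(f)) = W(f(\alpha t))$, and, after absorbing the unit $t^{\deg(f) - 2} \in \kk[t,t^{-1}]^*$, also $\tau(W(f)) = W(t^{\deg(f)} f(t^{-1}))$; composing yields $(\sigma_\alpha \circ \tau)(W(f)) = W(t^{\deg(f)} f(\alpha^{-1} t^{-1}))$ up to a unit.

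To finish, in the case $\Phi = \sigma_\alpha$ the identity $W(f(\alpha t)) = W(g)$ of principal ideals in $\kk[t,t^{-1}]\del$, combined with the normalisation $f(0), g(0) \neq 0$, forces $f(\alpha t) = \beta g(t)$ for some $\beta \in \kk^*$, which is statement (1). The case $\Phi = \sigma_\alpha \circ \tau$ gives statement (2) analogously, with $n = \deg(f)$ after relabelling $\alpha^{-1}$ as $\alpha$. The only real subtlety—and the main bookkeeping obstacle—is that $W(h)$ depends on $h$ only up to multiplication by a unit $\gamma t^k \in \kk[t,t^{-1}]^*$; the assumption that $f, g$ are genuine polynomials with nonzero constant term is precisely what is needed to fix the generators up to scalar and to force the exponent $n$ in the second case to lie in $\NN$ (indeed, to equal $\deg(f)$).
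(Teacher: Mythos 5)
Your proposal is correct and follows exactly the route the paper intends: the corollary is stated with a \qed precisely because it is meant to follow immediately from Theorem \ref{thm:Witt isomorphism} together with the description of $\Aut(W)$, in the same way that Corollary \ref{cor:classification} follows for $\W$. Your explicit computations of $\sigma_\alpha(h\del)$ and $\tau(h\del)$, and the observation that the normalisation $f(0), g(0) \neq 0$ pins down the generator of the ideal up to a scalar (forcing $n = \deg(f)$ in case (2)), correctly fill in the bookkeeping the paper leaves to the reader.
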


\subsection{Universal property}

We proceed similarly to Section \ref{sec:universal} to prove that the Witt algebra satisfies a similar universal property to $\W$. We easily see that $W$ is a completely non-split extension of any of its subalgebras of finite codimension by applying Proposition \ref{prop:one-dimesional extensions exist} inductively.

\begin{cor}
    Let $L$ be a subalgebra of $W$ of finite codimension. Then $W$ is a completely non-split extension of $L$. \qed
\end{cor}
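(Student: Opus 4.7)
The argument mirrors Corollary \ref{cor:completely non-split extension} for the one-sided Witt algebra. The plan is to build the required chain
$$L = L_0 \subseteq L_1 \subseteq L_2 \subseteq \ldots \subseteq L_n = W$$
by repeatedly applying Proposition \ref{prop:one-dimesional extensions exist} inside $W$, with each inclusion $L_i \subseteq L_{i+1}$ a one-dimensional non-split extension of $U(L_i)$-modules.

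Set $L_0 = L$. Suppose inductively that $L_i \subseteq W$ has been constructed as a finite codimension subalgebra of $W$ containing $L$, and that the previous step was a one-dimensional non-split extension. If $L_i = W$ we stop. Otherwise $L_i$ is a proper subalgebra of $W$ of finite codimension, so Proposition \ref{prop:one-dimesional extensions exist} produces a one-dimensional extension $L_{i+1} \subseteq W$ of $L_i$ for which the short exact sequence of $U(L_i)$-modules
$$0 \to L_i \to L_{i+1} \to L_{i+1}/L_i \to 0$$
is non-split, and $L_{i+1}/L_i$ is automatically a (one-dimensional, hence) finite-dimensional irreducible $L_i$-representation. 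Since $L_{i+1} \subseteq W$ and $L_i \subseteq L_{i+1}$, the new algebra $L_{i+1}$ again has finite codimension in $W$, so the inductive hypothesis is preserved.

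Each step strictly increases the dimension of $L_i$ inside $W$ by one, so after exactly $n = \codim_W(L)$ steps we arrive at $L_n = W$. The resulting chain witnesses $W$ as a completely non-split extension of $L$ in the sense of Definition \ref{dfn:completely non-split extension}. There is no real obstacle here: the substantive work has already been done in Proposition \ref{prop:one-dimesional extensions exist}, and the only thing to verify is that the inductive hypothesis (finite codimension in $W$) propagates, which is immediate from the sandwich $L \subseteq L_i \subseteq W$.
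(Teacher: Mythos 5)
Your proof is correct and follows exactly the route the paper intends: the paper states this corollary as an immediate consequence of applying Proposition \ref{prop:one-dimesional extensions exist} inductively, which is precisely the chain construction you carry out. No issues.
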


Just like $\W$, the Witt algebra can be reconstructed from any of its subalgebras of finite codimension as the universal completely non-split extension.

\begin{thm}\label{thm:universal property W}
    Let $L$ be a subalgebra of $\W$ of finite codimension. Then $\W$ is the universal completely non-split extension of $L$, in the following sense: if $\overline{L}$ is another completely non-split extension of $L$, then $\overline{L}$ can be uniquely embedded in $\W$ such that the diagram
    \begin{center}
        \begin{tikzcd}
            L \arrow[d, hook] \arrow[r, hook] & \W \\
            \overline{L} \arrow[ru, hook]     &   
        \end{tikzcd}
    \end{center}
    commutes.
\end{thm}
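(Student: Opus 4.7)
The plan is to mirror the proof of Theorem \ref{thm:universal property}, substituting the Witt algebra $W$ for the one-sided Witt algebra $\W$ throughout and invoking the $W$-analogues from this section in place of their $\W$-counterparts. (The displayed statement appears to have several $\W$'s where $W$ was intended, inherited as a copy from Theorem \ref{thm:universal property}.) The corollary immediately preceding already establishes that $W$ is itself a completely non-split extension of any finite-codimension subalgebra of itself, so the work lies in proving the universal property.

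First I would reduce to a sequence of one-dimensional extensions. Given a completely non-split extension
$$L_0 = L \subseteq L_1 \subseteq \dots \subseteq L_n = \overline{L},$$
each irreducible quotient $L_{i+1}/L_i$ is one-dimensional by the corollary stating that finite-dimensional irreducible representations of finite-codimension subalgebras of $W$ are one-dimensional (applied inductively, as each $L_i$ will be realised as a finite-codimension subalgebra of $W$ during the construction). Then I apply Proposition \ref{prop:Witt extensions are in W} inductively: at step $i$, $L_i$ sits inside $W$ extending the original embedding of $L$, and the proposition provides a unique embedding $L_{i+1} \hookrightarrow W$ extending $L_i \hookrightarrow W$. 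Composing these yields the desired injective homomorphism $\varphi\colon \overline{L} \hookrightarrow W$ making the triangle commute.

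For uniqueness, given two such embeddings $\varphi, \psi\colon \overline{L} \hookrightarrow W$, the composition $\rho = \psi \circ \varphi^{-1}$ is an isomorphism between two finite-codimension subalgebras of $W$ that restricts to the identity on $L$. By Theorem \ref{thm:Witt isomorphism}, $\rho$ extends to an automorphism $\widetilde{\rho} \in \Aut(W) = \{\sigma_\alpha \mid \alpha \in \kk^*\} \rtimes \{\tau^{\pm 1}\}$. The main obstacle, and the only point where the argument genuinely differs from that of Theorem \ref{thm:universal property}, is to rule out the nontrivial automorphisms that restrict to $\id_L$. If $\widetilde{\rho} = \sigma_\alpha$, then since $L$ contains $W(f)$ for some $f$ and therefore contains elements whose leading terms are $e_k$ for all sufficiently large $k$, fixing these elements forces $\alpha^k = 1$ for all large $k$, hence $\alpha = 1$. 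If $\widetilde{\rho} = \sigma_\alpha \circ \tau$, the degree-reversing action precludes fixing any nonzero element of $W(f)$ of large enough degree, which is a contradiction. Hence $\widetilde{\rho} = \id_W$, giving $\psi = \varphi$ and completing the argument.
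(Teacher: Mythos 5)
Your proposal is correct and follows essentially the same route as the paper, which likewise proves the theorem by inductively applying Proposition \ref{prop:Witt extensions are in W} (using the corollary that finite-dimensional irreducible representations are one-dimensional to reduce to one-dimensional extensions) and deduces uniqueness from Theorem \ref{thm:Witt isomorphism}. The only difference is that you spell out the final step the paper leaves implicit, namely checking that no nontrivial element of $\Aut(W) = \{\sigma_\alpha\} \rtimes \{\tau^{\pm 1}\}$ restricts to the identity on a finite-codimension subalgebra; your argument for this is sound, and you are also right that the $\W$'s in the theorem statement are typos for $W$.
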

\begin{proof}
    The proof follows by an inductive application of Proposition \ref{prop:Witt extensions are in W}, similarly to the proof of Theorem \ref{thm:universal property}. As before, uniqueness of the embedding $\overline{L} \hookrightarrow W$ follows from Theorem \ref{thm:Witt isomorphism}.
\end{proof}

We remark that we do not have a classification of subalgebras of $W$ of infinite codimension, so we do not have any analogous results to Section \ref{sec:infinite codimension} for the Witt algebra. This is the subject of ongoing research.

\end{document}